\documentclass[12pt]{amsart}%
\usepackage{graphicx}
\usepackage[small,nohug,heads=littlevee]{diagrams}
\usepackage{amscd}
\usepackage{geometry}
\usepackage{amsmath}
\usepackage{amsfonts}
\usepackage{amssymb}%
\setcounter{MaxMatrixCols}{30}
\providecommand{\U}[1]{\protect\rule{.1in}{.1in}}
\newtheorem{theorem}{Theorem}[section]
\theoremstyle{plain}

\newtheorem{corollary}[theorem]{Corollary}

\newtheorem{lemma}[theorem]{Lemma}

\newtheorem{proposition}[theorem]{Proposition}

\theoremstyle{definition}
\newtheorem{definition}[theorem]{Definition}
\newtheorem{remark}{Remark}
\newtheorem{example}{Example}
\numberwithin{equation}{section}

\setlength{\textheight} {8.5in}
\setlength{\textwidth} {6.0in}
\setlength{\topmargin} {0.0in}
\setlength{\evensidemargin} {0.25in}
\setlength{\oddsidemargin} {0.25in}
\begin{document}
\title[Proper homotopy types and $\mathcal{Z}$-boundaries]{Proper homotopy types and $\mathcal{Z}$-boundaries of spaces admitting
geometric group actions}
\author{Craig R. Guilbault}
\address{Department of Mathematical Sciences\\
University of Wisconsin-Milwaukee, Milwaukee, WI 53201}
\email{craigg@uwm.edu}
\author{Molly A. Moran}
\address{Department of Mathematics, The Colorado College, Colorado Springs, Colorado 80903}
\email{molly.moran@coloradocollege.edu}
\thanks{This research was supported in part by Simons Foundation Grants 207264 and
427244, CRG}
\date{January 3, 2018}
\keywords{Absolute neighborhood retract, absolute retract, geometric action, proper
homotopy equivalence, Z-structure, Z-boundary, shape equivalence}

\begin{abstract}
We extend several techniques and theorems from geometric group theory so they
apply to geometric actions on arbitrary proper metric ARs (absolute retracts).
Previous versions often required actions on CW complexes, manifolds, or proper
CAT(0) spaces, or else included a finite-dimensionality hypothesis. We remove
those requirements, providing proofs that simultaneously cover all of the
usual variety of spaces. A second way that we generalize earlier results is by
eliminating a freeness requirement often placed on the group actions. In doing
so, we allow for groups with torsion.

The main theorems are new in that they generalize results found in the
literature, but a significant aim is expository. Toward that end, brief but
reasonably comprehensive introductions to the theories of ANRs (absolute
neighborhood retracts) and $\mathcal{Z}$-sets are included, as well as a much
shorter short introduction to shape theory. Here is a sampling of the theorems
proved here.\medskip

\noindent\textbf{Theorem. }\emph{If quasi-isometric groups }$G$\emph{ and }%
$H$\emph{ act geometrically on proper metric ARs }$X$\emph{ and }$Y$\emph{,
resp., then }$X$\emph{ is proper homotopy equivalent to }$Y$\emph{.\smallskip}

\noindent\textbf{Theorem. }\emph{If quasi-isometric groups }$G$\emph{ and }%
$H$\emph{ act geometrically on proper metric ARs }$X$\emph{ and }$Y$\emph{,
resp., and }$Y$\emph{ can be compactified to a }$\mathcal{Z}$\emph{-structure
}$\left(  \overline{Y},Z\right)  $\emph{ for }$H$\emph{, then the same
boundary can be added to }$X$\emph{ to obtain a }$\mathcal{Z}$\emph{-structure
for }$G$\emph{.\smallskip\smallskip}

\noindent\noindent\textbf{Theorem. }\emph{If quasi-isometric groups }$G$\emph{
and }$H$\emph{ admit }$\mathcal{Z}$\emph{-structures }$\left(  \overline
{X},Z_{1}\right)  $\emph{ and }$\left(  \overline{Y},Z_{2}\right)  $\emph{,
resp., then }$Z_{1}$\emph{ and }$Z_{2}$\emph{ are shape equivalent.}\medskip

\end{abstract}
\maketitle

\section{Introduction\label{Section: Introduction}}

In this paper all spaces are assumed separable and metrizable. A metric space
$\left(  X,d\right)  $ is \emph{proper} if every closed ball in $X$ is
compact. Separability is automatic for proper metric spaces and every proper
metric space is locally compact; conversely, every locally compact space
admits a proper metric.

A locally compact space $X$ is an \emph{absolute neighborhood retract} (ANR)
if, whenever $X$ is embedded as a closed subset of a space $Y$, some
neighborhood of $X$ in $Y$ retracts onto $X$. An ANR $X$ is an \emph{absolute
retract} (AR) if, whenever $X$ is embedded as a closed subset of $Y$, its
image is a retract of $Y$.\footnote{Due to the applications presented here,
local compactness is included as part of the definition of ANR. More general
treatments are commonly found in the literature.}

One of the most significant aspects of \textquotedblleft ANR
theory\textquotedblright\ is that it provides a common ground for studying a
variety of nice spaces: manifolds; locally finite CW (including simplicial and
cube) complexes; proper CAT(0) spaces; Hilbert cube manifolds; etc. This is
particularly useful in subjects where it is desirable to move freely between
categories. In this paper we will generalize several theorems and techniques
from geometric group theory and metric geometry, previously restricted to one
or more of these categories, to the full spectrum of ANRs. A second way that
we improve upon known results is by extending several theorems that previously
applied only to free geometric actions. In our versions, freeness is not
required, so the theorems can be applied to groups with torsion.

For the reader with little or no experience working with abstract ANRs, we
include a short, elementary introduction to ANR theory that is sufficient for
a complete understanding of most of the work presented here. Indeed, a
secondary goal of this paper is to provide the reader that background, and to
provide a level of comfort with this useful category of spaces.\medskip

Here are the most notable theorems to be proved here. Versions of the first
theorem and its corollary are well-known when $X$ and $Y$ are CW complexes
(see \cite[Ch.10]{Geo08}). The traditional proof is inductive over
skeleta---sometimes called a \textquotedblleft connect-the-dots
strategy.\textquotedblright\ Ours relies on a generalized version of that
method, which does not require a CW structure.

\begin{theorem}
\label{Theorem: q-i groups act on phe spaces}If quasi-isometric groups $G$ and
$H$ act geometrically on proper metric ARs $X$ and $Y$, resp., then $X$ is
proper homotopy equivalent to $Y$. In fact, there exist continuous coarse
equivalences $f:X\rightarrow Y$ and $g:Y\rightarrow X$ such that $gf$ and $fg$
are boundedly (hence properly) homotopic to $\operatorname*{id}_{X}$ and
$\operatorname*{id}_{Y}$.
\end{theorem}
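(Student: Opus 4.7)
The plan is to start from the group-level quasi-isometry, translate it into a map between orbits in $X$ and $Y$, promote that orbit map to a continuous coarse equivalence using the AR property of the target in place of CW skeleton extension, and finally produce the bounded homotopies by the same kind of controlled AR extension applied to a homotopy on $X\times I$.

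Step 1 (orbit map). Fix basepoints $x_{0}\in X$ and $y_{0}\in Y$. By the Milnor--\v{S}varc lemma, the orbit maps $G\rightarrow Gx_{0}$ and $H\rightarrow Hy_{0}$ are quasi-isometries. Given a quasi-isometry $\alpha\colon G\rightarrow H$ and a choice of representative $g$ for each point $gx_{0}$ in the orbit (coset-representatives for the stabilizer of $x_{0}$, which is finite by properness plus cocompactness), define $f_{0}\colon Gx_{0}\rightarrow Y$ by $f_{0}(gx_{0})=\alpha(g)y_{0}$. This is well defined up to a bounded error coming from the finite stabilizers, and it is a quasi-isometric embedding whose image is coarsely dense.

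Step 2 (continuous extension --- the main obstacle). Extend $f_{0}$ to a continuous $f\colon X\rightarrow Y$ while keeping the coarse control uniform. This is the step where the classical CW skeleton induction must be replaced. The idea is to use cocompactness to build a $G$-invariant filtration $Gx_{0}=A_{0}\subset A_{1}\subset A_{2}\subset\cdots$ of $X$ by closed sets such that each difference $A_{n}\setminus A_{n-1}$ is a locally finite union of $G$-translates of finitely many pieces of uniformly bounded diameter. Since $Y$ is an AR and the image of the $A_{0}$-data on the frontier of each piece lies in a bounded region of $Y$, a Dugundji-type extension theorem lets us extend $f$ across each piece into a bounded region; carrying this out equivariantly (or independently on a set of orbit representatives and then pushing around by $G$) yields the continuous extension with uniformly bounded distortion. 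The output $f$ is automatically a continuous coarse equivalence because it is a bounded perturbation of the orbit-level quasi-isometry.

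Step 3 (reverse map). Apply the same construction with a coarse quasi-inverse $\beta\colon H\rightarrow G$ of $\alpha$ to produce a continuous coarse equivalence $g\colon Y\rightarrow X$.

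Step 4 (bounded homotopies). On the orbit $Gx_{0}$, $gf$ differs from $\operatorname{id}_{X}$ by the distance $d_{X}(gx_{0},\beta\alpha(g)x_{0})$, which is uniformly bounded because $\beta\alpha$ is boundedly close to $\operatorname{id}_{G}$. By continuity and the bounded-diameter control from Step 2, the resulting self-map $gf\colon X\rightarrow X$ moves every point a bounded distance. Now consider the map $h\colon X\times\{0,1\}\rightarrow X$ given by $gf$ on $X\times\{0\}$ and $\operatorname{id}_{X}$ on $X\times\{1\}$; this is continuous on the closed subset $X\times\{0,1\}$ of the proper metric AR $X\times I$, and its image on matched points is boundedly close in the AR $X$. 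Exactly the same controlled extension technique as in Step 2, applied to $X\times I$ with the induced diagonal $G$-action, produces a continuous extension $H\colon X\times I\rightarrow X$ whose tracks have uniformly bounded length; this is the required bounded (hence proper) homotopy from $gf$ to $\operatorname{id}_{X}$. The argument for $fg$ is symmetric.

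The hard part is Step 2: without a CW structure we cannot literally extend skeleton by skeleton, so we must show that a geometric action on a proper metric AR provides enough structure --- a filtration by cocompact closed sets with bounded-diameter pieces --- that the AR extension principle can be iterated with uniform quantitative control. Once that controlled extension mechanism is in place, the same mechanism is reused verbatim in Step 4 to manufacture the bounded homotopies.
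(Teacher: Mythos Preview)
Your overall architecture is right and matches the paper's: \v{S}varc--Milnor gives a coarse equivalence $X\to Y$, one approximates it by a continuous map, does the same in reverse, and then manufactures the bounded homotopies by the same approximation trick applied to $X\times[0,1]$. The gap is exactly where you say it is, in Step~2, and your outline does not yet contain the idea that closes it.

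Two specific issues. First, your filtration $A_0\subset A_1\subset A_2\subset\cdots$ is never constructed, and your ellipsis suggests infinitely many stages. That is fatal: the bounded error necessarily grows at each extension stage, so you need \emph{finitely} many stages. The paper's device is to observe that a geometric action forces $X$ to have \emph{finite macroscopic dimension}: a uniformly bounded open cover of finite order $n+1$. Passing to the nerve and pulling back closed stars of barycenters from the second barycentric subdivision yields exactly $n+1$ locally discrete families $\mathcal{A}^0,\dots,\mathcal{A}^n$ of uniformly bounded closed sets covering $X$; the filtration is then $C_i=E\cup\bigcup_{j\le i}\bigcup\mathcal{A}^j$, and the induction terminates after $n+1$ steps. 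Second, ``Dugundji-type extension'' is not the right tool: Dugundji gives no control on where the extension lands. What you need is that $Y$ is \emph{uniformly contractible} (again a consequence of the geometric action): the already-defined map sends $A\cap C_{i-1}$ into a ball of controlled radius, that ball contracts inside a larger ball of controlled radius, and hence the null-homotopic map extends over $A$ with image in that larger ball. This is what keeps the error bounded stage by stage.

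Finally, your ``push around by $G$'' idea for equivariance is unnecessary and would not work as stated, since $f$ is not $G$-equivariant (there is no $G$-action on $Y$). The paper's argument is purely metric after extracting finite macroscopic dimension and uniform contractibility from the group actions; the groups never reappear in the extension step.
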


The following corollary can be obtained via covering space theory and a deep
result from ANR theory (see Theorem \ref{Theorem: West's Theorem}), when the
$G$-actions are free, and for arbitrary CAT(0) groups by a theorem of Ontaneda
\cite{Ont05}. Our proof is more elementary and the conclusion is more general.

\begin{corollary}
\label{Corollary: Cor to q-i groups act on phe spaces}If a group $G$ acts
geometrically on proper metric ARs $X$ and $Y$, then $X$ is proper homotopy
equivalent to $Y$ via continuous coarse equivalences $f:X\rightarrow Y$ and
$g:Y\rightarrow X$ such that $gf$ and $fg$ are boundedly homotopic to
$\operatorname*{id}_{X}$ and $\operatorname*{id}_{Y}$.
\end{corollary}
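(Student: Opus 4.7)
The plan is to deduce the corollary immediately from Theorem~\ref{Theorem: q-i groups act on phe spaces} by specializing to $H=G$. Since any finitely generated group is (trivially) quasi-isometric to itself via the identity map $\operatorname{id}_G\colon G\to G$, the two geometric actions of $G$ on the proper metric ARs $X$ and $Y$ fit exactly into the hypothesis of the theorem with $H:=G$.

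Concretely, first I would observe that a geometric action of $G$ on a proper metric space makes $G$ (with any word metric coming from a finite generating set) quasi-isometric to the space via an orbit map, by the Milnor--\v Svarc lemma. In particular, both orbit maps $G\to X$ and $G\to H=G\to Y$ exhibit $X$ and $Y$ as quasi-isometric to $G$, so $X$ and $Y$ are quasi-isometric through $G$. Thus the situation is precisely the one considered in Theorem~\ref{Theorem: q-i groups act on phe spaces}, with the quasi-isometry $G\to H$ being the identity.

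Applying that theorem then yields continuous coarse equivalences $f\colon X\to Y$ and $g\colon Y\to X$ whose compositions $gf$ and $fg$ are boundedly (and so properly) homotopic to $\operatorname{id}_X$ and $\operatorname{id}_Y$, which is exactly the conclusion of the corollary. No independent argument is needed, and there is no serious obstacle to overcome; the content of the corollary is essentially that the hypothesis in Theorem~\ref{Theorem: q-i groups act on phe spaces} that $G$ and $H$ be a priori distinct quasi-isometric groups is not necessary to obtain a proper homotopy equivalence between two spaces on which a single group acts geometrically. The only point worth emphasizing is the elementary nature of this deduction, in contrast to the covering-space/Hilbert cube manifold argument (via Theorem~\ref{Theorem: West's Theorem}) that would be required in the free case, or Ontaneda's theorem in the CAT(0) case.
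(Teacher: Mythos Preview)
Your proposal is correct and matches the paper's approach: the corollary is obtained as the special case $H=G$ of Theorem~\ref{Theorem: q-i groups act on phe spaces} (more precisely, both are established together via Theorem~\ref{Theorem: Strong version of Theorem 1}, using the Generalized \v{S}varc--Milnor Lemma to see that $X$ and $Y$ are coarsely equivalent through $G$). Your remarks contrasting this with the covering-space/West and Ontaneda arguments also align with the paper's commentary.
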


The next collection of theorems involves \textquotedblleft$\mathcal{Z}%
$-boundaries\textquotedblright\ of groups---a notion introduced by Bestvina
and expanded upon by Dranishnikov (see
\S \ref{Section: Z-boundaries of groups} for definitions). The idea is to
provide an axiomatic treatment of group boundaries that includes Gromov
boundaries of hyperbolic groups and visual boundaries of CAT(0) groups, but
which can be applied more generally. Both \cite{Bes96} and \cite{Dra06}
recognized ANR theory as the natural setting for such a theory. In order to
obtain some of his most notable conclusions, Bestvina worked only with
finite-dimensional ANRs and torsion-free groups. Dranishnikov relaxed those
conditions, but some of Bestvina's conclusions were then lost. Here we extend
several several theorems from \cite{Bes96} to the more general setting
suggested in \cite{Dra06}.

The first of those theorems allows \textquotedblleft boundary
swapping\textquotedblright\ when a group admits a finite $K\left(  G,1\right)
$ complex. More generally, Bestvina asserted a boundary swapping\ result for
pairs of quasi-isometric groups, each of that type. We obtain generalizations
which allow non-free actions on arbitrary ARs. In addition, we prove an
equivariant version that applies to $E\mathcal{Z}$-boundaries, as defined by
Farrell and Lafont \cite{FaLa05}.

\begin{theorem}
[Boundary Swapping Theorem]\label{Theorem: Boundary Swapping Theorem}Suppose
$G$ acts geometrically on proper metric ARs $X$ and $Y$, and $Y$ can be
compactified to a $\mathcal{Z}$-structure [resp., $E\mathcal{Z}$-structure]
$\left(  \overline{Y},Z\right)  $ for $G$. Then $X$ can be compactified, by
addition of the same boundary, to a $\mathcal{Z}$-structure [resp.,
$E\mathcal{Z}$-structure] $\left(  \overline{X},Z\right)  $ for $G$.
\end{theorem}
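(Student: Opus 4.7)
The plan is to use Corollary \ref{Corollary: Cor to q-i groups act on phe spaces} to obtain continuous coarse equivalences $f:X\to Y$ and $g:Y\to X$ with $gf$ and $fg$ boundedly homotopic to $\operatorname{id}_{X}$ and $\operatorname{id}_{Y}$, and then to transport the compactification of $Y$ back to $X$ along $f$. Concretely, I would take $\overline{X}=X\sqcup Z$ as a set, and declare $U\subseteq\overline{X}$ open exactly when $U\cap X$ is open in $X$ and, for each $z\in U\cap Z$, there is a neighborhood $V$ of $z$ in $\overline{Y}$ with $f^{-1}(V)\cup(V\cap Z)\subseteq U$. Equivalently, the tautological map $\overline{f}:\overline{X}\to\overline{Y}$ that extends $f$ by the identity on $Z$ is continuous and the identity on $Z$. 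The coarseness of $f$ (sequences leaving every compact set of $X$ have images leaving every compact set of $Y$) then gives that $\overline{X}$ is a compact, Hausdorff, second countable, hence metrizable, space containing $X$ as an open dense subset.

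The next task is to verify that $(\overline{X},Z)$ is a $\mathcal{Z}$-structure. Symmetrically I would define $\overline{g}:\overline{Y}\to\overline{X}$ from $g$ (identity on $Z$) and then use the bounded homotopies $gf\simeq\operatorname{id}_{X}$ and $fg\simeq\operatorname{id}_{Y}$ to argue that $\overline{f}$ and $\overline{g}$ are mutually inverse up to homotopy rel $Z$; the essential point is that a bounded homotopy in $X$ (or $Y$) extends continuously to $\overline{X}$ (or $\overline{Y}$) and is stationary on $Z$, because boundedly close translates in $X$ converge to a common boundary point. This yields a homotopy equivalence $\overline{X}\simeq\overline{Y}$, so $\overline{X}$ is contractible, and also produces the retractions needed to check the ANR property locally near $Z$ (away from $Z$, $X$ itself is an ANR). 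The $\mathcal{Z}$-set property for $Z$ in $\overline{X}$ follows by conjugating the instant $\mathcal{Z}$-homotopy for $Z\subset\overline{Y}$ by $\overline{f}$ and $\overline{g}$ and then correcting with the bounded homotopies.

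For the nullity condition, note that although $f$ need not be $G$-equivariant, the geometric actions together with the coarse equivalence guarantee that for every compact $K\subset X$ and every $\gamma\in G$, the set $f(\gamma K)$ lies in a uniformly bounded neighborhood of $\gamma\cdot f(K)$. Since the $G$-translates of $f(K)$ are null in $\overline{Y}$ by hypothesis, continuity of $\overline{f}$ together with the definition of the topology on $\overline{X}$ transports nullity back to $G$-translates of $K$. For the $E\mathcal{Z}$-version, I would promote $f$ to a genuinely $G$-equivariant continuous coarse equivalence---possible because $G$ acts properly and cocompactly on both spaces, so $f$ can be constructed on a fundamental domain and extended equivariantly---and then declare $\overline{f}$ to be $G$-equivariant, which simultaneously forces and validates the continuous extension of the $G$-action to $\overline{X}$ (the action on $Z$ being the one inherited from $\overline{Y}$).

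The principal obstacle I anticipate is verifying that $\overline{X}$ is an ANR, since this is exactly the step that fails under weaker hypotheses and that required CW, manifold, or finite-dimensional assumptions in earlier versions of the theorem. Away from $Z$ there is nothing to check, but near $Z$ one has only the extended coarse equivalence and the bounded-homotopy data to work with, and producing genuine retractions of neighborhoods of $Z$ in $\overline{X}$ requires the ANR-theoretic infrastructure developed earlier in the paper (in particular, the generalized \textquotedblleft connect-the-dots\textquotedblright\ machinery underlying \thmref{Theorem: q-i groups act on phe spaces}). Once the ANR property is in hand, contractibility upgrades $\overline{X}$ to an AR, and the remaining items fall out of the construction.
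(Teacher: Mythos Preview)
Your outline for the $\mathcal{Z}$-structure case is essentially the paper's: pull back the topology along $\overline{f}$, verify continuity of $\overline{g}$ and of the extended bounded homotopy, and use these to show $Z$ is a $\mathcal{Z}$-set in $\overline{X}$. Two remarks on the details. First, you have inflated the ANR step. Once $Z$ is a $\mathcal{Z}$-set in $\overline{X}$, Lemma~\ref{Lemma: Z-compactifications are ANRs} (i.e.\ Hanner's Theorem applied to the instant push-off into the ANR $X$) immediately gives that $\overline{X}$ is an ANR, hence an AR by contractibility; no local retraction argument or connect-the-dots machinery is needed here. Second, for nullity the paper does not chase translates directly but instead proves the stronger fact that $\overline{X}$ is a \emph{controlled} $\mathcal{Z}$-compactification of $(X,d_X)$ and then invokes Lemma~\ref{Lemma: Z-structures are controlled Z-compactifications}; this is cleaner and avoids the ``$f(\gamma K)$ lies near $\gamma f(K)$'' bookkeeping.

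The genuine gap is in your $E\mathcal{Z}$ argument. You assert that $f$ can be promoted to a $G$-equivariant continuous coarse equivalence by ``constructing it on a fundamental domain and extending equivariantly.'' When the action is not free---precisely the case the paper is designed to handle---this fails: a point $x\in X$ with stabilizer $H$ would have to be sent to a point of $Y$ fixed by $H$, and there is no reason such points exist, let alone in a way that can be assembled continuously. The paper's route avoids equivariance entirely. It shows (Proposition in \S\ref{Section: Boundary swapping}) that $f$ and a coarse inverse $h$ can be chosen so that $h\gamma f$ is $K$-close to $\gamma$ on $X$ with a single $K$ working for \emph{all} $\gamma\in G$. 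Since $\overline{h}\,\gamma\,\overline{f}:\overline{X}\to\overline{X}$ is continuous and restricts to $\gamma|_{Z}$ on $Z$, Lemma~\ref{Lemma: boundedly close maps extend} then shows that $\gamma_X\cup\gamma_Z$ is itself continuous. That uniform-closeness statement is the substantive content of the $E\mathcal{Z}$ case, and your proposal does not supply it.
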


For a pair of quasi-isometric groups, the $E\mathcal{Z}$ conclusion no longer
makes sense, but the rest of Theorem \ref{Theorem: Boundary Swapping Theorem}
goes through as follows.

\begin{theorem}
[Generalized Boundary Swapping Theorem]%
\label{Theorem: Generalized Boundary Swapping}Suppose quasi-isometric groups
$G$ and $H$ act geometrically on proper metric ARs $X$ and $Y$, respectively,
and $Y$ can be compactified to a $\mathcal{Z}$-structure $\left(  \overline
{Y},Z\right)  $ for $H$. Then $X$ can be compactified, by addition of the same
boundary, to a $\mathcal{Z}$-structure $\left(  \overline{X},Z\right)  $ for
$G$.
\end{theorem}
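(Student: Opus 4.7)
The plan is to apply Theorem~\ref{Theorem: q-i groups act on phe spaces} to obtain continuous coarse equivalences $f:X\to Y$ and $g:Y\to X$ with $gf$ and $fg$ boundedly homotopic to $\operatorname{id}_X$ and $\operatorname{id}_Y$, and then to pull the compactification of $Y$ back across $f$. As a set, let $\overline{X}:=X\sqcup Z$; topologize $X$ as before, and take as a neighborhood basis at $z\in Z$ the sets $f^{-1}(V)\cup(V\cap Z)$, where $V$ ranges over neighborhoods of $z$ in $\overline{Y}$. Since a continuous coarse equivalence between proper metric spaces is automatically topologically proper, one checks directly that $\overline{X}$ is compact and metrizable and that the canonical map $\overline{f}:\overline{X}\to\overline{Y}$ extending $f$ by $\operatorname{id}_Z$ is continuous.

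The core technical step is a \emph{bounded closeness implies topological closeness} lemma: any continuous map $\phi:Y\to Y$ for which $d_Y(\phi(y),y)$ is uniformly bounded extends continuously to $\overline{Y}$ by fixing $Z$ pointwise. This is a consequence of the nullity axiom on $(\overline{Y},Z)$, since sufficiently small neighborhoods of a boundary point absorb bounded metric thickenings of slightly smaller ones. Applying the lemma to the composition $fg$ shows that $g$ extends continuously to $\overline{g}:\overline{Y}\to\overline{X}$ (again by the identity on $Z$); applying it to the bounded homotopies supplied by Theorem~\ref{Theorem: q-i groups act on phe spaces} extends them to homotopies of $\overline{X}$ and $\overline{Y}$ which fix $Z$ at every time. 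Consequently $\overline{f}$ and $\overline{g}$ are mutually inverse homotopy equivalences relative to $Z$.

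It then remains to verify the $\mathcal{Z}$-structure axioms. Contractibility of $\overline{X}$ follows from its homotopy equivalence with the AR $\overline{Y}$, and ANR-ness follows from the domination $\overline{g}\,\overline{f}\simeq\operatorname{id}_{\overline{X}}$ by the compact ANR $\overline{Y}$, so $\overline{X}$ is a compact AR. To see that $Z$ is a $\mathcal{Z}$-set in $\overline{X}$, pull back an instant-off homotopy $H_t:\overline{Y}\to\overline{Y}$ for $Z$ in $\overline{Y}$: the composition $\overline{g}\circ H_t\circ\overline{f}$ lands in $X$ for each $t>0$ and equals $\overline{g}\,\overline{f}$ at $t=0$, and concatenating with a homotopy from $\operatorname{id}_{\overline{X}}$ to $\overline{g}\,\overline{f}$ yields the required instant-off homotopy. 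For the nullity condition on the $G$-action, use $\overline{f}$ to transport the problem to $Y$: given compact $K\subset X$, the $G$-translates $f(gK)$ track, up to bounded error coming from the quasi-isometry between $G$ and $H$, a family of $H$-translates of the compact set $f(K)\subset Y$, which satisfies nullity in $(\overline{Y},Z)$ by hypothesis; pulling this back through the topology on $\overline{X}$ yields nullity of the $G$-translates of $K$.

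The hard part will be the bounded closeness lemma and its consequences. This is the step that converts the coarse information provided by Theorem~\ref{Theorem: q-i groups act on phe spaces} (bounded homotopies, bounded closeness of $fg$ and $gf$ to the identities) into genuine topological information about the compactification, and it is where the axioms of a $\mathcal{Z}$-structure --- particularly the nullity condition --- enter decisively. Once that bridge is built, the remaining verifications reduce to routine ANR theory and elementary properties of coarse equivalences.
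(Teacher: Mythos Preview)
Your overall strategy matches the paper's: pull back the compactification along a continuous coarse equivalence, extend $g$ and the bounded homotopy by the identity on $Z$, and verify the axioms. But two of your verifications contain genuine gaps.

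\textbf{The $\mathcal{Z}$-set step.} Your proposed instant-off homotopy is the concatenation of (i) the extended bounded homotopy $\overline{J}$ from $\operatorname{id}_{\overline{X}}$ to $\overline{g}\,\overline{f}$ with (ii) $\overline{g}\circ H_t\circ\overline{f}$. But $\overline{J}$ fixes $Z$ pointwise, so for all $t$ in the first half of the concatenation every point of $Z$ remains in $Z$; the result does \emph{not} instantly push $\overline{X}$ off $Z$. This is exactly the difficulty that Lemma~\ref{Lemma: Z-set criterion} is designed to overcome: one must reparameterize so that points near $Z$ traverse the $\overline{J}$-portion in time proportional to their distance from $Z$, and only then begin the $\overline{g}H_t\overline{f}$-portion. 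The hypotheses of that lemma (that $J$ keeps $X\!-\!Z$ inside $X\!-\!Z$ and is the identity on $Z$) are precisely what make the reparameterized homotopy continuous.

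\textbf{The ANR step.} Homotopy domination by a compact ANR does \emph{not} imply ANR: the cone on a Cantor set is contractible, hence dominated by a point, yet is not locally connected and so not an ANR. What actually works is to first establish that $Z$ is a $\mathcal{Z}$-set in $\overline{X}$; then for any open cover $\mathcal{U}$ of $\overline{X}$ the instant-off homotopy at small time gives a map $\overline{X}\to X$ (into the ANR $X$) which is $\mathcal{U}$-close to the identity, and Hanner's Theorem (Theorem~\ref{Theorem: Hanner's Theorem}) applies. This is the content of Lemma~\ref{Lemma: Z-compactifications are ANRs}, and it shows why the $\mathcal{Z}$-set verification must come first.

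A smaller point: your nullity argument (``$f(gK)$ tracks $H$-translates of $f(K)$ up to bounded error'') is really using that sets of uniformly bounded $d_Y$-diameter become small in $\overline{Y}$ far from any compact set. The paper isolates this as the \emph{controlled} $\mathcal{Z}$-compactification condition (Definition~\ref{Definition: controlled Z-compactification}) and proves its equivalence with nullity in Lemma~\ref{Lemma: Z-structures are controlled Z-compactifications}; working with the controlled condition throughout makes the transport to $\overline{X}$ a clean metric argument with no reference to $H$-translates at all.
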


\begin{remark}
A \emph{geometric action} is one that is proper, cocompact, and by isometries.
By results to be discussed in \S \ref{Section: Z-boundaries of groups}, it is
enough to assume that the action is proper and cocompact.
\end{remark}

It is well-known that a word hyperbolic group $G$ has a well-defined Gromov
boundary, but a CAT(0) group can admit non-homeomorphic visual boundaries
\cite{CrKl00}. A version of well-definedness can be recovered using shape
theory; one wishes to assert that any two $\mathcal{Z}$-boundaries of a group
$G$ are shape equivalent. That assertion---implicit in \cite{Geo86}---was made
explicitly in \cite{Bes96}, for certain torsion-free groups. Later, Ontaneda
\cite{Ont05} proved an analogous theorem, without a torsion-free hypothesis,
for the special case of visual boundaries of CAT(0) groups. Here we show that
all of these additional hypotheses are unnecessary.

\begin{theorem}
\label{Theorem: shape of bdry a quasi-isom invariant}If quasi-isometric groups
$G$ and $H$ admit $\mathcal{Z}$-structures $\left(  \overline{X},Z_{1}\right)
$ and $\left(  \overline{Y},Z_{2}\right)  $, respectively, then $Z_{1}$ and
$Z_{2}$ are shape equivalent.
\end{theorem}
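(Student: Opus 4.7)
The plan is a two-step reduction. First, invoke the Generalized Boundary Swapping Theorem (\thmref{Theorem: Generalized Boundary Swapping}) with the $H$-action on $(\overline{Y},Z_{2})$ as input: since $G$ is quasi-isometric to $H$ and acts geometrically on the proper metric AR $X=\overline{X}\setminus Z_{1}$, we obtain a second $\mathcal{Z}$-compactification $(\overline{X}',Z_{2})$ of the same interior $X$ that is a $\mathcal{Z}$-structure for $G$. The quasi-isometry hypothesis has now done all of its work, and what remains is the purely shape-theoretic assertion that any two $\mathcal{Z}$-compactifications of a given locally compact AR have shape equivalent boundaries. Applied to the pair $(\overline{X},Z_{1})$ and $(\overline{X}',Z_{2})$, that assertion yields the theorem.

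For the remaining shape-theoretic step I would argue via the Mardesi\'c--Segal expansion viewpoint. Fix a nested open neighborhood base $V_{1}\supset V_{2}\supset\cdots$ of $Z_{1}$ in $\overline{X}$ with $\bigcap V_{n}=Z_{1}$; each $V_{n}$, being open in a compact AR, is itself an ANR, so $\{V_{n}\}$ with the inclusion bonding maps is a shape expansion of $Z_{1}$. The defining $\mathcal{Z}$-property --- that $\mathrm{id}_{\overline{X}}$ admits arbitrarily small homotopies pushing $\overline{X}$ off of $Z_{1}$ --- allows each bonding map $V_{n+1}\hookrightarrow V_{n}$ to be homotoped in $V_{n}$ into $V_{n}\setminus Z_{1}$, so (passing to a cofinal subsystem) $\{V_{n}\}$ is pro-homotopy equivalent to $\{V_{n}\setminus Z_{1}\}$. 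Since $V_{n}\setminus Z_{1}$ is the complement in $X$ of the compactum $\overline{X}\setminus V_{n}$, and those compacta exhaust $X$, the system $\{V_{n}\setminus Z_{1}\}$ is a cofinal system of neighborhoods of infinity in $X$, and is in particular a pro-homotopy invariant of the proper homotopy type of $X$. The identical construction applied to $(\overline{X}',Z_{2})$ yields a shape expansion of $Z_{2}$ from the same pro-system, so $Z_{1}$ and $Z_{2}$ are shape equivalent.

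The main obstacle is not the quasi-isometry --- that is entirely absorbed by \thmref{Theorem: Generalized Boundary Swapping} --- but rather the pro-homotopy identification of $\{V_{n}\}$ with $\{V_{n}\setminus Z_{1}\}$ in the generality of arbitrary proper metric ARs. Under finite-dimensionality or CW hypotheses this step is classical; without them it requires careful use of the ANR and $\mathcal{Z}$-set machinery developed earlier in the paper (open subsets of ANRs are ANRs; the $\mathcal{Z}$-set pushing/homotopy-extension property; uniform control so that the small homotopies stay inside the chosen $V_{n}$). A cleaner alternative, equally available, is to bypass the swap entirely: apply \thmref{Theorem: q-i groups act on phe spaces} to obtain a proper homotopy equivalence $X\simeq Y$, and then feed it into the same shape principle reformulated as: proper homotopy equivalence of interiors implies shape equivalence of $\mathcal{Z}$-boundaries.
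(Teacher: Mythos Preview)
Your proposal is correct and, in its first step, identical to the paper's: both apply the Generalized Boundary Swapping Theorem to reduce to the situation of a single proper metric AR $X$ admitting two $\mathcal{Z}$-compactifications $\overline{X}=X\cup Z_{1}$ and $\overline{X}'=X\cup Z_{2}$.

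The difference is in how the shape comparison is carried out. You work with \emph{open} neighborhoods $V_{n}$ of $Z_{1}$ in $\overline{X}$ (automatically ANRs by Corollary~\ref{Corollary: open subsets of ANRs are ANRs}), invoke the Mardešić--Segal expansion framework that permits noncompact ANRs, and use the $\mathcal{Z}$-set property (via Lemma~\ref{Lemma: Z-sets in subspaces}) to identify $\{V_{n}\}$ with $\{V_{n}\setminus Z_{1}\}$ in pro-homotopy; the latter is a cofinal system of open neighborhoods of infinity in $X$, hence interleaves with the analogous system coming from $Z_{2}$. The paper instead insists on \emph{compact} ANR neighborhoods of $Z_{1}$ (equivalently, closed ANR neighborhoods of infinity in $X$), which keeps the shape theory elementary but forces a detour: such neighborhoods need not exist in an arbitrary AR, so the paper crosses with $[0,1]^{\infty}$ and invokes Edwards' Theorem (Theorem~\ref{Theorem: Edwards' Theorem}) to pass to a Hilbert cube manifold, where they do. The paper's remark preceding its proof explicitly acknowledges your route as a legitimate alternative (attributing it to \cite{MaSe82} and noting Ontaneda used it in \cite{Ont05}). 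In short: your approach trades a deep theorem (Edwards') for a more sophisticated shape-theoretic framework; the paper makes the opposite trade.
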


\begin{corollary}
\label{Corollary: Shape of bdry a group invariant}If a group $G$ admits a
$\mathcal{Z}$-boundary, then that boundary is well-defined up to shape equivalence.
\end{corollary}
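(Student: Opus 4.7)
The plan is that this corollary should follow from Theorem~\ref{Theorem: shape of bdry a quasi-isom invariant} essentially by inspection. Suppose $G$ admits two $\mathcal{Z}$-structures, say $(\overline{X},Z_{1})$ and $(\overline{Y},Z_{2})$, arising from possibly different geometric actions on possibly different proper metric ARs $X$ and $Y$. I would take $H=G$ in the statement of Theorem~\ref{Theorem: shape of bdry a quasi-isom invariant}, noting that any finitely generated group is (trivially) quasi-isometric to itself via the identity map of any word metric. The theorem then directly yields that $Z_{1}$ and $Z_{2}$ are shape equivalent.

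There is essentially no obstacle to overcome; the one point worth flagging is a slight notational/bookkeeping check. To apply the theorem one needs the two $\mathcal{Z}$-structures to be structures \emph{for the same group} viewed as quasi-isometric via a specified map, and one should verify that the identity map of $G$ (or equivalently, a quasi-isometry between any two word metrics on $G$ coming from two finite generating sets) qualifies as the quasi-isometry hypothesized in Theorem~\ref{Theorem: shape of bdry a quasi-isom invariant}. Since admitting a $\mathcal{Z}$-structure forces $G$ to be finitely generated (via the geometric action and the \v{S}varc--Milnor lemma), this is automatic.

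Thus the proof reduces to a single sentence invoking Theorem~\ref{Theorem: shape of bdry a quasi-isom invariant} with $G=H$ and the identity quasi-isometry, and I would write it that way without further elaboration.
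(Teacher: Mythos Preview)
Your proposal is correct and matches the paper's approach exactly: the paper states Corollary~\ref{Corollary: Shape of bdry a group invariant} immediately after Theorem~\ref{Theorem: shape of bdry a quasi-isom invariant} and gives no separate proof, treating it as the obvious specialization $G=H$. Your observation that finite generation follows from the \v{S}varc--Milnor lemma (so that word metrics and the identity quasi-isometry make sense) is a reasonable sanity check, but the paper does not even remark on it.
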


\section{Some basics of ANR theory\label{Section: Basics of ANRs}}

In this section we cover the necessary background from ANR theory. Rather than
simply quoting results from the literature, we provide a brief, elementary
treatment of the topic---including proofs of most key facts used in this
paper. More complete treatments can be found in \cite{Hu65} or \cite{vMi89}.

Recall that a locally compact space $X$ is an an \emph{absolute neighborhood
retract} (ANR) if, whenever $X$ is embedded as a closed subset of another
space $Y$, some neighborhood of $X$ retracts onto $X$; if the entire space $Y$
always retracts onto $X$, we call $X$ an\ \emph{absolute retract} (AR).

\begin{remark}
A finite-dimensional ANR [resp., AR] is often called a \emph{Euclidean
neighborhood retract} (ENR) [resp. \emph{Euclidean retract} (ER)]. A key
aspect of this paper is the development of techniques that do not require a
restriction to this subcategory.
\end{remark}

Some of the most important properties of A[N]Rs involve extensions of maps. In
fact, an alternative approach \emph{defines }A[N]R using these very extension
properties. In that setting, they are sometimes called A[N]Es (absolute
[neighborhood] extensors). We will not use that terminology, but the following
characterization is crucial.

\begin{lemma}
[Extensor characterization of ARs and ANRs ]%
\label{Lemma: ANE characterization}A locally compact space $X$ is an AR
[resp., ANR] if and only if it satisfies the following extension
property:\medskip

\noindent(\dag) If $A$ is a closed subset of an arbitrary space $Y$ and
$f:A\rightarrow X$ is continuous, then there is a continuous extension
$\overline{f}:Y\rightarrow X$ [resp., a continuous extension $f:U\rightarrow
X$, where $U$ is a neighborhood of $A$ in $Y$.].
\end{lemma}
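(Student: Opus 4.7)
The plan is to establish the two directions separately. The direction $(\dag)\Rightarrow$ AR [ANR] is essentially immediate from the definitions: embed $X$ as a closed subset of an arbitrary space $Y$, and apply $(\dag)$ to the identity map $\operatorname{id}_{X}:X\to X$ with $A=X$. The resulting continuous extension $\overline{f}:Y\to X$ [resp., continuous extension to a neighborhood of $X$ in $Y$] is precisely the retraction required to conclude $X$ is an AR [resp., ANR].

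For the converse, suppose $X$ is an AR [ANR], and let $A$ be closed in a separable metrizable space $Y$ with $f:A\to X$ continuous. The strategy is to build the adjunction space $Z=Y\cup_{f}X$, obtained from $Y\sqcup X$ by identifying each $a\in A$ with $f(a)\in X$. Inside $Z$, the copy of $X$ sits as a closed subset, and the natural map $j:Y\to Z$ satisfies $j|_{A}=f$ once $X$ is identified with its image. Applying the AR [ANR] hypothesis to $X$ as a closed subset of $Z$ yields a retraction $r:Z\to X$ [resp., a retraction $r:W\to X$ defined on an open neighborhood $W$ of $X$ in $Z$]. The composition $\overline{f}=r\circ j$ is then the desired extension, defined on all of $Y$ [resp., on the open neighborhood $U=j^{-1}(W)$ of $A$ in $Y$], and $\overline{f}|_{A}=f$ by construction.

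The main obstacle is the technical verification that $Z$ is itself a separable metrizable space and that $X$ embeds in $Z$ as a closed subset; only then does the AR/ANR hypothesis apply inside $Z$. This is a classical fact about adjunction spaces under a closed cofibrant inclusion: when $A$ is closed in a separable metric $Y$ and $f:A\to X$ is continuous with $X$ separable metric, $Y\cup_{f}X$ admits a separable metric topology in which the image of $X$ is closed. One can either invoke the standard treatments (for instance in Hu or van Mill) or, since $X$ here is a locally compact ANR, construct an explicit metric by gluing the given metrics across $A$. Granted this point, the composition argument above closes out the proof.
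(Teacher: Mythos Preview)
Your argument is correct, but the paper takes a different and more self-contained route for the forward implication. Instead of forming the adjunction space $Z=Y\cup_{f}X$ and invoking the (nontrivial) fact that such adjunction spaces of separable metric spaces are again separable metric, the paper chooses once and for all a concrete closed embedding $X\hookrightarrow\mathbb{R}^{\infty}$ (via $x\mapsto(d(x_{i},x))_{i}$ for a countable dense $\{x_{i}\}$ and a proper metric $d$), applies the Tietze Extension Theorem coordinate-wise to extend $jf:A\to\mathbb{R}^{\infty}$ to $F:Y\to\mathbb{R}^{\infty}$, and then post-composes with a retraction $r:\mathbb{R}^{\infty}\to X$ (or $r:V\to X$ on a neighborhood, taking $U=F^{-1}(V)$ in the ANR case).

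The tradeoff: your adjunction-space argument is the more conceptual one and works verbatim in broader categories, but it outsources exactly the hard step---metrizability of $Y\cup_{f}X$---to an outside reference, which in an expository section meant to be elementary is a real cost. The paper's $\mathbb{R}^{\infty}$/Tietze argument needs nothing beyond first principles and the existence of a proper metric on a locally compact separable metrizable space, so it keeps the treatment self-contained.
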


\begin{proof}
We begin with the reverse implications. Suppose $X$ is embedded as a closed
subset of a space $Y$ and consider the identity map $\operatorname*{id}%
_{X}:X\rightarrow X$. By viewing the domain copy of $X$ as a closed subset of
$Y$, the absolute extension property guarantees a map $\overline
{f}:Y\rightarrow X$ that restricts to the identity on $X$, i.e., $\overline
{f}$ is a retraction. If we assume the weaker extension property, we get a
retraction $\overline{f}:U\rightarrow X$.

For the forward implications, choose a proper metric $d$ on $X$, let $\left\{
x_{i}\right\}  _{i=1}^{\infty}\subseteq X$ be a dense subset, and for each $i$
define $g_{i}\left(  x\right)  =d\left(  x_{i},x\right)  $. Then
$\mathbf{g=}\left(  g_{i}\right)  $ embeds $X$ as a closed subset of $%
\mathbb{R}
^{\infty}$ (with the product topology). In what follows, view $X$ as a closed
subset of $%
\mathbb{R}
^{\infty}$ and let $j:X\hookrightarrow%
\mathbb{R}
^{\infty}$ be the inclusion map.

Suppose $A$ is a closed subset of a space $Y$ and $f:A\rightarrow X$ is
continuous. By applying the Tietze Extension Theorem coordinate-wise, extend
$jf:A\rightarrow%
\mathbb{R}
^{\infty}$ to a continuous map $F:Y\rightarrow%
\mathbb{R}
^{\infty}$. If $X$ is an AR, choose a retraction $r:%
\mathbb{R}
^{\infty}\rightarrow X$ and let $\overline{f}=rF$. If $X$ is an ANR, choose a
retraction $r:V\rightarrow X$, where $V$ is a neighborhood of $X$ in $%
\mathbb{R}
^{\infty}$; then let $U=F^{-1}\left(  V\right)  $ and $\overline{f}=r\left.
F\right\vert _{U}$.
\end{proof}

\begin{proposition}
[Homotopy Extension Property]\label{Proposition: HEP}Let $A$ be a closed
subset of a space $Y$, $T=\left(  Y\times\left\{  0\right\}  \right)  \cup
A\times\left[  0,1\right]  $ and $h:T\rightarrow X$ a map into an ANR. Then
there is an extension of $h$ to a homotopy $H:Y\times\left[  0,1\right]
\rightarrow X$. If the homotopy $\left.  h\right\vert _{A\times\left[
0,1\right]  }$ is $K$-bounded\footnote{Bounded homotopies are defined in
\S \ref{Section: Group theory and metric geometry}.} and $Y$ is locally
compact, then $H$ can be chosen to be $\left(  K+1\right)  $-bounded.
\end{proposition}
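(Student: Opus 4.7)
The set $T=(Y\times\{0\})\cup(A\times[0,1])$ is closed in $Y\times[0,1]$, so the ANR extensor property (Lemma~\ref{Lemma: ANE characterization}) applied to $h\colon T\to X$ yields a continuous extension $\widetilde H\colon U\to X$ on some open neighborhood $U$ of $T$ in $Y\times[0,1]$. To pass from $U$ to all of $Y\times[0,1]$, I will produce a continuous ``stopping function'' $\phi\colon Y\to(0,1]$ satisfying $\phi\equiv 1$ on $A$ and $\{y\}\times[0,\phi(y)]\subseteq U$ for every $y\in Y$, and then set
\[
H(y,t)=\widetilde H\bigl(y,\min(t,\phi(y))\bigr).
\]
Continuity is automatic, and $H|_T=h$: on $Y\times\{0\}$, $\min(0,\phi(y))=0$; on $A\times[0,1]$, $\phi\equiv 1$ forces $\min(t,1)=t$.

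To build $\phi$ I use a two-type open cover of $Y$. For $y_0\in A$, the tube lemma applied to the compact set $\{y_0\}\times[0,1]\subseteq U$ supplies an open neighborhood $V_{y_0}\ni y_0$ with $V_{y_0}\times[0,1]\subseteq U$; set $\tau_{y_0}=1$. For $y_0\in Y\setminus A$, choose $V_{y_0}\subseteq Y\setminus A$ (possible since $A$ is closed) and $\tau_{y_0}\in(0,1]$ with $V_{y_0}\times[0,\tau_{y_0}]\subseteq U$. A locally finite partition of unity $\{\lambda_i\}$ subordinate to $\{V_{y_i}\}$ then defines $\phi(y)=\sum_i\lambda_i(y)\tau_{y_i}$. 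Since $\phi(y)\leq\max\{\tau_{y_i}:\lambda_i(y)>0\}$, picking an index $i_\ast$ realizing that maximum yields $y\in V_{y_{i_\ast}}$ and $\{y\}\times[0,\phi(y)]\subseteq V_{y_{i_\ast}}\times[0,\tau_{y_{i_\ast}}]\subseteq U$. When $y\in A$, only type-one $V_{y_i}$'s contain $y$ (the type-two ones miss $A$), so every relevant $\tau_{y_i}$ equals $1$ and $\phi(y)=1$.

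For the $(K{+}1)$-bounded refinement, I build the diameter bound into each neighborhood. At $y_0\in A$: local compactness of $Y$ furnishes a compact set $C$ containing an open neighborhood of $y_0$ with $C\times[0,1]\subseteq U$; uniform continuity of $\widetilde H$ on $C\times[0,1]$ lets me shrink $V_{y_0}\subseteq C$ so that for every $y'\in V_{y_0}$ the track $\widetilde H(\{y'\}\times[0,1])$ lies within Hausdorff distance $1/2$ of the $K$-bounded track $h(\{y_0\}\times[0,1])$, so its diameter is at most $K+1$. At $y_0\in Y\setminus A$: continuity of $\widetilde H$ at $(y_0,0)$ lets me shrink $V_{y_0}$ and $\tau_{y_0}$ so that $\mathrm{diam}\,\widetilde H(V_{y_0}\times[0,\tau_{y_0}])<1$. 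With these refined data the same construction of $\phi$ still satisfies the two conditions above, and in addition, for any $y$ the track of $H$ at $y$ equals $\widetilde H(\{y\}\times[0,\phi(y)])\subseteq\widetilde H(V_{y_{i_\ast}}\times[0,\tau_{y_{i_\ast}}])$, whose diameter is at most $K+1$.

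The main obstacle is the final paragraph's simultaneous bookkeeping---keeping $\phi\equiv 1$ on $A$, the tubes $\{y\}\times[0,\phi(y)]$ inside $U$, and the resulting track diameters bounded by $K+1$. This is precisely where the local compactness hypothesis on $Y$ enters: it is what permits the uniform continuity argument for $\widetilde H$ on a compact tube $C\times[0,1]$ above a neighborhood of each point of $A$, converting the pointwise $K$-bound on $h$-tracks into a $(K+1)$-bound on nearby $\widetilde H$-tracks.
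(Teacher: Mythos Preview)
Your proof is correct and follows the same strategy as the paper's: extend $h$ to a neighborhood $U$ of $T$ via the extensor characterization, then reparametrize in the $t$-coordinate to pull everything back into $U$, using local compactness and uniform continuity to control track diameters near $A$. The only difference is cosmetic---the paper uses a single Urysohn function $\lambda$ (equal to $1$ on $A$ and $0$ off a tube neighborhood $V$ with $V\times[0,1]\subseteq U$) and the scaling $(y,t)\mapsto(y,\lambda(y)t)$, whereas you assemble a stopping function $\phi$ from a partition of unity and use the truncation $(y,t)\mapsto(y,\min(t,\phi(y)))$; one small wording slip in your last sentence (the set $\widetilde H(V_{y_{i_\ast}}\times[0,\tau_{y_{i_\ast}}])$ need not itself have diameter $\le K+1$ in the type-one case---it is the individual track $\widetilde H(\{y\}\times[0,1])$ that does) is harmless, since your earlier paragraph already established the needed per-track bound.
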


\begin{proof}
By Lemma \ref{Lemma: ANE characterization} there is an extension of $h$ to
$\overline{h}:U\rightarrow X$, where $U$ is a neighborhood of $T$ in
$Y\times\left[  0,1\right]  $. Let $V\subseteq Y$ be an open neighborhood of
$A$ such that $V\times\left[  0,1\right]  \subseteq U$, and choose a Urysohn
function $\lambda:Y\rightarrow\left[  0,1\right]  $ with $\lambda\left(
Y\backslash V\right)  =0$ and $\lambda\left(  A\right)  =1$. Define
\[
\overline{H}\left(  y,t\right)  =\left\{
\begin{array}
[c]{ll}%
\overline{h}\left(  y,\lambda\left(  y\right)  \cdot t\right)  & \text{if
}y\in V\\
h\left(  y,0\right)  & \text{if }y\notin V
\end{array}
\right.
\]
Now suppose $\left.  h\right\vert _{A\times\left[  0,1\right]  }$ is
$K$-bounded and $Y$ is locally compact. Then each $a\in A$ has a compact
neighborhood $N_{a}\subseteq V$; so by uniform continuity, there is an open
neighborhood $V_{a}\subseteq N_{a}$ over which all tracks of $\left.
\overline{h}\right\vert _{V_{a}\times\left[  0,1\right]  }$ have diameter
$<K+1$. Rechoose $V$ to be $\cup_{a\in A}V_{a}$; then $\overline{H}$, as
defined above, is $(K+1)$-bounded.
\end{proof}

\begin{corollary}
\label{Corollary: null-homotopic extension}If $A$ is a closed subset of an
arbitrary space $Y$ and $f:A\rightarrow X$ is a null-homotopic map into an
ANR, then $f$ extends to a map $\overline{f}:Y\rightarrow X$.
\end{corollary}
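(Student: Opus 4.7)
The plan is to reduce the extension problem to an application of the Homotopy Extension Property (Proposition~\ref{Proposition: HEP}). The key observation is that a constant map $Y \to X$ always extends (trivially) any constant map defined on $A$, so if we can first homotope $f$ to a constant map (which is precisely what null-homotopy gives us), we can use HEP to ``propagate'' the extension back along the homotopy.

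Concretely, I would argue as follows. If $A = \emptyset$ there is nothing to do, so assume $A \neq \emptyset$ and pick any $x_0 \in X$. Since $f$ is null-homotopic, there is a homotopy $h' : A \times [0,1] \to X$ with $h'(a,0) = x_0$ and $h'(a,1) = f(a)$. Define a map $h : T \to X$ on $T = (Y \times \{0\}) \cup (A \times [0,1])$ by
\[
h(y,0) = x_0 \quad \text{for } y \in Y, \qquad h(a,t) = h'(a,t) \quad \text{for } (a,t) \in A \times [0,1].
\]
These two prescriptions agree on the overlap $A \times \{0\}$, and $T$ is closed in $Y \times [0,1]$, so $h$ is continuous. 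Apply Proposition~\ref{Proposition: HEP} to obtain an extension $H : Y \times [0,1] \to X$ of $h$. Then $\overline{f} : Y \to X$ defined by $\overline{f}(y) = H(y,1)$ satisfies $\overline{f}(a) = H(a,1) = h'(a,1) = f(a)$ for every $a \in A$, so $\overline{f}$ is the required extension.

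There is essentially no obstacle here beyond correctly setting up the input to HEP; all of the real work was done in establishing Proposition~\ref{Proposition: HEP}. The only delicate point worth checking is the continuity of the glued map $h$ on $T$, which is immediate from the closedness of $Y \times \{0\}$ and $A \times [0,1]$ in $T$ and the agreement of $h'(a,0)$ with the constant value $x_0$ on $A \times \{0\}$.
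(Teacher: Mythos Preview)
Your argument is correct and follows exactly the same route as the paper: extend the null-homotopy to the constant map on $Y\times\{0\}$, apply the Homotopy Extension Property, and take the time-$1$ map. One tiny phrasing note: the point $x_{0}$ should be the one supplied by the null-homotopy rather than chosen in advance (an ANR need not be path-connected), but this is cosmetic and does not affect the argument.
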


\begin{proof}
Let $J:A\times\left[  0,1\right]  \rightarrow X$ be a homotopy with
$J_{0}\left(  A\right)  =\left\{  p\right\}  $ and $J_{1}=f$. Extend $J$ to a
map of $h:\left(  Y\times\left\{  0\right\}  \right)  \cup A\times\left[
0,1\right]  \rightarrow X$ by sending $Y\times\left\{  0\right\}  $ to $p$;
then apply the Homotopy Extension Property and let $\overline{f}=H_{1}$.
\end{proof}

\begin{corollary}
\label{Corollary: AR iff contractible ANR}An ANR is an AR if and only if it is contractible.
\end{corollary}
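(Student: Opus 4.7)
The plan is to handle the two directions separately, each a short application of the tools just developed.

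For the reverse direction (contractible ANR $\Rightarrow$ AR), I will verify the absolute extension property (\dag) from Lemma \ref{Lemma: ANE characterization}. Suppose $X$ is a contractible ANR, $A$ is closed in an arbitrary space $Y$, and $f:A\rightarrow X$ is continuous. Since $X$ is contractible, any map into $X$ is null-homotopic, and in particular so is $f$. By Corollary \ref{Corollary: null-homotopic extension}, $f$ extends to a map $\overline{f}:Y\rightarrow X$. This is exactly the absolute extension property, so Lemma \ref{Lemma: ANE characterization} gives that $X$ is an AR.

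For the forward direction (AR $\Rightarrow$ contractible), I will exploit the embedding of $X$ as a closed subset of a contractible ambient space, just as in the proof of Lemma \ref{Lemma: ANE characterization}. Embed $X$ as a closed subset of $\mathbb{R}^{\infty}$ via the map $\mathbf{g}$ constructed there. Since $X$ is an AR, there is a retraction $r:\mathbb{R}^{\infty}\rightarrow X$. The space $\mathbb{R}^{\infty}$ is contractible by the straight-line homotopy $H(v,t)=(1-t)v$, which deforms $\mathrm{id}_{\mathbb{R}^{\infty}}$ to the constant map at $0$. Composing, the map
\[
F:X\times\left[  0,1\right]  \rightarrow X,\qquad F\left(  x,t\right)  =r\left(  H\left(  x,t\right)  \right)
\]
satisfies $F(x,0)=r(x)=x$ (using that $r$ is a retraction and $x\in X$) and $F(x,1)=r(0)$, a constant. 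Hence $X$ is contractible.

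The proof is essentially immediate given the machinery already in place, so there is no real obstacle; the only point requiring care is ensuring that the extension machinery (Corollary \ref{Corollary: null-homotopic extension}) truly applies in the stated generality (closed $A\subseteq Y$, arbitrary $Y$), but this is precisely what was proved.
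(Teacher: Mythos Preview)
Your proof is correct and follows essentially the same approach as the paper. The only cosmetic difference is that, for the reverse implication, you verify the extension characterization (\dag) of Lemma~\ref{Lemma: ANE characterization} for an arbitrary $f:A\to X$, whereas the paper works directly with the definition of AR (applying Corollary~\ref{Corollary: null-homotopic extension} to $\operatorname{id}_X$ viewed as a map from the closed subset $X\subseteq Y$); both arguments reduce immediately to Corollary~\ref{Corollary: null-homotopic extension}, and the forward implication is identical.
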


\begin{proof}
Suppose $X$ is an $AR$. By the argument used in Lemma
\ref{Lemma: ANE characterization}, we may assume $X$ is a closed subset of $%
\mathbb{R}
^{\infty}$; so by hypothesis, there is a retraction $r:%
\mathbb{R}
^{\infty}\rightarrow X$. Composing any contraction of $%
\mathbb{R}
^{\infty}$ with $r$ gives a contraction of $X$.

For the converse, assume $X$ is contractible and $X\hookrightarrow Y$ is an
embedding as a closed subset of a space $Y$. Then $\operatorname*{id}%
_{X}:X\rightarrow X$ is a null-homotopic map into an ANR, so by Corollary
\ref{Corollary: null-homotopic extension} it extends to a map of $Y$ into $X$.
That map is a retraction.
\end{proof}

\begin{corollary}
\label{Corollary: open subsets of ANRs are ANRs}Every open subset of an ANR is
an ANR.
\end{corollary}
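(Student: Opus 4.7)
The plan is to reduce the claim to the extensor characterization of ANRs given in Lemma \ref{Lemma: ANE characterization}, since that reformulation is the easiest way to verify the ANR property from the extension side rather than directly wrestling with closed embeddings of $U$ into arbitrary spaces.

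First I would note that local compactness is inherited: since $X$ is locally compact and metrizable (hence Hausdorff), any open subset $U \subseteq X$ is locally compact, so $U$ is eligible to be an ANR under the paper's conventions. Then I would verify property $(\dag)$ for $U$. Let $A$ be a closed subset of a space $Y$ and let $f \colon A \to U$ be continuous. Composing with the inclusion $U \hookrightarrow X$ yields a continuous map into the ANR $X$, so by Lemma \ref{Lemma: ANE characterization} there is an extension $\overline{f} \colon W \to X$ defined on a neighborhood $W$ of $A$ in $Y$.

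The key step is then a one-line preimage argument: set $V = \overline{f}^{-1}(U)$. Because $U$ is open in $X$ and $\overline{f}$ is continuous, $V$ is open in $W$, and since $W$ is a neighborhood of $A$ in $Y$, so is $V$. Moreover $A \subseteq V$ because $\overline{f}|_A = f$ takes values in $U$. Restricting gives a continuous extension $\overline{f}|_V \colon V \to U$, verifying $(\dag)$ for $U$.

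I do not expect a genuine obstacle: the only subtlety is remembering that the extensor characterization lets the target change freely, so that shrinking the neighborhood from $W$ to $\overline{f}^{-1}(U)$ is legal and produces precisely the extension into $U$ that the definition requires. The openness of $U$ in $X$ is used exactly once, to ensure that this shrunken domain is still a neighborhood of $A$.
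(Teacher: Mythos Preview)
Your argument is correct and is essentially identical to the paper's proof: both apply Lemma~\ref{Lemma: ANE characterization} to extend $f$ into $X$ over a neighborhood, then restrict to the preimage of the open set $U$ to obtain the required extension into $U$. The only addition is your explicit remark that open subsets of locally compact metrizable spaces are locally compact, which the paper leaves implicit.
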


\begin{proof}
Let $V$ be an open subset of an ANR $X$, and $f:A\rightarrow V$ a continuous
map, where $A$ is a closed subset of a space $Y$. By Lemma
\ref{Lemma: ANE characterization}, there is an open neighborhood $U^{\prime}$
of $A$ in $Y$ and an extension $f^{\prime}:U^{\prime}\rightarrow X$ of $f$.
Let $U=U^{\prime}\cap(f^{\prime})^{-1}\left(  V\right)  $ and $\overline
{f}=\left.  f^{\prime}\right\vert _{U}$.
\end{proof}

With additional work, one can prove a similar, but different proposition.

\begin{proposition}
A space $X$ with the property that each $x\in X$ has an open neighborhood that
is an ANR, is itself an ANR.
\end{proposition}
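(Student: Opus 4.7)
My plan is to verify the extensor characterization of Lemma~\ref{Lemma: ANE characterization}. Let $A$ be a closed subset of a space $Y$ and $f : A \to X$ continuous; the task is to extend $f$ to an open neighborhood of $A$ in $Y$. Because each point of $X$ has an ANR (hence locally compact) neighborhood, $X$ itself is locally compact, and being separable metric it is therefore Lindel\"of. So fix a countable open cover $\{U_n\}_{n \geq 1}$ of $X$ by ANRs. I would then establish the two-set case first and bootstrap to the countable case afterwards.

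The heart of the argument is the two-set case: if $X = U \cup V$ with $U, V$ open ANRs, then $X$ is an ANR. Using normality of the metric space $X$, choose closed sets $X \setminus V \subseteq \operatorname{int}(K_U) \subseteq K_U \subseteq U$ and $X \setminus U \subseteq \operatorname{int}(K_V) \subseteq K_V \subseteq V$, so that $f^{-1}(K_U)$ and $f^{-1}(K_V)$ are disjoint closed subsets of $Y$. The complement $A_0 := A \setminus \left(f^{-1}(\operatorname{int} K_U) \cup f^{-1}(\operatorname{int} K_V)\right)$ is closed in $Y$, and $f$ sends it into $U \cap V$, which is itself an ANR by Corollary~\ref{Corollary: open subsets of ANRs are ANRs}. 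Invoking the ANR property of $U$, $V$, and $U \cap V$ in turn yields extensions of $f$ over open neighborhoods of each of these three closed pieces; these are then spliced into a single extension on a neighborhood of $A$ using Urysohn functions on $Y$ to control transition zones. The chief obstacle is precisely this splicing step: the three local extensions need not agree on overlap. The fix I envision is to arrange at the outset that the $U$- and $V$-extensions restrict to the $U \cap V$-extension on buffer zones near $A_0$; this is feasible because $U \cap V$ is open in both $U$ and $V$, so a $U \cap V$-valued extension of $f$ near $A_0$ is in particular a $U$-valued (or $V$-valued) extension, and can be used as enlarged boundary data when invoking the ANR property of $U$ or $V$ to extend further.

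Given the two-set lemma, finite induction shows that each $X_n := U_1 \cup \cdots \cup U_n$ is an ANR. To pass to the infinite union $X = \bigcup_n X_n$, I would exploit $\sigma$-compactness: write $X = \bigcup_m L_m$ with $L_m$ compact and $L_m \subseteq \operatorname{int}(L_{m+1})$, observe that each $L_m$ sits inside some $X_{n(m)}$, and solve the extension problem on a neighborhood of $f^{-1}(L_m)$ using the ANR property of $X_{n(m)}$. These extensions assemble into a global extension on a shrinking neighborhood of $A$ by a standard telescoping construction, provided each successive extension is chosen to agree with its predecessor on the previously resolved region---again an application of the extensor property to closed subsets where the map is already defined. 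The genuine difficulty of the whole theorem is the patching in the two-set case, addressed above.
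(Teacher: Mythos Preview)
The paper does not actually prove this proposition. It is stated immediately after Corollary~\ref{Corollary: open subsets of ANRs are ANRs} with only the prefatory remark ``With additional work, one can prove a similar, but different proposition,'' and no argument is given. So there is no proof in the paper to compare against.

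Your outline follows the classical route (essentially Hanner's argument): reduce to a countable open cover by ANRs, establish the two-set union lemma, induct to finite unions, then telescope to the countable union using $\sigma$-compactness. This is a correct strategy and your identification of the two-set splicing as the crux is accurate. Two small points worth tightening: first, your sets $K_U$ and $K_V$ as described need not be disjoint (each lies in $U$ or $V$, but those overlap); you should invoke normality to separate the disjoint closed sets $X\setminus V$ and $X\setminus U$ by disjoint closed neighborhoods at the outset. Second, in the telescoping step you must ensure that the successive extensions land in $X$ (not merely in some $X_{n(m)}$) on the regions where they are glued, and that the domain neighborhood does not collapse in the limit; this is routine but requires keeping track of which $X_n$ each piece maps into. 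With those caveats, your sketch is sound and is the standard argument the paper is gesturing toward.
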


The next proposition provides a wealth of examples.

\begin{proposition}
\label{Proposition: lots of A[N]Rs}Being an AR [ANR] is a topological
property. Furthermore,

\begin{enumerate}
\item \label{Assertion 1: Lots of ANRs}$\left[  0,1\right]  ,[0,1),$ and
$\left(  0,1\right)  $ are ARs,

\item \label{Assertion 2: Lots of ANRs}every finite product of A[N]Rs is an A[N]R,

\item \label{Assertion3: Lots of ANRs}a countably infinite product of ARs is
an AR, provided all but finitely many factors are compact,

\item \label{Assertion 4: Lots of ANRs}Every retract of an A[N]R is an A[N]R.
\end{enumerate}
\end{proposition}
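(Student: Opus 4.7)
The plan is to reduce each assertion to the extensor characterization of Lemma~\ref{Lemma: ANE characterization}, after which everything becomes a straightforward extension argument. Topological invariance is immediate: if $h\colon X\to X'$ is a homeomorphism and $X$ satisfies property (\dag), then given $f\colon A\to X'$ with $A\subseteq Y$ closed, one extends $h^{-1}\circ f$ over $Y$ (or a neighborhood of $A$) via (\dag) and post-composes with $h$ to extend $f$; local compactness is preserved by homeomorphism, so $X'$ inherits the AR or ANR property.

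I would prove (4) next, since it feeds into (1). If $R\subseteq X$ is a retract of an A[N]R with retraction $r\colon X\to R$, then $R$ is closed in the Hausdorff space $X$, hence locally compact. Given $f\colon A\to R$ with $A$ closed in $Y$, view $f$ as a map into $X$; (\dag) yields an extension $F\colon Y\to X$ (or $F\colon U\to X$ on a neighborhood), and $r\circ F$ is the required extension into $R$. For (1), the Tietze Extension Theorem shows directly that $\mathbb{R}$ satisfies (\dag), so $\mathbb{R}$ is an AR, and hence so is $(0,1)$ by topological invariance. The space $[0,\infty)$ is a retract of $\mathbb{R}$ via $x\mapsto\max\{x,0\}$, so (4) makes it an AR, whence $[0,1)\approx[0,\infty)$ is an AR; likewise $[0,1]$ is a retract of $\mathbb{R}$ via $x\mapsto\max\{0,\min\{x,1\}\}$.

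For (2), let $X_1,\dots,X_n$ be ANRs and let $f\colon A\to X_1\times\cdots\times X_n$ be given with $A\subseteq Y$ closed. Extend each coordinate $\pi_i\circ f$ to $F_i\colon U_i\to X_i$ on a neighborhood of $A$, set $U=\bigcap_i U_i$, and assemble $F=(F_1|_U,\dots,F_n|_U)$; if each $X_i$ is an AR take $U_i=Y$. Local compactness of the product follows from the finite case. For (3), suppose $X_1,X_2,\dots$ are ARs with $X_i$ compact for $i\geq k$; then $\prod_{i<k}X_i$ is locally compact (finite product of locally compact spaces) and $\prod_{i\geq k}X_i$ is compact by Tychonoff, so $\prod_i X_i$ is locally compact. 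Extending coordinatewise via (\dag) produces $F_i\colon Y\to X_i$ for all $i$, and the universal property of the product topology assembles these into a continuous $F\colon Y\to\prod_i X_i$ extending $f$.

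I do not anticipate a serious obstacle: every step is a direct application of Lemma~\ref{Lemma: ANE characterization} together with standard point-set facts. The only place where care is required is in (3), where one must verify local compactness before invoking the extension property; this is precisely where the hypothesis that all but finitely many factors be compact enters, since a countably infinite product of non-compact locally compact spaces typically fails to be locally compact.
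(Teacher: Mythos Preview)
Your proof is correct and follows essentially the same route as the paper: both reduce everything to the extensor characterization (Lemma~\ref{Lemma: ANE characterization}), handle products by extending coordinatewise, and handle retracts by post-composing an extension with the retraction. The only cosmetic difference is in assertion~(\ref{Assertion 1: Lots of ANRs}): the paper deduces that $[0,1)$ and $(0,1)$ are ARs via Corollaries~\ref{Corollary: AR iff contractible ANR} and~\ref{Corollary: open subsets of ANRs are ANRs} (open subsets of $[0,1]$ that are contractible), whereas you obtain them as retracts of $\mathbb{R}$ using assertion~(\ref{Assertion 4: Lots of ANRs})---either works.
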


\begin{proof}
That these properties are invariant under homeomorphism is clear. Assertion
(\ref{Assertion 1: Lots of ANRs}) follows from Lemma
\ref{Lemma: ANE characterization}, Corollaries
\ref{Corollary: AR iff contractible ANR} and
\ref{Corollary: open subsets of ANRs are ANRs}, and the Tietze Extension Theorem.

For assertion (\ref{Assertion 2: Lots of ANRs}), let $A\subseteq Y$ be closed
and $\mathbf{f=}\left(  f_{i}\right)  :A\rightarrow\prod\nolimits_{i=1}%
^{k}X_{i}$ be continuous. If each $X_{i}$ is an AR, choose extensions
$\overline{f}_{i}:Y\rightarrow X_{i}$ to get an extension $\overline
{\mathbf{f}}=\left(  \overline{f}_{i}\right)  :Y\rightarrow\prod
\nolimits_{i=1}^{k}X_{i}$. If each $X_{i}$ is an ANR, choose extensions
$\overline{f}_{i}:U_{i}\rightarrow X_{i}$, where $U_{i}$ a neighborhood of $A$
in $Y$, then let $U=\cap U_{i}$ and $\overline{\mathbf{f}}=\left(  \left.
\overline{f}_{i}\right\vert _{U}\right)  :U\rightarrow\prod\nolimits_{i=1}%
^{k}X_{i}$.

For infinite products of ARs, we must restrict to countable products to ensure
metrizability; and to ensure local compactness, only finitely many factors can
be noncompact. With those caveats, the above proof remains valid for ARs (but
fails for ANRs).

To prove assertion (\ref{Assertion 4: Lots of ANRs}), first recall that if
$r:X\rightarrow X_{0}$ is a retraction, then $X_{0}$ is closed in $X$, so
$X_{0}$ is locally compact. Now suppose $f:A\rightarrow X_{0}$ is continuous,
with $A$ a closed subset of a space $Y$. If $X$ is an AR, there is an
extension $F:Y\rightarrow X$. Then $\overline{f}=rF:Y\rightarrow X_{0}$ is an
extension, showing that $X_{0}$ is an AR. By the same approach, if $X$ is an
ANR, then so is $X_{0}$.
\end{proof}

Recall that a space $X$ is \emph{locally contractible} \emph{at }$x\in X$ if
every neighborhood $U$ of $x$ contains a neighborhood $V$, such that $V$
contracts in $U$. A space that is locally contractible at each of its points
is \emph{locally contractible}.

\begin{proposition}
Every ANR is locally contractible and every finite-dimensional locally compact
and locally contractible space is an ANR.
\end{proposition}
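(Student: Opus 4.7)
The plan is to prove the two implications separately. For the easy direction — every ANR is locally contractible — I would use the embedding of $X$ as a closed subset of $\mathbb{R}^{\infty}$ constructed in the proof of Lemma \ref{Lemma: ANE characterization}, together with a retraction $r:W\to X$ of some neighborhood $W$ of $X$ in $\mathbb{R}^{\infty}$. Given $x\in X$ and a neighborhood $U$ of $x$ in $X$, note that $r^{-1}(U)$ is an open neighborhood of $x$ in $\mathbb{R}^{\infty}$, so it contains a convex (hence contractible) open neighborhood $W_{0}$ of $x$. Let $V=W_{0}\cap X$. Composing the straight-line contraction of $W_{0}$ to $x$ with $r$ produces a contraction of $V$ inside $U$, which is the required local contraction.

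For the converse, suppose $X$ is finite-dimensional, locally compact, and locally contractible. By the classical embedding theorem for separable metric spaces of finite covering dimension, I may regard $X$ as a closed subset of some $\mathbb{R}^{n}$. By Lemma \ref{Lemma: ANE characterization}, it suffices to exhibit a retraction $r:U\to X$ of some open neighborhood $U$ of $X$ in $\mathbb{R}^{n}$ onto $X$. The strategy is the classical ``connect-the-dots'' construction: choose a locally finite triangulation $K$ of an open neighborhood $N$ of $X$ in $\mathbb{R}^{n}\setminus X$, with the property that the diameter of every simplex $\sigma$ is controlled by a small multiple of $d(\sigma,X)$ (a Whitney-type triangulation). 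For each vertex $v$ of $K$, pick $r(v)\in X$ with $d(v,r(v))$ within a bounded multiple of $d(v,X)$, then extend $r$ skeleton by skeleton, using local contractibility of $X$ to fill in each simplex whose boundary image is sufficiently small.

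The main obstacle is controlling the scales during the inductive extension. Concretely, one needs, for each point $x\in X$ and each neighborhood $U$ of $x$, a nested sequence of neighborhoods $U=U_{n}\supseteq U_{n-1}\supseteq\cdots\supseteq U_{0}\ni x$ such that the inclusion $U_{k-1}\hookrightarrow U_{k}$ is null-homotopic for each $k$. One then inductively extends $r$ over the $k$-skeleton by demanding that the image of the boundary of any $k$-simplex near $x$ lies in $U_{k-1}$, so that it bounds a singular disk in $U_{k}$. The hypothesis $\dim X\le n$ is essential here: only finitely many scales $U_{0}\subseteq\cdots\subseteq U_{n}$ are needed, so the local contractibility of $X$ suffices, and one need not iterate infinitely. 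Choosing the triangulation fine enough relative to these scales, this inductive procedure yields a continuous $r:N\to X$.

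Finally, I would verify that $r$ extends continuously across $X$ by the identity and that the result is a retraction of an open neighborhood $U=N\cup X$ onto $X$. Continuity at points of $X$ follows from the fineness of the triangulation: simplices close to $X$ have small diameter and map into small neighborhoods of $X$, so $r(y)\to r(x)=x$ as $y\to x$ in $U$. Applying Lemma \ref{Lemma: ANE characterization} then completes the proof that $X$ is an ANR.
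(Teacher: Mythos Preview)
Your proposal is correct and follows essentially the same approach as the paper's sketch: both directions use the embedding into $\mathbb{R}^{\infty}$ (resp.\ $\mathbb{R}^{n}$) together with a retraction, and for the converse both carry out the classical skeleton-by-skeleton ``connect-the-dots'' extension using local contractibility. The only organizational difference is that the paper passes to a smaller polyhedral neighborhood $N_{k+1}\subseteq N_{k}$ at each inductive step rather than fixing a single sufficiently fine Whitney-type triangulation in advance; this is a matter of bookkeeping, not substance.
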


\begin{proof}
[Sketch of proof]It is easy to see that a retract of a locally contractible
space is locally contractible. If $X$ is an ANR then there is an embedding
$X\hookrightarrow%
\mathbb{R}
^{\infty}$ as a closed subset and a retraction $r:U\rightarrow X$ of an open
neighborhood onto $X$. Since $U$ is locally contractible, so is $X$.

If $X$ is locally compact and finite-dimensional, there is a proper closed
embedding of $X\hookrightarrow%
\mathbb{R}
^{n}$ for some $n<\infty$. By Corollary
\ref{Corollary: open subsets of ANRs are ANRs} and Proposition
\ref{Proposition: lots of A[N]Rs}, it suffices to exhibit a retraction of some
neighborhood onto $X$. Choose a polyhedral neighborhood $N_{0}$ of $X$ and an
infinite triangulation of $N_{0}-X$ that gets progressively finer near $X$.
Define $r_{0}:N_{0}^{\left(  0\right)  }\cup X\rightarrow X$ by sending each
vertex of $N-X$ to a nearest point in $X$. Assume inductively that there is a
polyhedral subneighborhood $N_{k}$ of $X$ and a retraction $r_{k}%
:N_{k}^{\left(  k\right)  }\cup X\rightarrow X$, where $N_{k}^{\left(
k\right)  }$ is the $k$-skeleton of $N_{k}-X$. By the local contractibility of
$X$, there exists a polyhedral neighborhood $N_{k+1}\subseteq N_{k}$,
sufficiently small, that for each $\left(  k+1\right)  $-simplex $\sigma
^{k+1}$ of $N_{k+1}$, $\left.  r_{k}\right\vert _{\partial\sigma^{k}}$ extends
over $\sigma^{k+1}$ to a map into $X$, thereby giving a retraction
$r_{k+1}:N_{k+1}^{\left(  k+1\right)  }\cup X\rightarrow X$. The desired
retraction follows by induction.
\end{proof}

\begin{example}
Using the above observations, one sees that: every finite-dimensional or
Hilbert cube manifold; every locally finite CW, simplicial, or cube complex;
and every finite-dimensional proper CAT(0) space is an ANR. (In fact,
\cite{Ont05} shows that \emph{every} proper CAT(0) space is an ANR, hence an AR.)
\end{example}

A few deeper facts about ANRs will play a role in this paper. We state them
here without proofs.

\begin{theorem}
[Hanner's Theorem \cite{Han51}]\label{Theorem: Hanner's Theorem}A space $X$ is
an ANR if for every open cover $\mathcal{U}$ of $X$ there is an ANR $Y$ and
maps $f:X\rightarrow Y$ and $g:Y\rightarrow X$ such that $gf$ is $\mathcal{U}%
$-homotopic to $\operatorname*{id}_{X}$.
\end{theorem}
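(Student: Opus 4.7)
The plan is to verify the neighborhood extension property of Lemma~\ref{Lemma: ANE characterization}. Let $A\subseteq Z$ be closed and $\phi:A\to X$ continuous; the task is to extend $\phi$ to a continuous map defined on some open neighborhood of $A$ in $Z$.

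I would first fix a proper metric on $X$ and choose a sequence of open covers $\mathcal{U}_{1},\mathcal{U}_{2},\ldots$ of $X$ with meshes tending to zero and with each $\mathcal{U}_{n+1}$ a star-refinement of $\mathcal{U}_{n}$. Applying the hypothesis to each $\mathcal{U}_{n}$ yields an ANR $Y_{n}$, maps $f_{n}:X\to Y_{n}$ and $g_{n}:Y_{n}\to X$, and a $\mathcal{U}_{n}$-homotopy $H_{n}:X\times[0,1]\to X$ from $g_{n}f_{n}$ to $\operatorname*{id}_{X}$. Since each $Y_{n}$ is already an ANR, Lemma~\ref{Lemma: ANE characterization} extends $f_{n}\circ\phi:A\to Y_{n}$ to some map $\widetilde{F}_{n}:W_{n}\to Y_{n}$ on an open neighborhood $W_{n}$ of $A$ in $Z$. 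The composition $\psi_{n}:=g_{n}\widetilde{F}_{n}:W_{n}\to X$ then furnishes a sequence of \emph{near-extensions} of $\phi$, each satisfying $\psi_{n}|_{A}=g_{n}f_{n}\phi$, a map $\mathcal{U}_{n}$-close to $\phi$ via the track homotopy $H_{n}\circ(\phi\times\operatorname*{id})$.

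The main obstacle---and the real substance of Hanner's argument---is assembling the near-extensions $\psi_{n}$ into a single honest extension of $\phi$ on a common neighborhood $W\subseteq\bigcap_{n}W_{n}$. My approach is an annular construction: choose a nested basis $V_{1}\supseteq V_{2}\supseteq\cdots$ of neighborhoods of $A$ with $\bigcap V_{n}=A$, shrunk so that $\psi_{n}$ and $\psi_{n+1}$ are $\mathcal{U}_{n}$-close on $V_{n}$; then build the global extension shell by shell, letting it agree with $\psi_{n}$ on (most of) $V_{n-1}\setminus V_{n}$ and interpolating across each boundary $\partial V_{n}$ by a Urysohn-mediated reparametrization of $H_{n}$. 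The decisive feature---and precisely why no prior ANR structure on $X$ is required---is that the interpolating homotopies $H_{n}$ live in $X$ itself, so the patched map never leaves $X$; continuity on $W\setminus A$ comes from the local finiteness of the annular decomposition, while continuity at $A$ is forced by the shrinking mesh, since points of $V_{n}$ end up within an element of $\mathcal{U}_{n-1}$ of the nearest $\phi$-value. The delicate bookkeeping step, which I expect to be the principal technical hurdle, is coordinating the $V_{n}$ and the Urysohn parameters with the star-refinement relations among the $\mathcal{U}_{n}$ so that compound displacements from repeated interpolations remain uniformly controlled.
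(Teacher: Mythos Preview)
The paper does not give its own proof of Hanner's theorem; it is one of three ``deeper facts'' that are explicitly \emph{stated without proof} and cited to \cite{Han51}. So there is nothing in the paper to compare your argument against, and your proposal must stand on its own merits.

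Your overall architecture---produce a sequence of near-extensions $\psi_{n}=g_{n}\widetilde{F}_{n}$ and assemble them over a nested family of annuli shrinking to $A$---is indeed the classical shape of the argument. The gap is at the interpolation step. You propose to bridge $\psi_{n}$ and $\psi_{n+1}$ across $\partial V_{n}$ ``by a Urysohn-mediated reparametrization of $H_{n}$,'' but $H_{n}$ is a homotopy $X\times[0,1]\to X$ from $g_{n}f_{n}$ to $\operatorname*{id}_{X}$; precomposing with $\psi_{n}$ or $\psi_{n+1}$ yields a small homotopy from $g_{n}f_{n}\psi_{n}$ to $\psi_{n}$ (or from $g_{n}f_{n}\psi_{n+1}$ to $\psi_{n+1}$), \emph{not} a homotopy from $\psi_{n}$ to $\psi_{n+1}$. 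Being $\mathcal{U}_{n}$-close in $X$ is not by itself enough to produce a connecting homotopy, since you do not yet know $X$ is an ANR. The standard repair is to route through $Y_{n}$: after shrinking, $\widetilde{F}_{n}$ and $f_{n}\psi_{n+1}$ are close maps into the ANR $Y_{n}$, hence joined by a small homotopy there (this is exactly the ``close maps into an ANR are small-homotopic'' lemma, itself a consequence of the extension property applied to $W\times[0,1]$); pushing that homotopy back by $g_{n}$ and concatenating with the two $H_{n}$-tracks gives the required small bridge $\psi_{n}\simeq g_{n}f_{n}\psi_{n}\simeq g_{n}f_{n}\psi_{n+1}\simeq\psi_{n+1}$. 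That extra ingredient is what your outline is missing; once it is supplied, the annular patching and the star-refinement bookkeeping go through as you indicate.
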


\begin{theorem}
[West's Theorem \cite{Wes77}]\label{Theorem: West's Theorem}Every ANR is
proper homotopy equivalent to a locally finite polyhedron; every compact ANR
is homotopy equivalent to a finite polyhedron.
\end{theorem}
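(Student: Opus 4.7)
West's Theorem is a deep result that does not admit an elementary proof; my plan is to sketch the only route I know, which goes through Hilbert cube manifold theory. Let $Q=\prod_{i=1}^{\infty}[0,1]$ denote the Hilbert cube. The strategy rests on two major results: Edwards' theorem, that for any locally compact ANR $X$ the product $X\times Q$ is a Hilbert cube manifold (that is, a space locally homeomorphic to open subsets of $Q$); and Chapman's triangulation theorem, that every Hilbert cube manifold is homeomorphic to $P\times Q$ for some locally finite polyhedron $P$, with $P$ compact when the manifold is compact. Taking these as black boxes, the theorem reduces to comparing $X$ with its ``stabilized'' form $X\times Q$.

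The first step is to check that for any locally compact ANR $X$, the projection $\pi:X\times Q\to X$ is a proper homotopy equivalence, with proper homotopy inverse $x\mapsto(x,q_{0})$ for any basepoint $q_{0}\in Q$; properness is immediate because $Q$ is compact, and the homotopy $H_{t}(x,q)=(x,(1-t)q+tq_{0})$, built coordinatewise using the contractibility of $Q$, is a proper homotopy from $\operatorname{id}_{X\times Q}$ to the composition. The inclusion $x\mapsto(x,q_{0})$ is continuous and proper, and an ANR map into $X\times Q$, which is itself an ANR by Proposition~\ref{Proposition: lots of A[N]Rs}. Second, applying Edwards' theorem produces a Hilbert cube manifold structure on $X\times Q$, and Chapman's theorem then furnishes a homeomorphism $X\times Q\cong P\times Q$ for some locally finite polyhedron $P$. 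The same argument applied to $P$ shows the projection $P\times Q\to P$ is a proper homotopy equivalence. Composing yields a chain
\[
X\xrightarrow{\;\text{p.h.e.}\;} X\times Q\xrightarrow{\;\cong\;} P\times Q\xrightarrow{\;\text{p.h.e.}\;} P,
\]
which is the desired proper homotopy equivalence; the compact case follows by the same argument, since $X$ compact forces $X\times Q$ compact, hence $P$ compact, and all proper maps between compacta are automatically maps, yielding an ordinary homotopy equivalence.

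The main obstacle, and the reason this cannot be done elementarily, is of course Edwards' theorem; proving that $X\times Q$ is a Hilbert cube manifold for every locally compact ANR $X$ requires the full machinery of infinite-dimensional topology, in particular characterizations of $Q$-manifolds by their local structure and cell-like approximation theorems. Chapman's triangulation theorem is equally deep. What my sketch really does is isolate the honest topological content of West's theorem, namely the stabilization trick $X\rightsquigarrow X\times Q$, and reduce everything else to the two cited landmark results; this is consistent with the excerpt's stated intention of quoting these deep facts without proof.
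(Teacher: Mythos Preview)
The paper does not prove this theorem at all: it is one of three ``deeper facts about ANRs'' (Theorems \ref{Theorem: Hanner's Theorem}--\ref{Theorem: Edwards' Theorem}) that the authors explicitly ``state here without proofs,'' citing \cite{Wes77}. So there is no proof in the paper to compare your attempt against.

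That said, your sketch is correct and is the standard modern route to West's theorem. The reduction to Edwards' theorem (which the paper also quotes, as Theorem \ref{Theorem: Edwards' Theorem}) plus Chapman's triangulation theorem is exactly how this result is usually derived nowadays; the elementary part---that projection $X\times Q\to X$ is a proper homotopy equivalence with proper inverse $x\mapsto(x,q_0)$ and proper straight-line homotopy---is handled correctly, and your observation that compactness of $X$ forces compactness of $P$ is right. Your closing remark that the argument ultimately rests on two deep black boxes is accurate and matches the paper's own posture of simply citing the result.
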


\begin{theorem}
[Edwards' Theorem \cite{Edw80}]\label{Theorem: Edwards' Theorem}If $X$ is a
locally compact ANR, then $X\times\left[  0,1\right]  ^{\infty}$ is a Hilbert
cube manifold.
\end{theorem}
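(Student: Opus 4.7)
The plan is to reduce Edwards' Theorem to Toru\'nczyk's topological characterization of Hilbert cube manifolds: a locally compact ANR $Y$ is a Hilbert cube manifold if and only if it has the Disjoint Disks Property, meaning that for every open cover $\mathcal{U}$ of $Y$ and every pair of continuous maps $f, g : Q \to Y$ (where $Q = [0,1]^{\infty}$), there exist maps $f', g' : Q \to Y$, $\mathcal{U}$-close to $f$ and $g$ respectively, with $f'(Q) \cap g'(Q) = \emptyset$. Granted this deep characterization, Edwards' Theorem reduces to two verifications for the product $X \times Q$.

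First I would check that $X \times Q$ is a locally compact ANR. Local compactness is immediate, since $X$ is locally compact and every $[0,1]$-factor is compact. For the ANR property I would extend the proof of Proposition~\ref{Proposition: lots of A[N]Rs}: given closed $A \subseteq Y$ and a map $f = (f_X, f_Q) : A \to X \times Q$, Lemma~\ref{Lemma: ANE characterization} yields a neighborhood extension $\bar f_X : U \to X$ of $f_X$, while the coordinatewise Tietze argument used in that same lemma yields a full extension $\bar f_Q : Y \to Q$ of $f_Q$ (since $Q$, being a countable product of copies of the compact AR $[0,1]$, is itself an AR). The pair $(\bar f_X, \bar f_Q\vert_U) : U \to X \times Q$ then extends $f$.

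Next I would verify the Disjoint Disks Property. Given $f, g : Q \to X \times Q$ and a gauge $\varepsilon > 0$, write $f = (f_X, f_1, f_2, \ldots)$ and $g = (g_X, g_1, g_2, \ldots)$ in coordinates of $X \times \prod_{i \geq 1}[0,1]_i$. Choose $N$ so large that altering only the coordinates with index $> N$ of the $Q$-factor produces a perturbation of size $< \varepsilon$ in the standard metric on $X \times Q$. Define $f'$ by replacing $f_{N+1}$ with the constant $0$ and $g'$ by replacing $g_{N+1}$ with the constant $1$, leaving all other coordinates untouched. The maps $f'$ and $g'$ disagree at every point in the $(N+1)$st $Q$-coordinate, so $f'(Q) \cap g'(Q) = \emptyset$, and each is within $\varepsilon$ of the original. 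The infinite-dimensionality of the second factor is precisely what allows this separation for free.

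The chief obstacle is of course Toru\'nczyk's characterization itself, whose proof is one of the deepest results of infinite-dimensional topology; it rests on Anderson's homogeneity theorem for $Q$, Chapman's triangulation and classification theorems for Hilbert cube manifolds, and intricate near-homeomorphism machinery. A historically earlier alternative would invoke Theorem~\ref{Theorem: West's Theorem} to replace $X$ by a proper-homotopy-equivalent locally finite polyhedron $P$ (so that $P \times Q$ is a Hilbert cube manifold by classical PL techniques) and then appeal to Chapman's theorem that a locally compact ANR proper homotopy equivalent to a $Q$-manifold is itself a $Q$-manifold, once $X \times Q \simeq P \times Q$ has been identified. Either route leaves the hard work inside the cited theorems; what the argument above contributes is that, modulo those tools, the passage from $X$ to $X \times Q$ is essentially a bookkeeping exercise with the extra $Q$-coordinates.
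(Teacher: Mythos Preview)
The paper does not prove Edwards' Theorem at all; it is one of three results explicitly listed as ``deeper facts about ANRs'' that are ``state[d] here without proofs'' and simply cited to \cite{Edw80}. So there is nothing to compare against: your sketch is not an alternative to the paper's proof but a proposed filling-in of a black box the authors deliberately left closed.

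That said, your outline is mathematically sound. The verification that $X\times Q$ is a locally compact ANR is routine, and your coordinate-shift argument does establish the disjoint-cubes property required by Toru\'nczyk's characterization. You are right to flag that all the depth lies in Toru\'nczyk's theorem itself, which is of comparable difficulty to Edwards' original argument (and historically came slightly later, so the reduction is anachronistic though valid). Your alternative route via West's Theorem and Chapman's proper-homotopy-invariance theorem is closer in spirit to how Edwards' result was first obtained, but note that West's Theorem (Theorem~\ref{Theorem: West's Theorem}) is itself proved using Edwards' Theorem in most modern treatments, so one must be careful about circularity there.
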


\section{$\mathcal{Z}$-sets and $\mathcal{Z}$%
-com\-pact\-i\-fi\-ca\-tions\label{Section: Z-sets}}

A closed subset $A$ of a space $X$, is a \emph{$\mathcal{Z}$-set} if there
exists a homotopy $H:X\times\lbrack0,1]\rightarrow X$ such that $H_{0}%
=\operatorname*{id}_{X}$ and $H_{t}(X)\subset X-A$ for every $t>0$. In this
case we say that $H$ \emph{instantly homotopes }$X$ \emph{off from} $A$.

\begin{remark}
Notice that, if $A\subseteq X$ is a $\mathcal{Z}$-set, then $A$ is nowhere
dense in $X$.
\end{remark}

\begin{example}
The prototypical $\mathcal{Z}$-set is the boundary of a manifold. More
generally, a closed subset $A$ of an $n$-manifold $M^{n}$ is a $\mathcal{Z}%
$-set if and only if $A\subseteq\partial M^{n}$.
\end{example}

The following lemma generalizes \cite[Prop.1.6]{Fer00}. It will play a crucial
role in our boundary swapping theorems.

\begin{lemma}
\label{Lemma: Z-set criterion}Let $f:\left(  X,A\right)  \rightarrow\left(
Y,B\right)  $ and $g:\left(  Y,B\right)  \rightarrow\left(  X,A\right)  $ be
continuous maps with $f\left(  X-A\right)  \subseteq Y-B$, $g\left(
Y-B\right)  \subseteq X-A$, and $\left.  gf\right\vert _{A}=\operatorname*{id}%
_{A}$. Suppose further that there is a homotopy $J:X\times\left[  0,1\right]
\rightarrow X$ which is fixed on $A$ and satisfies: $J_{0}=\operatorname*{id}%
_{X}$, $J_{1}=gf$, and $J\left(  (X-A)\times\left[  0,1\right]  \right)
\subseteq X-A$. If $B$ is a $\mathcal{Z}$-set in $Y$, then $A$ is a
$\mathcal{Z}$-set in $X$.
\end{lemma}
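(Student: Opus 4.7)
The plan is to combine the given homotopy $J$ (which carries $\operatorname*{id}_X$ to $gf$ while fixing $A$) with the homotopy $gKf$, where $K\colon Y\times [0,1]\to Y$ is a homotopy instantly moving $Y$ off $B$. Neither ingredient suffices alone: $J$ cannot move $A$ off itself, whereas $g\circ K_0\circ f = gf$ fails to equal $\operatorname*{id}_X$ on $X-A$. I will blend them by a spatially varying reparametrization of time, arranged so that the $J$-phase shrinks to a point as $x$ approaches $A$.

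First note that $A = f^{-1}(B)$, which is closed in $X$ because $B$ is closed and $f$ is continuous. Since $X$ is metrizable, pick a continuous $\phi\colon X\to [0,1)$ with $\phi^{-1}(0) = A$, for instance $\phi(x) = d(x, A)/(1+d(x, A))$ for any metric $d$ on $X$. Define $H\colon X\times [0,1]\to X$ by
\[
H(x,t) = \begin{cases} J\bigl(x,\,t/\phi(x)\bigr) & \text{if } 0\leq t \leq \phi(x), \\ g\bigl(K\bigl(f(x),\,(t-\phi(x))/(1-\phi(x))\bigr)\bigr) & \text{if } \phi(x)\leq t\leq 1, \end{cases}
\]
with the convention that only the second clause applies when $\phi(x)=0$. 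The two clauses agree at $t=\phi(x)$ because $J(x,1)=gf(x)=g(K(f(x),0))$.

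Granted continuity of $H$, the remaining verifications are routine. At $t=0$: for $x\in A$ the second clause yields $g(K(f(x),0))=gf(x)=x$, while for $x\in X-A$ the first clause yields $J(x,0)=x$, so $H_0=\operatorname*{id}_X$. For $t>0$: if $x\in A$, then $K(f(x),t)\in Y-B$, hence $H(x,t)\in g(Y-B)\subseteq X-A$. If $x\in X-A$ and $0<t\leq\phi(x)$, then $H(x,t)=J(x,t/\phi(x))\in X-A$ because $J$ carries $(X-A)\times [0,1]$ into $X-A$. If $x\in X-A$ and $t\geq\phi(x)$, then either the renormalized $K$-time is positive (forcing $K$'s value into $Y-B$) or it equals $0$, in which case $H(x,\phi(x))=gf(x)\in g(Y-B)\subseteq X-A$ since $f(x)\in Y-B$.

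The main obstacle is continuity of $H$ along $A\times [0,1]$, where $\phi$ vanishes and the two-clause definition collapses. Let $(x_n,t_n)\to (x,t)$ with $x\in A$; then $\phi(x_n)\to 0$. If $t>0$, then eventually $t_n>\phi(x_n)$, the second clause applies, and continuity is inherited from $f$, $g$, and $K$ because the renormalized $K$-argument converges to $t$. If $t=0$, split into sub-cases: when $t_n\leq \phi(x_n)$, the path $J(x,\cdot)$ is constantly $x$ (because $J$ fixes $A$), so joint continuity of $J$ on $X\times [0,1]$ forces $J(x_n,t_n/\phi(x_n))\to x$; when $t_n>\phi(x_n)$, one has $(t_n-\phi(x_n))/(1-\phi(x_n))\to 0$, yielding $H(x_n,t_n)\to gf(x)=x$. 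Thus $H$ witnesses that $A$ is a $\mathcal{Z}$-set in $X$.
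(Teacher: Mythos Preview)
Your proof is correct and follows essentially the same approach as the paper: both concatenate the $J$-path with the $gKf$-path, using a continuous cutoff function vanishing exactly on $A$ (your $\phi(x)=d(x,A)/(1+d(x,A))$ versus the paper's $r_x=\min\{d(x,A),1/2\}$) to reparametrize so that the $J$-phase collapses near $A$. Your continuity verification at $A\times[0,1]$ is more explicit than the paper's, which instead records the key fact as an observation that $\operatorname{diam}(J(\{x\}\times[0,1]))\to 0$ as $d(x,A)\to 0$; the content is the same.
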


\begin{proof}
Since $A=g^{-1}\left(  B\right)  $, $A$ is closed in $X$. Choose
$K:Y\times\left[  0,1\right]  \rightarrow Y$ that instantly homotopes $Y$ off
from $B$. We will construct $H:X\times\left[  0,1\right]  \rightarrow X$,
which instantly homotopes $X$ off from $A$, by describing the track of each
$x\in X$.

For each $x\in X$ define $\alpha_{x}:\left[  0,1\right]  \rightarrow X$ by
$\alpha_{x}\left(  t\right)  =J_{t}\left(  x\right)  $ and $\beta_{x}:\left[
0,1\right]  \rightarrow X$ by $\beta_{x}\left(  t\right)  =gK_{t}\left(
f\left(  x\right)  \right)  $. Note that

\begin{enumerate}
\item $\alpha_{x}\left(  0\right)  =x$ and $\alpha_{x}\left(  1\right)
=gf\left(  x\right)  =\beta_{x}\left(  0\right)  $,

\item \label{Observation about points of X-A}if $x\in X-A$ then $\alpha_{x}$,
$\beta_{x}\subseteq X-A$,

\item \label{Observation about points of A}if $x\in A$ then $\alpha_{x}\equiv
x$, $\beta_{x}\left(  0\right)  =x$, and $\beta_{x}\left(  (0,1]\right)
\subseteq X-A$, and

\item \label{Observation on diam of alpha}$\operatorname*{diam}\left(
\alpha_{x}\right)  \rightarrow0$ as $d\left(  x,A\right)  \rightarrow0$.
\end{enumerate}

The track $\gamma_{x}$ of $x$ under $H$ will follow the concatenation
$\alpha_{x}\cdot\beta_{x}$, but in order for points to be instantly homotoped
off from $A$, reparameterizations are necessary. Let $r_{x}=\min\left\{
d\left(  x,A\right)  ,\frac{1}{2}\right\}  $ and define $\gamma_{x}:\left[
0,1\right]  \rightarrow X$ as follows:%

\[
\gamma_{x}\left(  t\right)  =\left\{
\begin{tabular}
[c]{ll}%
$\alpha_{x}\left(  t/r_{x}\right)  $ & if $0\leq t<r_{x}$\\
$\beta_{x}\left(  t-r_{x}/1-r_{x}\right)  $ & if $r_{x}\leq t\leq1$%
\end{tabular}
\ \right.
\]

\noindent Observation \ref{Observation on diam of alpha} and the fact that
$\alpha_{x}$, $\beta_{x}$, and $r_{x}$ vary continuously with $x$ combine to
show that $\gamma_{x}$ varies continuously with $x$. Define $H\left(
x,t\right)  =\gamma_{x}\left(  t\right)  $ and apply Observations
\ref{Observation about points of X-A} and \ref{Observation about points of A}
to deduce that $H$ instantly homotopes $X$ off from $A$.
\end{proof}

A \emph{$\mathcal{Z}$-com\-pact\-i\-fi\-ca\-tion} of a space $Y$ is a
com\-pact\-i\-fi\-ca\-tion $\overline{Y}$ such that $Z\equiv\overline{Y}-Y$ is
a $\mathcal{Z}$-set in $\overline{Y}$. In that case we call $Z$ a
\emph{$\mathcal{Z}$-boundary} for $Y$. Here we follow standard convention for
com\-pact\-i\-fi\-ca\-tions (\cite{Mun00} or \cite{Dug78}) by requiring
$\overline{Y}$ to be Hausdorff and $Y$ to be dense in $\overline{Y}$. Under
that convention: $Y$ must be locally compact and Hausdorff; $Y$ is necessarily
open in $\overline{Y}$; and $Z$ is compact and nowhere dense.

Lemmas \ref{Lemma: metrizability of Z-compactifications} and
\ref{Lemma: Z-compactifications are ANRs} allow us to stay within preferred
categories of spaces when taking $\mathcal{Z}$-com\-pact\-i\-fi\-ca\-tions.

\begin{lemma}
\label{Lemma: metrizability of Z-compactifications}If $\overline{Y}$ is a
\emph{$\mathcal{Z}$}-com\-pact\-i\-fi\-ca\-tion of a (separable metrizable)
space $Y$, then $\overline{Y}$ is also separable and metrizable.
\end{lemma}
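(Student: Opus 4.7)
The plan is to embed $\overline{Y}$ into an ambient metric space by exploiting the $\mathcal{Z}$-set homotopy. Since the base space $Y$ is already separable and metrizable, Urysohn's embedding theorem supplies an embedding $e\colon Y\hookrightarrow Q$ into the Hilbert cube $Q=[0,1]^{\mathbb{N}}$. While $e$ itself need not extend continuously to $\overline{Y}$, the $\mathcal{Z}$-set condition gives a homotopy $H\colon\overline{Y}\times[0,1]\to\overline{Y}$ with $H_{0}=\operatorname{id}_{\overline{Y}}$ and $H_{t}(\overline{Y})\subseteq Y$ for every $t>0$, so each slice $H_{1/n}\colon\overline{Y}\to Y$ is a continuous map landing in the domain of $e$. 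Bundling these together, I would define
\[
\phi\colon \overline{Y}\longrightarrow Q^{\mathbb{N}},\qquad \phi(x)=\bigl(e\circ H_{1/n}(x)\bigr)_{n\geq 1}.
\]

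Continuity of $\phi$ is immediate since each coordinate function is a composition of continuous maps. For injectivity, I would argue as follows: if $\phi(x)=\phi(y)$, then injectivity of $e$ forces $H_{1/n}(x)=H_{1/n}(y)$ for every $n$, and letting $n\to\infty$ in the continuous homotopy $H$ recovers $x=H_{0}(x)=H_{0}(y)=y$. Because $\overline{Y}$ is compact and $Q^{\mathbb{N}}$ is Hausdorff, $\phi$ is automatically a topological embedding into a (separable) metric space, and $\overline{Y}$ thereby inherits a compatible metric. Separability of $\overline{Y}$ is then automatic from density of the separable subspace $Y$.

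The only step that needs genuine care is the injectivity of $\phi$: it depends on using the $\mathcal{Z}$-set property at positive times (to push all of $\overline{Y}$, including the boundary points, into $Y$ where $e$ can be applied) while simultaneously using continuity of $H$ at $t=0$ (to recover each point of $\overline{Y}$ as the limit of its $H_{1/n}$-images). Everything else is routine: Urysohn's embedding of a separable metric space into $Q$, the fact that countable products of metric spaces are metrizable in the product topology, and the standard closed-map lemma for continuous injections from compact spaces into Hausdorff ones.
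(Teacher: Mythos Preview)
Your proof is correct and complete. The injectivity step is handled properly: continuity of $H$ implies sequential continuity, so $H_{1/n}(x)\to H_0(x)=x$ in $\overline{Y}$, and since $\overline{Y}$ is Hausdorff (by the standing convention on compactifications), sequential limits are unique, giving $x=y$.

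Your approach differs from the paper's, though both rest on the same underlying idea that the slice maps $H_{1/n}\colon\overline{Y}\to Y$ collectively distinguish all points of $\overline{Y}$. The paper invokes the Urysohn Metrization Theorem directly: $\overline{Y}$ is compact Hausdorff, hence normal, and a countable basis is manufactured by pulling back a countable basis $\mathcal{B}_0$ of $Y$ under the maps $H_{1/i}$, taking $\overline{\mathcal{B}}=\bigcup_{i\ge 0}\{H_{1/i}^{-1}(B):B\in\mathcal{B}_0\}$. You instead package the same maps into an embedding $\phi\colon\overline{Y}\hookrightarrow Q^{\mathbb{N}}$ and read off metrizability from the target. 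Your route trades the verification that $\overline{\mathcal{B}}$ is genuinely a basis (rather than merely a subbasis) for a short injectivity-plus-compactness argument; the paper's route avoids the auxiliary embedding $e$ and the product space. Both are equally elementary and of comparable length.
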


\begin{proof}
For metrizability, we apply the Urysohn Metrization Theorem \cite[p.214]%
{Mun00}. Since $\overline{Y}$ is compact and Hausdorff it is a normal space,
so we need only check that there is a countable basis for its topology. Being
separable and metrizable, $Y$ admits a countable basis $\mathcal{B}_{0}$. For
each integer $i>0$, let $\mathcal{B}_{i}=\left\{  H_{1/i}^{-1}\left(
B\right)  \mid B\in\mathcal{B}_{0}\right\}  $ and note that $\overline
{\mathcal{B}}=\cup_{i\geq0}\mathcal{B}_{i}$ is a countable basis for
$\overline{Y}$.

Since $\overline{Y}$ is compact and metrizable, it is separable.
\end{proof}

\begin{lemma}
\label{Lemma: Z-compactifications are ANRs}If $\overline{Y}$ is a
\emph{$\mathcal{Z}$}-com\-pact\-i\-fi\-ca\-tion of an ANR $Y$, then
$\overline{Y}$ is an ANR. If $Y$ is an AR, then so is $\overline{Y}$.
\end{lemma}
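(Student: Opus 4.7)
The plan is to apply Hanner's Theorem (Theorem~\ref{Theorem: Hanner's Theorem}) with $Y$ itself playing the role of the auxiliary ANR, and then to invoke Corollary~\ref{Corollary: AR iff contractible ANR} to derive the AR conclusion from a concrete contraction of $\overline{Y}$.

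For the ANR part, I would fix an arbitrary open cover $\mathcal{U}$ of $\overline{Y}$ and exploit the $\mathcal{Z}$-set homotopy $H:\overline{Y}\times[0,1]\to\overline{Y}$ with $H_0=\operatorname*{id}_{\overline{Y}}$ and $H_t(\overline{Y})\subseteq Y$ for every $t>0$ supplied by the hypothesis that $Z=\overline{Y}-Y$ is a $\mathcal{Z}$-set. Since $\overline{Y}$ is compact (and metrizable, by Lemma~\ref{Lemma: metrizability of Z-compactifications}), continuity of $H$ combined with a finite-subcover argument applied to the pullbacks $\{H^{-1}(U):U\in\mathcal{U}\}$ produces some $t_0\in(0,1]$ such that for every $x\in\overline{Y}$ the track $H(\{x\}\times[0,t_0])$ lies entirely in a single member of $\mathcal{U}$. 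Setting $f=H_{t_0}:\overline{Y}\to Y$ and letting $g:Y\hookrightarrow\overline{Y}$ be the inclusion, the composite $gf=H_{t_0}$ is $\mathcal{U}$-homotopic to $\operatorname*{id}_{\overline{Y}}$ via the reparameterization $(x,s)\mapsto H(x,st_0)$. Since $Y$ is an ANR, Hanner's Theorem yields that $\overline{Y}$ is an ANR.

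For the AR part, assume additionally that $Y$ is an AR. By Corollary~\ref{Corollary: AR iff contractible ANR}, $Y$ is contractible; fix a contraction $c:Y\times[0,1]\to Y$ with $c_0=\operatorname*{id}_Y$ and $c_1\equiv y_0$. I would then concatenate $c$ with the $\mathcal{Z}$-set homotopy $H$ to obtain a contraction of $\overline{Y}$ to $y_0$, namely
\[
C(x,t)=\begin{cases}H(x,2t) & 0\leq t\leq 1/2,\\ c(H_1(x),\,2t-1) & 1/2\leq t\leq 1,\end{cases}
\]
which is well defined because $H_1(\overline{Y})\subseteq Y$. A second application of Corollary~\ref{Corollary: AR iff contractible ANR}, combined with the ANR conclusion just established, then gives that $\overline{Y}$ is an AR.

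The only real point of care is the extraction of a sufficiently small $t_0$ so that $H|_{\overline{Y}\times[0,t_0]}$ moves points within members of $\mathcal{U}$; this is precisely where compactness of $\overline{Y}$ is used, and it is what allows Hanner's criterion to deliver the ANR conclusion despite the fact that local contractibility of $\overline{Y}$ at points of $Z$ is not \emph{a priori} available. Everything else is a routine splicing of the $\mathcal{Z}$-set homotopy with the pre-existing structure on $Y$, and I do not expect any further obstacle.
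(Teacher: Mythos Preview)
Your proposal is correct and follows essentially the same route as the paper's proof: invoke Hanner's Theorem using $Y$ as the auxiliary ANR for the first assertion, then concatenate the $\mathcal{Z}$-set homotopy with a contraction of $Y$ and apply Corollary~\ref{Corollary: AR iff contractible ANR} for the second. You have simply supplied the details (the compactness argument producing $t_0$ and the explicit concatenation) that the paper leaves to the reader.
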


\begin{proof}
The fact that $\overline{Y}$ is an ANR is a straight forward consequence of
Theorem \ref{Theorem: Hanner's Theorem}. For the latter observation, use the
definition of $\mathcal{Z}$-set to homotope $\overline{Y}$ into $Y$, then
follow that homotopy with a contraction of $Y$ to obtain a contraction of
$\overline{Y}$. Now apply Corollary \ref{Corollary: AR iff contractible ANR}.
\end{proof}

\begin{lemma}
\label{Lemma: Z-sets in subspaces}Let $N$ be a neighborhood of a
\emph{$\mathcal{Z}$}-set $A$ in an ANR $X$. Then $A$ is a \emph{$\mathcal{Z}$%
}-set in $N$.
\end{lemma}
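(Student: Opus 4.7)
The plan is to reparametrize the given $\mathcal{Z}$-set homotopy of $X$ so that its tracks remain inside $N$ while still pushing $N$ off $A$ instantly. I may assume $N$ is open (replacing $N$ by $\mathrm{int}(N)$ if necessary); then $A$ is closed in $N$. Fix a homotopy $H:X\times[0,1]\to X$ with $H_{0}=\mathrm{id}_{X}$ and $H_{t}(X)\cap A=\emptyset$ for all $t>0$.

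For each $x\in N$ I will consider the ``escape time''
\[
\tau(x)=\sup\{\,s\in[0,1]:H(\{x\}\times[0,s])\subseteq N\,\}.
\]
Since $N$ is open and $H(x,0)=x\in N$, we have $\tau(x)>0$. A compactness argument shows $\tau$ is lower semicontinuous on $N$: if $\tau(x_{0})>c$, then the compact slab $\{x_{0}\}\times[0,c]$ lies in the open preimage $H^{-1}(N)$, so a product neighborhood $V\times[0,c]$ of it does too, forcing $\tau\geq c$ on $V$.

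The technical heart of the argument will be converting the lower semicontinuous function $\tau$ into a continuous one. Because $N$ is metrizable, hence paracompact, the standard insertion theorem for positive lower semicontinuous functions produces a continuous $\epsilon:N\to(0,1]$ with $0<\epsilon(x)<\tau(x)$ for all $x\in N$. (Explicitly, $\epsilon$ may be built from a partition of unity subordinate to the open cover $\{U_{n}=\tau^{-1}(1/n,\infty]\}_{n\geq1}$, with each piece weighted by $1/(2n)$.) This is the step I expect to be the main obstacle in the write-up, though it is a classical construction.

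With $\epsilon$ in hand I will define
\[
H'(x,t)=H(x,\epsilon(x)\,t),\qquad(x,t)\in N\times[0,1].
\]
Continuity is immediate. For every $t\in[0,1]$, $\epsilon(x)t\in[0,\tau(x))$, so $H'(x,t)\in N$; moreover $H'(x,0)=x$, while for $t>0$ one has $\epsilon(x)t>0$, forcing $H'(x,t)\in X\setminus A$. Thus $H'$ instantly homotopes $N$ off $A$, which establishes that $A$ is a $\mathcal{Z}$-set in $N$.
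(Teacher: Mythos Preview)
Your escape-time reparametrization is correct and takes a genuinely different, more elementary route than the paper. The paper proceeds via the Homotopy Extension Property: it chooses nested open sets $A\subseteq U\subseteq\overline{U}\subseteq U'\subseteq\overline{U'}\subseteq\operatorname{int}N$, truncates $H$ at some small $t_{0}$ over $\overline{U}$ so that the image stays in $U'$, declares the homotopy to be the identity on $(N-U')\times[0,1]$, and then invokes HEP (available because $\operatorname{int}N$ is an ANR) to interpolate across the gap, finally extending by the identity over the frontier of $N$. Your argument bypasses HEP entirely and in fact never uses the ANR hypothesis on $X$---only metrizability, to obtain paracompactness and the insertion of a continuous $\epsilon$ below the lower semicontinuous $\tau$. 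So your proof is both shorter and strictly more general.

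One minor caveat: the opening reduction ``I may assume $N$ is open'' is not literally valid, since the conclusion of the lemma concerns $N$ itself rather than $\operatorname{int}N$, and a $\mathcal{Z}$-set homotopy on $\operatorname{int}N$ does not automatically extend to one on $N$. This is easy to repair within your framework. Run your construction on $\operatorname{int}N$ to obtain $\epsilon$, choose a Urysohn function $\mu:N\to[0,1]$ with $\mu\equiv 1$ on $A$ and $\mu\equiv 0$ on the frontier $N\setminus\operatorname{int}N$ (these are disjoint closed subsets of $N$), and set $H'(x,t)=H\bigl(x,\mu(x)\epsilon(x)t\bigr)$, interpreted as $H'(x,t)=x$ on the frontier. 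Continuity at frontier points holds because $H$ is defined on all of $X$ and $\mu\epsilon\to 0$ there; the image still lies in $N$ since $\mu\epsilon\leq\epsilon<\tau$; and for $t>0$ the homotopy misses $A$ because frontier points and points where $\mu$ vanishes both lie outside $A$. The paper has to make an analogous ``extend by the identity over the frontier'' move at the end of its argument as well.
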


\begin{proof}
Let $H:X\times\lbrack0,1]\rightarrow X$ instantly homotope $X$ off from $A$,
and choose open sets $U,U^{\prime}\subseteq X$ such that%
\[
A\subseteq U\subseteq\overline{U}\subseteq U^{\prime}\subseteq\overline
{U^{\prime}}\subseteq\operatorname*{int}N\text{.}%
\]
By truncating $H$ at a time $0<t_{0}\leq1$ then reparameterizing, we may
obtain a homotopy $K:\overline{U}\times\left[  0,1\right]  \rightarrow
\operatorname*{int}N$ such that $K\left(  \overline{U}\times\left[
0,1\right]  \right)  \subseteq U^{\prime}$. Extend $K$ to the trivial
(constantly identity) homotopy on $(N-U^{\prime})\times\left[  0,1\right]  $
and the identity map on $N\times\left\{  0\right\}  $. By Corollary
\ref{Corollary: open subsets of ANRs are ANRs}, we can apply the Homotopy
Extension Property to obtain $K:\operatorname*{int}N\times\left[  0,1\right]
\rightarrow\operatorname*{int}N,$ after which we can extend to the identity
over the frontier of $N$. Moreover, by choosing a sufficiently small
neighborhood $V$ of $(\operatorname*{int}N-U^{\prime})\cup\overline{U}$ (as
was used in the proof of the Homotopy Extension Property) we can arrange that
no track of $K$ passes through $A$.
\end{proof}

\begin{remark}
\label{Remark: equivalent formulation of Z-set}If one restricts attention to
ANRs, Lemmas \ref{Lemma: Z-compactifications are ANRs} and
\ref{Lemma: Z-sets in subspaces} allow a variety of equivalent formulations of
$\mathcal{Z}$-set and $\mathcal{Z}$-com\-pact\-i\-fi\-ca\-tion. For example, a
closed subset $A$ of an ANR $X$ is a $\mathcal{Z}$-set if and only if
$U-A\hookrightarrow U$ is a homotopy equivalence for every open set $U$ in
$Y$. See \cite{Hen75}.
\end{remark}

The next definition blends topology and geometry in that the specific metric
on $X$ plays a role.

\begin{definition}
\label{Definition: controlled Z-compactification}A \emph{controlled
}$\mathcal{Z}$\emph{-com\-pact\-i\-fi\-ca\-tion }of a proper metric space
$\left(  Y,d\right)  $ is a $\mathcal{Z}$-com\-pact\-i\-fi\-ca\-tion
$\overline{Y}$ satisfying the additional condition:\medskip

\noindent\emph{(\ddag)} For every $R>0$ and every open cover $\mathcal{U}$ of
$\overline{Y}$, there is a compact set $C\subset Y$ so that if $A\subseteq
Y-C$ and $\operatorname*{diam}_{d}A<R$, then $A\subseteq U$ for some
$U\in\mathcal{U}$.
\end{definition}

\begin{example}
The (standard) com\-pact\-i\-fi\-ca\-tion of hyperbolic $n$-space, by addition
of an end point to each ray emanating from the origin, is a controlled
$\mathcal{Z}$-com\-pact\-i\-fi\-ca\-tion; so is the analogous
com\-pact\-i\-fi\-ca\-tion of Euclidean $n$-space. More generally, adding the
visual sphere at infinity, with the cone topology, to a proper CAT(0) space is
a controlled $\mathcal{Z}$-com\-pact\-i\-fi\-ca\-tion.

The com\-pact\-i\-fi\-ca\-tion $\overline{%
\mathbb{R}
}=%
\mathbb{R}
\cup\left\{  \pm\infty\right\}  $ is a controlled $\mathcal{Z}$%
-com\-pact\-i\-fi\-ca\-tion of $%
\mathbb{R}
$. By contrast, $\overline{%
\mathbb{R}
}\times\overline{%
\mathbb{R}
}$ is a $\mathcal{Z}$-com\-pact\-i\-fi\-ca\-tion of $%
\mathbb{R}
^{2}$, but not a controlled $\mathcal{Z}$-com\-pact\-i\-fi\-ca\-tion (under
the Euclidean metric).
\end{example}

\begin{remark}
By Lemma \ref{Lemma: metrizability of Z-compactifications}, we may place a
metric $\overline{d}$ on $\overline{Y}$. Then, using Lebesgue numbers, (\ddag)
can be reformulated as: \medskip

\noindent(\ddag$^{\prime}$) For every $R>0$ and $\varepsilon>0$, there is a
compact $C\subset Y$ so that if $A\subseteq Y-C$ and $\operatorname*{diam}%
_{d}A<R$, then $\operatorname*{diam}_{\overline{d}}A<\varepsilon$.\medskip

Except for an impact on the size of $C$, the specific $\overline{d}$ chosen is
unimportant; moreover, there is no direct relationship between $d$ and
$\overline{d}$. The following useful lemma highlights the difference between
those metrics.
\end{remark}

\begin{lemma}
\label{Lemma: local property of controlled Z-compactifications}Suppose
$\overline{Y}=Y\cup Z$ is a controlled $\mathcal{Z}$%
-com\-pact\-i\-fi\-ca\-tion of $Y$. For each $z\in Z$, each neighborhood
$\overline{U}$ of $z$ in $\overline{Y}$, and each $R>0$, there is a
neighborhood $\overline{V}$ of $z$ so that $d_{Y}(V,Y-U)\geq R$.

\begin{proof}
Place a metric $\overline{d}$ on $\overline{Y}$, then choose a neighborhood
$\overline{U^{\prime}}$ of $z$ whose closure is contained in the interior of
$\overline{U}$ and let $\epsilon=\overline{d}(\overline{U^{\prime}}%
,\overline{Y}-\overline{U})$. From the control condition there is a
$C\subseteq Y$ so that sets in $Y-C$ of $d$-diameter less than $R$ have
$\overline{d}$-diameter less than $\epsilon/2$. Set $\overline{V}%
=\overline{U^{\prime}}-N_{R}\left(  C\right)  $, where $N_{R}\left(  C\right)
$ is the closed $R$-neighborhood of $C$, and suppose there exist $x\in V$ and
$y\in Y-U$ with $d(x,y)<R$. Then $y\in Y-C$, so $\{x,y\}\subseteq Y-C$ and
$\operatorname*{diam}_{d}\{x,y\}<R$. By the control condition $\overline
{d}(x,y)<\epsilon/2$, a contradiction.
\end{proof}
\end{lemma}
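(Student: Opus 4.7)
The plan is to exploit the reformulated control condition (\ddag$^{\prime}$), which is the only bridge between the ambient metric $\overline{d}$ on $\overline{Y}$ and the proper metric $d$ on $Y$. The conceptual difficulty is that these two metrics are a priori unrelated: we are asked to translate a purely topological neighborhood statement into a metric separation statement for $d$, even though $d$ is entirely blind to $\overline{Y}$. The control condition converts small $d$-diameter into small $\overline{d}$-diameter outside a compactum, which is exactly what we need to leverage.

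First, using Lemma \ref{Lemma: metrizability of Z-compactifications}, I fix a metric $\overline{d}$ on $\overline{Y}$. Then I pick an open neighborhood $\overline{U^{\prime}}$ of $z$ whose closure lies in the interior of $\overline{U}$, so that $\epsilon:=\overline{d}(\overline{U^{\prime}},\overline{Y}-\overline{U})>0$ by compactness of the two disjoint closed sets in $\overline{Y}$. Invoking (\ddag$^{\prime}$) with threshold $\epsilon/2$, I obtain a compact $C\subseteq Y$ so that any subset of $Y-C$ of $d$-diameter less than $R$ has $\overline{d}$-diameter less than $\epsilon/2$.

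Next, I set $\overline{V}=\overline{U^{\prime}}-N_{R}(C)$, where $N_{R}(C)$ is the closed $R$-neighborhood of $C$ in $(Y,d)$. Because $(Y,d)$ is proper and $C$ is compact, $N_{R}(C)$ is a bounded closed subset of $Y$, hence compact, hence closed in $\overline{Y}$. Since $z\in\overline{U^{\prime}}$ and $N_{R}(C)\subseteq Y$ misses $z\in Z$, the set $\overline{V}$ is an open neighborhood of $z$. For the key claim, I argue by contradiction: if there were $x\in V$ and $y\in Y-U$ with $d(x,y)<R$, then $d(x,C)>R$ would force $y\notin C$ as well, so $\{x,y\}\subseteq Y-C$ with $d$-diameter less than $R$. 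The control condition would then give $\overline{d}(x,y)<\epsilon/2$, contradicting $\overline{d}(x,y)\geq\overline{d}(\overline{U^{\prime}},\overline{Y}-\overline{U})=\epsilon$.

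The main obstacle, as noted, is psychological rather than technical: one must resist the temptation to relate $d$ and $\overline{d}$ directly, and instead route the argument through the compactum $C$ supplied by (\ddag$^{\prime}$). The only genuine technical point is verifying that $N_{R}(C)$ is compact in $Y$, which uses properness in an essential way and ensures that removing it from $\overline{U^{\prime}}$ leaves an honest open neighborhood of $z$.
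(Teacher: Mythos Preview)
Your proof is correct and follows essentially the same approach as the paper's: choose $\overline{U'}$ with closure inside $\overline{U}$, set $\epsilon=\overline{d}(\overline{U'},\overline{Y}-\overline{U})$, invoke the control condition to get $C$, and take $\overline{V}=\overline{U'}-N_R(C)$, deriving a contradiction from any pair $x\in V$, $y\in Y-U$ with $d(x,y)<R$. Your version is slightly more detailed in that you explicitly verify $\overline{V}$ is an open neighborhood of $z$ (using properness of $(Y,d)$ to see that $N_R(C)$ is compact, hence closed in $\overline{Y}$), a point the paper leaves implicit.
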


\section{Preliminaries from geometric group theory and metric
geometry\label{Section: Group theory and metric geometry}}

Through the remainder of this paper, functions (also called maps) are not
always continuous. When continuity is assumed or required, it will be done
explicitly. Since the concepts presented here are geometric, all spaces are
assumed to come with a fixed metric. We use $B_{d}\left(  x,r\right)  $ and
$B_{d}\left[  x,r\right]  $ to denote open and closed metric balls, respectively.

\begin{definition}
Let $X$ be a space, $\mathcal{A}$ a collection of subsets of $X$, and
$D\subseteq X$.

\begin{enumerate}
\item $\mathcal{A}$ is \emph{locally discrete }if each $x\in X$ has a
neighborhood intersecting at most one element of $\mathcal{A}$

\item $\mathcal{A}$ is \emph{uniformly bounded }if the set $\left\{
\operatorname*{diam}\left(  A\right)  \mid A\in\mathcal{A}\right\}  $ is
bounded above, and

\item $D$ is \emph{large-scale dense} (also called \emph{quasi-dense}) if
$\left\{  d\left(  x,D\right)  \mid x\in X\right\}  $ is bounded above.
\end{enumerate}
\end{definition}

\begin{definition}
Let $f,g:X\rightarrow Y$ be functions. Then

\begin{enumerate}
\item $f$ is \emph{large-scale surjective} if $f\left(  X\right)  $ is
large-scale dense in $Y$,

\item $f$ is \emph{metrically proper} if the pre-image of every bounded subset
of $Y$ is a bounded in $X$,

\item $f$ is \emph{large-scale uniform} (also called bornologous) if, for
every $R>0$, there is an $S>0$ so that if $d_{X}(x,x^{\prime})<R$, then
$d_{Y}(f(x),f(x^{\prime}))<S$,

\item $f$ and $g$ are \emph{boundedly close} if $\left\{  \operatorname*{diam}%
\left(  f\left(  x\right)  ,g\left(  x\right)  \right)  \mid x\in X\right\}  $
is bounded above.
\end{enumerate}
\end{definition}

We reserve the term \emph{proper} for its traditional meaning: a continuous
function $f:X\rightarrow Y$ for which $f^{-1}(C)$ is compact whenever $C$ is
compact. Similarly, \emph{homotopy} always indicates a continuous function. A
\emph{bounded homotopy }$H:X\times\left[  0,1\right]  \rightarrow Y$ is one
for which the collection $\left\{  H\left(  \{x\}\times\left[  0,1\right]
\right)  \mid x\in X\right\}  $ is uniformly bounded. A continuous function
$f:X\rightarrow Y$ is a \emph{proper homotopy equivalence} if there exists a
continuous $g:Y\rightarrow X$ such that $gf$ and $fg$ are homotopic, via
proper homotopies, to $\operatorname*{id}_{X}$ and $\operatorname*{id}_{Y}$,
respectively. The following is immediate.

\begin{lemma}
Let $f,g:X\rightarrow Y$ be functions between proper metric spaces. Then

\begin{enumerate}
\item if $f$ and $g$ are boundedly close and $f$ is metrically proper, then so
is $g$,

\item if $f$ and $g$ are large-scale uniform and boundedly close over a
large-scale dense subset of $X$, then $f$ and $g$ are boundedly close,

\item $f$ is metrically proper and continuous if and only if $f$ is proper, and

\item if $H:X\times\left[  0,1\right]  \rightarrow Y$ is a bounded homotopy,
then $H$ is proper if and only if $H_{t}$ is proper for some [resp., all] $t$.
\end{enumerate}
\end{lemma}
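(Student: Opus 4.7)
The plan is to verify each of the four assertions directly from the definitions; the only nontrivial ingredient is that $X$ and $Y$ are proper metric spaces, so every bounded closed set is compact. I expect (4) to require the most care, since it must combine the bound on the diameters of tracks with the properness of a single time slice.

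For (1), I would pick a bounded $B\subseteq Y$ and a constant $K$ witnessing bounded closeness, and observe that $g^{-1}(B)\subseteq f^{-1}(N_{K}(B))$. Since $N_{K}(B)$ remains bounded in $Y$ and $f$ is metrically proper, $g^{-1}(B)$ is bounded. For (2), let $D\subseteq X$ be $R$-dense with $d_{Y}(f(d),g(d))\le K$ on $D$. Using the large-scale uniformity of $f$ and $g$, choose $S$ so that $d_{X}(x,x')\le R$ forces both $d_{Y}(f(x),f(x'))\le S$ and $d_{Y}(g(x),g(x'))\le S$. Then for arbitrary $x\in X$, a triangle-inequality sandwich through a nearby $d\in D$ yields $d_{Y}(f(x),g(x))\le 2S+K$.

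For (3), the forward direction notes that a compact $C\subseteq Y$ is bounded, so $f^{-1}(C)$ is closed by continuity of $f$ and bounded by metric properness, hence compact since $X$ is proper. Conversely, if $f$ is proper and $B\subseteq Y$ is bounded, then $B$ lies in a compact closed ball, whose preimage is compact and therefore bounded, so $f^{-1}(B)$ is bounded.

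Finally, for (4), if $H$ is proper then for each $t$ and compact $C\subseteq Y$ the set $H_{t}^{-1}(C)$ is the image under the projection $X\times\{t\}\to X$ of the compact set $H^{-1}(C)\cap(X\times\{t\})$, hence compact. For the converse, suppose $H$ is $K$-bounded and $H_{t_{0}}$ is proper. Given compact $C\subseteq Y$ and any $(x,t)\in H^{-1}(C)$, the track bound gives $H(x,t_{0})\in \overline{N_{K}(C)}$, which is relatively compact in the proper metric space $Y$. Consequently $H^{-1}(C)$ is a closed subset of the compact cylinder $H_{t_{0}}^{-1}(\overline{N_{K}(C)})\times[0,1]$ and is therefore compact, so $H$ is proper.
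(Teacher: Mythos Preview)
Your proof is correct; each of the four verifications is the natural one and goes through without issue. The paper itself does not supply a proof, declaring the lemma ``immediate,'' so there is no alternative argument to compare against---your write-up simply fills in the routine details the authors omitted.
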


The next set of definitions provides useful generalizations of
\textquotedblleft quasi-isometric embedding\textquotedblright\ and
\textquotedblleft quasi-isometry\textquotedblright.

\begin{definition}
A map $f:X\rightarrow Y$ is:

\begin{enumerate}
\item \emph{coarse} if it is metrically proper and large-scale uniform;

\item a \emph{coarse equivalence} if it has a coarse inverse, i.e., a coarse
map $g:Y\rightarrow X$ such that $gf$ and $fg$ are boundedly close to
$\operatorname*{id}_{X}$ and $\operatorname*{id}_{Y}$;

\item a \emph{coarse embedding} if $f:X\rightarrow f\left(  X\right)  $ is a
coarse equivalence.
\end{enumerate}
\end{definition}

\begin{remark}
An equivalent formulation of coarse embedding is the existence of
non-decreasing, metrically proper functions $\rho_{-},\rho_{+}:[0,\infty
)\rightarrow\lbrack0,\infty)$ such that
\[
\rho_{-}\left(  d\left(  x,x^{\prime}\right)  \right)  \leq d\left(  f\left(
x\right)  ,f\left(  x^{\prime}\right)  \right)  \leq\rho_{+}\left(  d\left(
x,x^{\prime}\right)  \right)
\]
for all $x,x^{\prime}\in X$, with $f$ being a coarse equivalence if it is also
large-scale surjective. Quasi-isometric embeddings and quasi-isometries are
the special cases where $\rho_{-}$ and $\rho_{+}$ can be chosen to be affine
linear functions.
\end{remark}

A group action on a metric space $X$ is \emph{geometric} if it is proper,
cocompact, and by isometries. Here \emph{cocompact }means that there exists a
compact $K\subseteq X$ such that $GK=X$, and \emph{proper} (sometimes called
\emph{properly discontinuous}) means that, for any compact $K\subseteq X$, the
set $\left\{  g\in G\mid gK\cap K\neq\varnothing\right\}  $ is finite. A
useful application of the notion of coarse equivalence is the following
variation on the classical \v{S}varc-Milnor Lemma.

\begin{proposition}
[Generalized \v{S}varc-Milnor]\label{Proposition: Generalized Svarc-Milnor}%
Suppose $G$ acts geometrically on a connected proper metric space $X$. Then
$G$ is finitely generated and, when $G$ is endowed with a corresponding word
metric and $x_{0}\in X$, the map $g\mapsto gx_{0}$ is a coarse equivalence.
\end{proposition}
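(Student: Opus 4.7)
The plan is to mimic the classical \v{S}varc-Milnor argument, replacing geodesic segments with $\varepsilon$-chains (which exist in any connected metric space) and accepting a coarse equivalence in place of a quasi-isometry. First, by cocompactness I choose $R>0$ so that $K=B_{d}[x_0,R]$ satisfies $GK=X$, and set
\[
S=\{\,s\in G\setminus\{e\}\mid d_X(x_0,sx_0)\le 3R\,\}.
\]
Since the action is proper, $\{\,g\in G : gB_{d}[x_0,3R/2]\cap B_{d}[x_0,3R/2]\neq\varnothing\,\}$ is finite; this set contains $S$, so $S$ is finite.

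To show $S$ generates $G$, I use the fact that any two points in a connected metric space can be joined by an $\varepsilon$-chain for every $\varepsilon>0$ (a standard clopen argument). Given $g\in G$, join $x_0$ to $gx_0$ by an $R$-chain $x_0=y_0,y_1,\dots,y_n=gx_0$, and for each $i$ choose $g_i\in G$ with $y_i\in g_iK$, taking $g_0=e$, $g_n=g$. The triangle inequality applied to $x_0$, $g_i^{-1}y_i$, $g_i^{-1}y_{i+1}$, $g_i^{-1}g_{i+1}x_0$, together with $G$-invariance of $d_X$, yields
\[
d_X(x_0,g_i^{-1}g_{i+1}x_0)\le R+R+R=3R,
\]
so each $g_i^{-1}g_{i+1}\in S\cup\{e\}$. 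The telescoping product $g=\prod_{i=0}^{n-1}(g_i^{-1}g_{i+1})$ then expresses $g$ as a word in $S$.

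Equipping $G$ with the word metric $d_S$ and setting $\phi(g)=gx_0$, I verify the four ingredients of a coarse equivalence. Let $D=\max_{s\in S}d_X(x_0,sx_0)$; if $d_S(g,h)=n$ with $g^{-1}h=s_1\cdots s_n$, then $d_X(\phi(g),\phi(h))\le nD$, so $\phi$ is large-scale uniform. Properness of the action makes $\phi^{-1}(B_{d}[x_0,M])=\{\,g:d_X(x_0,gx_0)\le M\,\}$ finite for every $M$, hence $d_S$-bounded, giving metric properness. Large-scale surjectivity is immediate from $GK=X$. For a coarse inverse, pick any $\psi:X\to G$ with $d_X(\psi(x)x_0,x)\le R$; then $\phi\psi$ is within $R$ of $\operatorname*{id}_{X}$, while $\psi\phi(g)$ differs from $g$ by an element of the finite set $\phi^{-1}(B_{d}[x_0,R])$, whose $d_S$-diameter is a uniform bound, so $\psi\phi$ is boundedly close to $\operatorname*{id}_{G}$. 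A brief check using large-scale uniformity of $\phi$ confirms $\psi$ is itself coarse.

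The only place where mere connectedness (as opposed to geodesic-ness) of $X$ enters essentially is the chain-generation step, and this is the main subtlety in the generalization. Because an $R$-chain from $x_0$ to $gx_0$ can consist of many more than $d_X(x_0,gx_0)/R$ points, no Lipschitz lower bound on $\phi$ is available; this is precisely why the statement asserts only a coarse equivalence. In a geodesic (or length) space the same argument refines to the classical quasi-isometric version of \v{S}varc-Milnor.
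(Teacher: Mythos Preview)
Your argument is essentially correct and self-contained, whereas the paper does not give a proof at all: it simply cites \cite[Cor.~0.9]{BDM07} for the coarse-equivalence statement and \cite[Th.~I.8.10]{BrHa99} for finite generation. So your approach is genuinely different in that it supplies the details directly, using $\varepsilon$-chains in place of geodesics to handle an arbitrary connected proper metric space. This buys independence from those references and makes transparent exactly where the weakening from ``quasi-isometry'' to ``coarse equivalence'' occurs (no control on chain length), which is pedagogically valuable in a paper with expository aims.

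One small slip: the claimed containment $S\subseteq\{\,g: gB_{d}[x_0,3R/2]\cap B_{d}[x_0,3R/2]\neq\varnothing\,\}$ presumes a midpoint between $x_0$ and $sx_0$, which need not exist in a non-geodesic space. Replace $3R/2$ by $3R$: if $d_X(x_0,sx_0)\le 3R$ then $sx_0\in B_{d}[x_0,3R]\cap sB_{d}[x_0,3R]$, and properness applied to the compact ball $B_{d}[x_0,3R]$ gives finiteness of $S$. Everything else goes through as written.
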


\begin{proof}
This is a special case of \cite[Cor.0.9]{BDM07}. Our version is simpler since
the finite generation of $G$ (which is standard \cite[Th.I.8.10]{BrHa99}),
allows the use of a word metric, which (also standard) is well-defined up to quasi-isometry.
\end{proof}

\begin{remark}
For length spaces (hence, for finitely generated groups) coarsely equivalent
and quasi-isometric are equivalent notions \cite[p.19]{NoYu12}. A nice aspect
of Proposition \ref{Proposition: Generalized Svarc-Milnor} is that $X$ need
not be a length space.
\end{remark}

The \emph{order }of an open cover $\mathcal{U}$ of a space $X$ is the largest
integer $k$ such that some $x\in X$ is contained in $k$ members of
$\mathcal{U}$. Classical \emph{Lebesgue covering dimension} of $X$ looks at
orders of open covers with arbitrarily small mesh; at the other extreme,
asymptotic dimension of $X$ considers orders of uniformly bounded covers with
arbitrarily large Lebesgue numbers. The following definition is far less rigid
than either of these, requiring a single uniformly bounded open cover of a
given index.

\begin{definition}
A space $X$ has \emph{finite macroscopic dimension }if it admits a uniformly
bounded open cover of finite order. If the order of that cover is $n+1$, we
write $\operatorname*{mdim}X\leq n$; if $n$ is the minimum such integer, we
say $\operatorname*{mdim}X=n$.
\end{definition}

\begin{definition}
A space $X$ is \emph{uniformly contractible} if for each $R>0$, there exists
$S>R$ so that every ball $B(x,R)$ contracts in $B(x,S)$.
\end{definition}

The most fundamental examples of the above two definitions occur in the
context of geometric group actions.

\begin{lemma}
\label{Lemma: G-action implies unif. contr. and finite mdim}If a group $G$
acts geometrically on a contractible proper metric space $X$, then $X$ is
uniformly contractible and has finite macroscopic dimension.
\end{lemma}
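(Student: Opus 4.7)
My plan is to use cocompactness of the $G$-action to reduce both conclusions to a single bounded region of $X$, then propagate the resulting estimates across $X$ by the isometric $G$-action. So I would begin by fixing a compact $K \subseteq X$ with $GK = X$, a basepoint $x_0 \in K$, and setting $D = \sup_{k \in K} d(k, x_0) < \infty$.

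For uniform contractibility, I would fix a contraction $H : X \times [0,1] \to X$, which exists because $X$ is contractible. Given $R > 0$, the closed ball $\overline{B(x_0, R+D)}$ is compact since $X$ is proper, so its image under $H$ is compact and therefore contained in some open ball $B(x_0, S_0)$; I would set $S = S_0 + D$. For an arbitrary $x \in X$, choose $g \in G$ with $g^{-1}x \in K$, so $d(g^{-1}x, x_0) \leq D$, and hence $B(x, R) = gB(g^{-1}x, R) \subseteq gB(x_0, R+D)$ by the triangle inequality. The conjugated homotopy $(y,t) \mapsto gH(g^{-1}y, t)$ then contracts $B(x, R)$ to a point, with image contained in $gB(x_0, S_0) \subseteq B(x, S)$, the last inclusion using that $g$ is an isometry and $d(gx_0, x) = d(x_0, g^{-1}x) \leq D$.

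For finite macroscopic dimension, I would pick an open neighborhood $U$ of $K$ with $\overline{U}$ compact (which exists because proper metric spaces are locally compact; e.g., take a sufficiently small metric neighborhood of $K$). Then $\mathcal{U} = \{gU : g \in G\}$ is an open cover of $X = GK$, each of whose members has diameter $\operatorname{diam}(U) < \infty$, so $\mathcal{U}$ is uniformly bounded. To bound the order, suppose $x \in g_1 U \cap \cdots \cap g_k U$. Then for each $i$ one has $g_1^{-1}g_i \overline{U} \cap \overline{U} \neq \varnothing$, so the elements $g_1^{-1}g_i$ all lie in the set $F = \{h \in G : h\overline{U} \cap \overline{U} \neq \varnothing\}$, which is finite by properness of the action on the compact set $\overline{U}$. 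Hence $k \leq |F|$ uniformly in $x$, giving $\operatorname{mdim} X < \infty$.

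The step I expect to require the most care is the contractibility argument: I need the conjugated homotopy to contract $B(x, R)$ inside a ball centered at the \emph{same} point $x$, not merely inside some translate, and this is exactly where using isometries (rather than arbitrary equivariant homeomorphisms) is essential. Once that is handled, everything else is bookkeeping with cocompactness and properness.
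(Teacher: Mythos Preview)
Your proposal is correct and follows essentially the same approach as the paper: both arguments use cocompactness to reduce to a fixed ball around a basepoint, then transport the local estimates by the isometric action, with properness of the action giving the finite-order bound on the cover. The only cosmetic difference is that the paper works directly with a ball $B(x_0,T)$ satisfying $GB(x_0,T)=X$ in place of your compact $K$ and neighborhood $U$, but the content is identical.
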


\begin{proof}
Let $x_{0}\in X$, and choose $T>0$ sufficiently large that $GB\left(
x_{0},T\right)  =X$. Applying properness to $B[x_{0},T]$ shows that $\left\{
gB\left(  x_{0},T\right)  \mid g\in G\right\}  $ is a finite order open cover.

For uniform contractibility, let $R>0$ and $x\in X$ be arbitrary. Since
$B[x_{0},T+R]$ is compact, the contraction of $X$ restricts to a contraction
of $B[x_{0},T+R]$ in $B\left(  x_{0},S\right)  $ for some $S>0$. Choose $g\in
G$ such that $d\left(  x,gx_{0}\right)  <T$. Then $B\left(  x,R\right)
\subseteq B\left(  gx_{0},R+T\right)  $. Since the latter contracts in
$B\left(  gx_{0},S\right)  $, which is contained in $B\left(  x,T+S\right)  $,
then $B\left(  x,R\right)  $ contracts in $B\left(  x,T+S\right)  .$
\end{proof}

\section{Continuous approximations and proper homotopy
equivalences\label{Section: Continuous approximations}}

A crucial ingredient in this paper is an ability to approximate certain
functions with continuous ones. In this section, we develop the necessary
tools. Theorem \ref{Theorem: q-i groups act on phe spaces} and Corollary
\ref{Corollary: Cor to q-i groups act on phe spaces} will be almost immediate
consequences.\medskip

Let $\mathcal{U}$ be a locally finite open cover of a space $X$. The
\emph{nerve} of $\mathcal{U}$ is the abstract simplicial complex $N\left(
\mathcal{U}\right)  $ with vertex set $\mathcal{U}$ and a $k$-simplex
$\left\{  U_{0},U_{1},\cdots,U_{k}\right\}  $ whenever $\cap_{i=0}^{k}%
U_{i}\neq\varnothing$. Clearly $N\left(  \mathcal{U}\right)  $ is a locally
finite complex. When discussing the geometric realization $\left\vert N\left(
\mathcal{U}\right)  \right\vert $ we will denote the vertex corresponding to
$U\in\mathcal{U}$ by $v_{U}$. There is a partition of unity $\left\{
\lambda_{U}\right\}  _{U\in\mathcal{U}}$ where, for each $U_{0}\in\mathcal{U}%
$, $\lambda_{U_{0}}:X\rightarrow\left[  0,1\right]  $ is defined by%
\[
\lambda_{U_{0}}\left(  x\right)  =d\left(  x,X\backslash U_{0}\right)  /%
{\textstyle\sum\limits_{U\in\mathcal{U}}}
d\left(  x,X\backslash U\right)
\]
The local finiteness assumption ensures that the sums are finite and
continuous. Use these functions to define the barycentric map $\beta
:X\rightarrow\left\vert N\left(  \mathcal{U}\right)  \right\vert $ by%
\[
f\left(  x\right)  =%
{\textstyle\sum\limits_{U\in\mathcal{U}}}
\lambda_{U}\left(  x\right)  v_{U}%
\]
In other words, $x$ is taken to the point in the geometric realization of the
simplex $\left\{  U_{0},U_{1},\cdots,U_{k}\right\}  $ of all open sets
containing $x$ with barycentric coordinates $\lambda_{U_{i}}\left(  x\right)
$.

Our primary use of the above construction is the following.

\begin{lemma}
Suppose $X$ admits a uniformly bounded open cover of order $n+1$. Then $X$
contains a collection $\mathcal{A}^{0},\mathcal{A}^{1},\cdots,\mathcal{A}^{n}$
of locally discrete families of disjoint, uniformly bounded, closed sets which
together cover $X$.
\end{lemma}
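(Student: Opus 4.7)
My plan is to produce the desired families by pulling back a natural decomposition of the nerve $K=N(\mathcal{U})$ along the barycentric map $\beta\colon X\to |K|$ defined just above the lemma. Since the order of $\mathcal{U}$ is $n+1$, every simplex of $K$ has at most $n+1$ vertices, so $K$ is a locally finite simplicial complex of dimension at most $n$. The central trick will be to pass to the first barycentric subdivision $K'$ and color its vertices by dimension: each vertex $b_\sigma$ of $K'$ receives the color $\dim\sigma\in\{0,1,\dots,n\}$. Since every simplex of $K'$ corresponds to a strict chain $\sigma_0\subsetneq\dots\subsetneq\sigma_r$ of simplices of $K$ (whose dimensions are therefore distinct), the vertices of any $K'$-simplex have pairwise distinct colors. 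In particular, no two distinct same-color barycenters can appear in the chain through a single point of $|K|$.

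For each $\sigma\in K$ I will set
\[
A_\sigma = \bigl\{\, y\in |K| : \lambda_{b_\sigma}(y)\geq \tfrac{1}{n+1} \,\bigr\},
\]
where $\lambda_{b_\sigma}\colon |K|\to[0,1]$ is the continuous $K'$-barycentric coordinate at $b_\sigma$, and then define $\mathcal{B}^k=\{A_\sigma:\dim\sigma=k\}$. The coloring observation above forces same-color $A_\sigma$'s to be pairwise disjoint, while the $K'$-coordinates at any point sum to $1$ over at most $n+1$ vertices, so some coordinate reaches $\tfrac{1}{n+1}$ and the union $\bigcup_k\bigcup\mathcal{B}^k$ exhausts $|K|$. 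Pulling back via $\beta$, I set $\mathcal{A}^k=\{\beta^{-1}(A_\sigma):\dim\sigma=k\}$. Each member is closed by continuity; pairwise disjointness within a family and the covering property transfer from $|K|$ to $X$ immediately. For uniform boundedness, observe that $\lambda_{b_\sigma}(\beta(x))>0$ forces $\sigma$ to belong to the chain at $\beta(x)$, hence $x$ lies in every $U\in\sigma$; choosing any $U_0\in\sigma$ gives $\beta^{-1}(A_\sigma)\subseteq U_0$, and the uniform bound on $\operatorname{diam}\mathcal{U}$ supplies the desired bound.

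The step I expect to require the most care is verifying that each $\mathcal{A}^k$ is \emph{locally discrete}, not merely disjoint, in $X$. Given $x\in X$, local finiteness of $K$ furnishes a neighborhood $W$ of $\beta(x)$ meeting only finitely many open simplices of $K$ (hence of $K'$), so only finitely many $A_\sigma$'s meet $W$. Among these with $\dim\sigma=k$, at most one $\sigma^*$ satisfies $\beta(x)\in A_{\sigma^*}$ (again by the chain argument); each of the remaining finitely many $\sigma$'s has $\lambda_{b_\sigma}(\beta(x))<\tfrac{1}{n+1}$, so intersecting $W$ with the corresponding open sets $\{\lambda_{b_\sigma}<\tfrac{1}{n+1}\}$ yields a neighborhood $W'$ of $\beta(x)$ whose preimage $\beta^{-1}(W')$ meets at most one element of $\mathcal{A}^k$.

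The key conceptual obstacle, which justifies passing to $K'$ in the first place, is that one cannot simply $(n+1)$-color the vertices of $K$ itself: the clique number of the $1$-skeleton of $K$ (the intersection graph of $\mathcal{U}$) can exceed $n+1$, since pairwise intersection of cover elements does not imply common intersection. The dimension coloring on $K'$ is precisely what converts the order hypothesis into a usable proper coloring, after which the remaining verifications are standard applications of continuity and local finiteness.
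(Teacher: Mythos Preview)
Your argument follows the same strategy as the paper's: pull back along the barycentric map $\beta$ a decomposition of $|N(\mathcal U)|$ into closed pieces indexed by simplices of the nerve and grouped by dimension. The paper takes as its pieces the closed stars in the second barycentric subdivision $N''$ of the barycenters (i.e., of the vertices of $N'$), whereas you take the superlevel sets $\{\lambda_{b_\sigma}\ge 1/(n+1)\}$ in $K'$. Both are closed neighborhoods of the barycenters with the same separation and covering properties, and your version has the mild advantage of not needing to introduce $N''$. The verifications you give (disjointness via the chain/coloring argument, covering via the pigeonhole on $K'$-coordinates, uniform boundedness via containment in a single $U\in\mathcal U$, local discreteness via local finiteness of $K$) are all correct.

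There is, however, one genuine gap. You assert that $K=N(\mathcal U)$ is a locally finite simplicial complex because the order of $\mathcal U$ is $n+1$; this does not follow. Bounded order only bounds the dimension of the nerve, not the number of simplices meeting a given vertex. For instance, on $X=\{0\}\cup\{1/n:n\ge 1\}$ the open cover $U_0=X$, $U_n=\{1/n\}$ has order $2$ and is uniformly bounded, yet $v_{U_0}$ lies in infinitely many edges of the nerve. The paper handles this by first refining $\mathcal U$ to a locally finite open cover of the same order (citing Lemma~41.6 of \cite{Mun00}); without that preliminary step the barycentric map $\beta$ you invoke is not even defined (the paper's construction of $\beta$ explicitly assumes a locally finite cover), and your local-discreteness argument, which uses local finiteness of $K$ directly, breaks down. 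Once you insert that refinement, your proof is complete and essentially equivalent to the paper's.
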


\begin{proof}
By a theorem of general topology (see Lemma 41.6 in \cite{Mun00}) we may
assume that $\mathcal{U}$ is locally finite. Let $\beta:X\rightarrow\left\vert
N\left(  \mathcal{U}\right)  \right\vert $ be the barycentric map. Let
$N^{\prime}$ and $N^{\prime\prime}$ be the first and second derived
subdivision of $\left\vert N\left(  \mathcal{U}\right)  \right\vert $; then
for each $i$, let $\mathcal{A}^{i}$ be the collection of preimages of closed
star neighborhoods in $N^{\prime\prime}$ of the vertices of $N^{\prime}$ which
are barycenters of the $i$-simplices of $\left\vert N\left(  \mathcal{U}%
\right)  \right\vert $.
\end{proof}

\begin{proposition}
\label{Prop: continuous approximation}Let $f:X\rightarrow Y$ be a large-scale
uniform function, where $X$ has finite macroscopic dimension and $Y$ is a
uniformly contractible ANR. Then there is a continuous function
$g:X\rightarrow Y$ that is a bounded distance from $f$. If $E\subseteq X$ is a
closed set on which $f$ is already continuous, then $g$ may be chosen so that
$\left.  g\right\vert _{E}=\left.  f\right\vert _{E}$
\end{proposition}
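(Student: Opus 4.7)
The plan is to argue inductively over the ``macroscopic skeleta'' furnished by the preceding lemma. Write $\mathcal{A}^{0},\ldots,\mathcal{A}^{n}$ for the locally discrete families it produces, let $D$ be a common diameter bound on their members, and let $\sigma$ be a large-scale modulus for $f$, so that $\operatorname{diam}f(A)\leq\sigma(D)$ for every $A\in\mathcal{A}^{i}$. Let $S:[0,\infty)\rightarrow[0,\infty)$ be a gauge witnessing uniform contractibility of $Y$, so every $R$-ball in $Y$ contracts inside the concentric $S(R)$-ball. Set $F_{k}=\bigcup_{i\leq k}\bigcup\mathcal{A}^{i}$; each $F_{k}$ is closed because a locally discrete union of closed sets is closed, and $F_{-1}=\varnothing$, $F_{n}=X$. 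The strategy is to build continuous maps $g_{k}:E\cup F_{k}\rightarrow Y$ extending $f|_{E}$ and satisfying the uniform estimate $d(g_{k}(x),f(x))\leq R_{k}$ on $F_{k}$, where the constants $R_{k}$ are defined recursively by $R_{-1}=0$ and $R_{k}=\sigma(D)+S(\sigma(D)+R_{k-1})$. The desired $g$ is then $g_{n}$.

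The inductive step is carried out one member $A\in\mathcal{A}^{k}$ at a time. Pick $a_{A}\in A$ and set $y_{A}=f(a_{A})$. Then $A\cap(E\cup F_{k-1})$ is closed in $A$, $g_{k-1}$ is continuous on it, and the inductive bound places
\[
g_{k-1}\bigl(A\cap(E\cup F_{k-1})\bigr)\subseteq B\bigl(y_{A},\,\sigma(D)+R_{k-1}\bigr).
\]
By uniform contractibility this ball contracts inside $B(y_{A},S(\sigma(D)+R_{k-1}))$, so $g_{k-1}|_{A\cap(E\cup F_{k-1})}$, viewed as a map into the larger ball, is null-homotopic. That larger ball is an ANR by Corollary~\ref{Corollary: open subsets of ANRs are ANRs}, so Corollary~\ref{Corollary: null-homotopic extension} supplies a continuous extension $A\rightarrow B(y_{A},S(\sigma(D)+R_{k-1}))$, which lies within $R_{k}$ of $f$ on $A$. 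The extensions glue continuously over the disjoint members of $\mathcal{A}^{k}$ by local discreteness; continuity of the resulting $g_{k}$ across the interface with $E\cup F_{k-1}$ follows from the pasting lemma applied near each point, since near any point of $X$ at most one member of $\mathcal{A}^{k}$ is relevant and the new definition agrees with $g_{k-1}$ on the closed overlap $A\cap(E\cup F_{k-1})$.

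There is no deep obstacle here, only careful bookkeeping. The three things to get right are: (i) the inductive invariant must carry an explicit uniform bound $R_{k}$, not merely ``bounded distance'', so that at the next stage one knows precisely which ball in $Y$ contains the image; (ii) the null-homotopy is claimed in the \emph{larger} ball $B(y_{A},S(\sigma(D)+R_{k-1}))$ rather than in the smaller ball that actually contains $g_{k-1}|_{A\cap(E\cup F_{k-1})}$, because Corollary~\ref{Corollary: null-homotopic extension} extends into whichever ANR we regard as the target; and (iii) finiteness of the macroscopic dimension is essential precisely because it ensures that the recursion terminates after $n+1$ steps, yielding a single finite $R_{n}$ that bounds $d(g,f)$ globally. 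When $E$ is empty the same argument produces a global continuous approximation; when $E$ is nonempty, because the definition on $A\cap E$ is never altered, one obtains $g|_{E}=f|_{E}$ automatically.
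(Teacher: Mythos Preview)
Your proof is correct and follows essentially the same inductive strategy as the paper: both proceed over the families $\mathcal{A}^{0},\ldots,\mathcal{A}^{n}$, use uniform contractibility to see that the partially defined map restricted to each $A$ is null-homotopic in a controlled ball, and then invoke Corollaries~\ref{Corollary: null-homotopic extension} and~\ref{Corollary: open subsets of ANRs are ANRs} to extend. The only cosmetic difference is that the paper first adjoins one point $p_{A}$ from each $A$ to $E$ (so that every $A$ meets the base set), whereas you allow $A\cap(E\cup F_{k-1})$ to be empty and handle it directly; both versions work.
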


\begin{proof}
Let $\mathcal{A}^{0},\mathcal{A}^{1},\cdots,\mathcal{A}^{n}$ be a finite set
of locally discrete collections of $K$-bounded closed sets which together
cover $X$, and let $\mathcal{A}$ denote the collection $\cup\mathcal{A}^{i}$.
From each $A\in\mathcal{A}$, choose a point $p_{A}$, and note that the set
$\left\{  p_{A}\right\}  $ is discrete. By adjoining these points to $E$ we
may assume, without loss of generality, that $E$ intersects each
$A\in\mathcal{A}$.

Define closed sets $C_{-1}\subseteq C_{0}\subseteq C_{1}\subseteq
\cdots\subseteq C_{n}$ by $C_{-1}=E$ and $C_{i}=C_{i-1}\cup\left(  \cup
_{A\in\mathcal{A}^{i}}A\right)  $ for each $i\geq0$. We will construct
$g:X\rightarrow Y$ inductively over the $C_{i}$.

Choose $L>0$ such that $d\left(  f\left(  x\right)  ,f\left(  x^{\prime
}\right)  \right)  <L$ whenever $d\left(  x,x^{\prime}\right)  <K$. Then let
$g_{-1}\equiv\left.  f\right\vert _{E}:C_{-1}\rightarrow Y$, and assume
inductively that there exists $L_{i}>0$ and a continuous function $g_{i}%
:C_{i}\rightarrow Y$ which is $2L_{i}$-close to $\left.  f\right\vert _{C_{i}%
}$ and agrees with $f$ on $E$.

To extend $g_{i}$ to $g_{i+1}:C_{i+1}\rightarrow Y$, let $A\in\mathcal{A}%
^{i+1}$. Since $f\left(  A\right)  \subseteq B\left(  f(p_{A}),L\right)  $,
then $g^{i}\left(  C^{i}\cap A\right)  \subseteq B\left(  f(p_{A}%
),L+2L_{i}\right)  $. Choose $L_{i+1}>L+2L_{i}$ so that each $B\left(
y,L+2L_{i}\right)  \subseteq Y$ contracts in $B\left(  y,L_{i+1}\right)  $. By
Corollaries \ref{Corollary: null-homotopic extension} and
\ref{Corollary: open subsets of ANRs are ANRs}, there is a continuous
extension of $\left.  g_{i}\right\vert _{A\cap C^{i}}$ to $g_{i+1}%
^{A}:A\rightarrow B\left(  y,L_{i+1}\right)  $; this map is necessarily
$2L_{i+1}$-close to $\left.  f\right\vert _{A}$. Take the union of $g_{i}$
with the $g_{i+1}^{A}$, over all $A\in\mathcal{A}^{i+1}$, to obtain a
continuous map $g_{i+1}:C_{i+1}\rightarrow Y$ which is $2L_{i+1}$-close to
$\left.  f\right\vert _{C_{i+1}}$ and is identical to $f$ on $E$. The
Proposition follows by induction.
\end{proof}

\begin{corollary}
\label{Corollary: boundedly close = boundedly homotopic}Suppose
$f,g:X\rightarrow Y$ are continuous, boundedly close, large-scale uniform
maps, where $X$ has finite macroscopic dimension and $Y$ is a uniformly
contractible ANR. Then $f$ and $g$ are boundedly homotopic.
\end{corollary}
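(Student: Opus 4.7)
The plan is to deduce this corollary directly from Proposition \ref{Prop: continuous approximation} by constructing an ``obvious'' homotopy $F$ that is large-scale uniform but not yet continuous, and then continuously approximating it rel.\ $X\times\{0,1\}$.

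First, I would endow $X\times[0,1]$ with the product (sup) metric and verify two things: (a) $X\times[0,1]$ has finite macroscopic dimension, because if $\mathcal{U}$ is a uniformly bounded open cover of $X$ of order $n+1$, then $\{U\times[0,1]\mid U\in\mathcal{U}\}$ is a uniformly bounded open cover of $X\times[0,1]$ of the same order; and (b) the function
\[
F:X\times[0,1]\to Y,\qquad F(x,t)=\begin{cases} f(x) & \text{if } t<\tfrac12,\\ g(x) & \text{if } t\geq \tfrac12,\end{cases}
\]
is large-scale uniform. For (b), if $d((x,s),(x',s'))<R$ then $d_X(x,x')<R$, so using large-scale uniformity of $f$ and of $g$ together with the bounded closeness constant $M=\sup_{x}d(f(x),g(x))$, we see that $d(F(x,s),F(x',s'))$ is bounded above by a constant depending only on $R$.

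Next, let $E=X\times\{0,1\}$, which is closed in $X\times[0,1]$, and observe that $F$ restricted to $E$ is continuous (it equals $f$ on $X\times\{0\}$ and $g$ on $X\times\{1\}$, and each component of $E$ is clopen in $E$). Apply Proposition \ref{Prop: continuous approximation} to $F$ with this $E$: since $X\times[0,1]$ has finite macroscopic dimension and $Y$ is a uniformly contractible ANR, there exists a continuous $H:X\times[0,1]\to Y$ with $H|_{E}=F|_{E}$ and $\sup d(H,F)\leq M'<\infty$. In particular, $H_0=f$ and $H_1=g$, so $H$ is a homotopy from $f$ to $g$.

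It remains to check that $H$ is a bounded homotopy. For any $x\in X$, the set $F(\{x\}\times[0,1])\subseteq\{f(x),g(x)\}$ has diameter at most $M$, so $H(\{x\}\times[0,1])$ is contained in the $M'$-neighborhood of a set of diameter $\leq M$, hence has diameter at most $M+2M'$, independently of $x$. This uniform bound shows $H$ is a bounded homotopy, completing the proof. The only mild technical point is verifying large-scale uniformity of the discontinuous interpolation $F$; the rest is a direct invocation of the already-established continuous approximation theorem, so I expect no substantial obstacle.
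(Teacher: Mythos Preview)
Your proof is correct and follows essentially the same approach as the paper: define a discontinuous step-function ``homotopy'' from $f$ to $g$ on $X\times[0,1]$ and then apply Proposition~\ref{Prop: continuous approximation} with $E=X\times\{0,1\}$ to obtain a continuous (and bounded) approximation. The paper places the jump at $t=0^+$ rather than $t=1/2$ and omits the routine verifications (finite macroscopic dimension of $X\times[0,1]$, large-scale uniformity of the step function, boundedness of the resulting homotopy) that you spell out, but the argument is the same.
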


\begin{proof}
Apply Proposition \ref{Prop: continuous approximation} to the situation
$J_{0}=f$ and $J_{t}=g$ for $0<t\leq1$ and $E=X\times\left\{  0,1\right\}  $
to get a continuous approximation $K:X\times\left[  0,1\right]  \rightarrow Y$.
\end{proof}

\begin{corollary}
\label{Corollary: coarse equivalence implies p.h.e.}Let $f^{\prime
}:X\rightarrow Y$ be a coarse equivalence between uniformly contractible
proper metric ANRs, each having finite macroscopic dimension. Then

\begin{enumerate}
\item $f^{\prime}$ is boundedly close to a continuous coarse equivalence
$f:X\rightarrow Y$,

\item $f$ (and hence $f^{\prime}$) has a continuous coarse inverse
$g:Y\rightarrow X$,

\item $gf$ and $fg$ are boundedly (hence properly) homotopic to
$\operatorname*{id}_{X}$ and $\operatorname*{id}_{Y}$, so $f$ and $g$ are
proper homotopy equivalences.\medskip
\end{enumerate}
\end{corollary}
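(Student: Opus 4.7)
The strategy is to apply Proposition \ref{Prop: continuous approximation} in each direction to upgrade the (not necessarily continuous) coarse equivalence and its coarse inverse to continuous maps, and then to invoke Corollary \ref{Corollary: boundedly close = boundedly homotopic} to promote the bounded-closeness relations $gf \sim \operatorname{id}_X$ and $fg \sim \operatorname{id}_Y$ into bounded homotopies. All the hypotheses of those two tools (finite macroscopic dimension of the source, uniform contractibility and the ANR property of the target) are symmetric between $X$ and $Y$, so the approximation procedure can be run on both sides without change.

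For (1), let $g' \colon Y \to X$ be a coarse inverse of $f'$. Since $f'$ is large-scale uniform and $X$ has finite macroscopic dimension into the uniformly contractible ANR $Y$, Proposition \ref{Prop: continuous approximation} yields a continuous $f \colon X \to Y$ with $f$ a bounded distance from $f'$. Bounded closeness preserves metric properness and large-scale uniformity (as noted in the lemma from \S \ref{Section: Group theory and metric geometry}), so $f$ is itself coarse; moreover $g'f$ remains boundedly close to $g'f'$, hence to $\operatorname{id}_X$, and $fg'$ remains boundedly close to $f'g'$, hence to $\operatorname{id}_Y$. Thus $f$ is a continuous coarse equivalence with coarse inverse $g'$.

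For (2), apply Proposition \ref{Prop: continuous approximation} once more to approximate $g'$ by a continuous $g \colon Y \to X$ at bounded distance. The same bookkeeping shows $g$ is coarse and that $gf$ and $fg$ are boundedly close to $\operatorname{id}_X$ and $\operatorname{id}_Y$ respectively. For (3), both $gf$ and $\operatorname{id}_X$ are continuous, large-scale uniform maps from $X$ (which has finite macroscopic dimension) into the uniformly contractible ANR $X$, and they are boundedly close; Corollary \ref{Corollary: boundedly close = boundedly homotopic} then produces a bounded homotopy between them. The analogous statement for $fg$ and $\operatorname{id}_Y$ follows identically. Finally, a bounded homotopy $H$ between continuous metrically proper maps is itself proper: preimages of bounded (hence of compact) sets in the target remain bounded by uniform boundedness of the tracks, and so are compact by continuity and properness of the ambient space. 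Hence $f$ and $g$ are mutually inverse proper homotopy equivalences.

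The only step requiring any care is verifying that bounded closeness transfers through the compositions $g'f$, $fg'$, $gf$, $fg$ without growing into an uncontrolled error, and I expect this to be a routine triangle-inequality chase rather than a genuine obstacle; no new ideas beyond the two previously established tools are needed.
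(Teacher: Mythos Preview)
Your proposal is correct and follows precisely the route the paper has in mind: the paper states this corollary without proof, treating it as an immediate consequence of Proposition~\ref{Prop: continuous approximation} and Corollary~\ref{Corollary: boundedly close = boundedly homotopic}, and your argument supplies exactly those details. The one small quibble is that the lemma in \S\ref{Section: Group theory and metric geometry} only explicitly records that bounded closeness preserves metric properness, not large-scale uniformity; but the latter is an obvious triangle-inequality check, so this is not a gap.
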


The final observation of this section provides strong versions Theorem
\ref{Theorem: q-i groups act on phe spaces} and Corollary
\ref{Corollary: Cor to q-i groups act on phe spaces}.

\begin{theorem}
\label{Theorem: Strong version of Theorem 1}Suppose quasi-isometric groups $G$
and $H$ act geometrically on proper metric ANRs $X$ and $Y$, respectively.
Then $X$ and $Y$ are proper homotopy equivalent via continuous coarse equivalences.
\end{theorem}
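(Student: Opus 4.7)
The plan is to combine the Generalized \v{S}varc-Milnor Lemma with a skeleton-by-skeleton continuous-approximation argument in the spirit of Proposition~\ref{Prop: continuous approximation} and Corollary~\ref{Corollary: coarse equivalence implies p.h.e.}, adapted so that the target need not be uniformly contractible. First, I would apply Proposition~\ref{Proposition: Generalized Svarc-Milnor} to both actions to obtain coarse equivalences from the orbit maps $G \to X$ and $H \to Y$. Composing a quasi-inverse of the first with a quasi-isometry $\phi : G \to H$ and then with the second yields a coarse equivalence $f_{0} : X \to Y$ (not yet continuous); symmetric use of a quasi-inverse of $\phi$ produces $g_{0} : Y \to X$, with $g_{0} f_{0}$ and $f_{0} g_{0}$ boundedly close to the identities. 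Moreover, the first paragraph of the proof of Lemma~\ref{Lemma: G-action implies unif. contr. and finite mdim} --- which never uses contractibility --- shows that $X$ and $Y$ both have finite macroscopic dimension, bounded above by some common integer $n$.

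The heart of the argument is the continuous approximation of $f_{0}$. Proposition~\ref{Prop: continuous approximation} does not apply off the shelf, since $Y$ is not assumed uniformly contractible. However, $H$ acts cocompactly by isometries on the ANR $Y$, so the quotient $Y/H$ is a compact ANR and is therefore locally $k$-connected for every $k$; lifting, $Y$ has uniform local $k$-connectivity at a fixed positive scale $R_{0}$ for all $k \leq n$. Because the open cover of $X$ given by translates of a small fundamental open set has nerve of dimension at most $n$, the inductive skeleton extension in the proof of Proposition~\ref{Prop: continuous approximation} terminates in $n+1$ steps; by choosing the cover fine enough, all simplex-image diameters can be kept below $R_{0}$ throughout the induction, and the required extensions then exist. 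This yields a continuous $f : X \to Y$ boundedly close to $f_{0}$, and the symmetric construction produces continuous $g : Y \to X$ boundedly close to $g_{0}$.

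Finally, $gf$ and $fg$ are continuous and boundedly close to the identities, and an analog of Corollary~\ref{Corollary: boundedly close = boundedly homotopic} (again using uniform local $n$-connectivity in place of uniform contractibility) provides bounded homotopies to the identity maps. Such bounded homotopies are automatically proper since the identities are proper, so $f$ and $g$ are mutually proper homotopy inverses. The principal obstacle is this middle step: verifying that uniform local $k$-connectivity up to dimension $n$ really does suffice for the finite-dimensional nerve extension. This requires carefully tracking how simplex-image diameters in $Y$ grow from stage to stage of the induction and confirming that a sufficiently fine initial cover of $X$ keeps every such diameter below $R_{0}$ simultaneously for all $k \leq n$.
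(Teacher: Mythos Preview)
The statement as printed---with ``ANRs'' rather than ``ARs''---is false, so no proof strategy can succeed. For instance, $G=H=\mathbb{Z}$ acts geometrically on both $X=\mathbb{R}$ and $Y=\mathbb{R}\times S^{1}$; both are proper metric ANRs, but $X$ is contractible while $Y$ is not even simply connected, so they are not homotopy equivalent. The word ``ANRs'' is almost certainly a typo for ``ARs'' (compare Theorem~\ref{Theorem: q-i groups act on phe spaces}), and under that reading the paper's argument is a two-line citation: Proposition~\ref{Proposition: Generalized Svarc-Milnor} gives a coarse equivalence $X\to Y$; Lemma~\ref{Lemma: G-action implies unif. contr. and finite mdim} supplies uniform contractibility and finite macroscopic dimension (using that ARs are contractible); and Corollary~\ref{Corollary: coarse equivalence implies p.h.e.} finishes.

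Your attempt to salvage the literal ANR version has a concrete gap even before one runs into the counterexample. You propose to keep all simplex-image diameters below a fixed local-connectivity scale $R_{0}$ by refining the cover of $X$. But $f_{0}$ is only large-scale uniform, not continuous: a set of $d_{X}$-diameter $K$ maps to a set of $d_{Y}$-diameter at most $S(K)$, where $S$ is determined by the coarse data and does \emph{not} tend to $0$ as $K\to 0$. Refining the cover of $X$ therefore gives no mechanism for forcing the images below $R_{0}$, and the inductive extension step cannot be started. (A secondary issue: $Y/H$ need not be an ANR when the action is not free, so the appeal to local $k$-connectedness of the quotient is also unjustified.) If you instead read the hypothesis as ``ARs'', your outline collapses to exactly the paper's proof: contractibility lets Lemma~\ref{Lemma: G-action implies unif. contr. and finite mdim} deliver genuine uniform contractibility, and Proposition~\ref{Prop: continuous approximation} and Corollary~\ref{Corollary: boundedly close = boundedly homotopic} apply without modification.
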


\begin{proof}
Use Proposition \ref{Proposition: Generalized Svarc-Milnor} to conclude that
$X$ and $Y$ are coarsely equivalent to their respective groups and, thus, to
each other; then apply Lemma
\ref{Lemma: G-action implies unif. contr. and finite mdim} and the previous corollary.
\end{proof}

\section{$\mathcal{Z}$-Boundaries of
groups\label{Section: Z-boundaries of groups}}

In \cite{Bes96}, Bestvina introduced the notion of \textquotedblleft%
$\mathcal{Z}$-boundary of a group\textquotedblright, to provide a framework
that includes Gromov boundaries of word hyperbolic groups and visual
boundaries of CAT(0) groups, and can also be applied to other types of groups,
as well. To avoid some technical issues, he restricted attention to groups
that act properly, freely, and cocompactly (i.e., by covering transformations)
on finite-dimensional ARs (i.e., ERs). In \cite{Dra06}, Dranishnikov modified
the definition to allow for non-free actions and arbitrary (proper metric)
ARs; but the added flexibility came with a loss of generality in some key
theorems. Some of the lost generality was restored in \cite{Mor16a}; most of
the rest is taken care of in this paper. So, with recent progress taken into
account, Dranishnikov's version seems to be the \textquotedblleft
right\textquotedblright\ definition. It is:

\begin{definition}
A\emph{ }$\mathcal{Z}$\emph{-structure }on a group $G$ is a pair of spaces
$(\overline{X},Z)$ satisfying the following four conditions:

\begin{enumerate}
\item $\overline{X}$ is a compact AR,

\item $Z$ is a $\mathcal{Z}$-set in $\overline{X}$,

\item $X=\overline{X}-Z$ is a proper metric space on which $G$ acts
geometrically, and

\item $\overline{X}$ satisfies the following \emph{nullity condition} with
respect to the $G$-action on $X$: for every compact $C\subseteq X$ and any
open cover $\mathcal{U}$ of $\overline{X}$, all but finitely many $G$
translates of $C$ lie in an element of $\mathcal{U}$.\smallskip
\end{enumerate}

\noindent When this definition is satisfied, $Z$ is called a $\mathcal{Z}%
$\emph{-boundary }for $G$. If, in addition to the above, the $G$-action on $X$
extends to $\overline{X}$, the result is called an $E\mathcal{Z}%
$\emph{-structure} (equivariant $\mathcal{Z}$-structure), and $Z$ is called an
$E\mathcal{Z}$\emph{-boundary}\textbf{ }for $G$.\medskip
\end{definition}

\noindent\textbf{Examples:}

\begin{enumerate}
\item If $G$ acts geometrically on a proper CAT(0) space $X$, then
$\overline{X}=X\cup\partial_{\infty}X$, with the cone topology, gives an
$E\mathcal{Z}$-structure for $G$.

\item In \cite{BeMe91} it is shown that if $G$ is a hyperbolic group,
$P_{\rho}(G)$ is an appropriately chosen Rips complex, and $\partial G$ is the
Gromov boundary, then $\overline{P}_{\rho}(G)=P_{\rho}(G)\cup\partial G$
(appropriately topologized) gives an $E\mathcal{Z}$-structure for $G$.

\item Osajda and Przytycki \cite{OsPr09} have shown that systolic groups admit
$E\mathcal{Z}$-structures.

\item Guilbault, Moran, and Tirel \cite{GMT} have shown that Baumslag-Solitar
groups admit $E\mathcal{Z}$-structures.
\end{enumerate}

More general classes of groups have been addressed by Tirel \cite{Tir11} (free
and direct products), Dahmani \cite{Dah03} (relatively hyperbolic groups), and
Martin \cite{Mar14} (complexes of groups).\medskip

\begin{remark}
Bestvina's definition of $\mathcal{Z}$-structure did not require $G$ to act by
isometries on $X$, but only cocompactly by covering transformations. By the
following proposition, deduced from \cite{AMN11}, there is no loss of
generality in requiring a geometric action.
\end{remark}

\begin{proposition}
Suppose $G$ acts properly and cocompactly on a locally compact space $X$. Then
there is an equivalent proper metric for $X$ under which which the action is
by isometries.
\end{proposition}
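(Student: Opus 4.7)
The plan is to verify the hypotheses of the main theorem of Abels--Manoussos--Noskov \cite{AMN11}, which is cited, and then invoke it. That theorem states: if a locally compact (Hausdorff, $\sigma$-compact) group $G$ acts properly on a locally compact space $X$ with paracompact (equivalently, for such $X$, metrizable) orbit space $X/G$, then $X$ admits a $G$-invariant proper metric compatible with its topology. So the task reduces to checking that the hypothesis on the orbit space holds in our setting.

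First, I would record that $X$ is second countable and Hausdorff (this is a blanket assumption of the paper, since all spaces are separable and metrizable). Let $q\colon X\to X/G$ be the orbit map. Since $q$ is open and continuous, $X/G$ is second countable. Cocompactness of the $G$-action means there is a compact $K\subseteq X$ with $GK=X$, so $X/G=q(K)$ is compact. Properness of the action implies that the orbit equivalence relation is closed in $X\times X$: if $(x_n,g_nx_n)\to(x,y)$, then $\{x_n\}\cup\{x\}$ is contained in a compact set $C$, and properness bounds the set of $g_n$ with $g_nx_n\in C$ to finitely many values, so a subsequence of $g_n$ is eventually constant, yielding $y\in Gx$. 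Hence $X/G$ is Hausdorff. A compact Hausdorff second countable space is metrizable by Urysohn, so in particular paracompact.

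Second, $G$ itself is a discrete (hence locally compact, $\sigma$-compact) group---this is built into the definition of \textquotedblleft group\textquotedblright\ in geometric group theory as used throughout the paper; $\sigma$-compactness follows since $G$ must be countable (for any $x_0\in X$, properness plus cocompactness force $G$ to be countable, as can be seen by covering $X$ by countably many compact sets). So $G$ meets the hypotheses of \cite{AMN11}.

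Applying the Abels--Manoussos--Noskov theorem produces a $G$-invariant metric $\rho$ on $X$ compatible with the given topology and proper in the sense that closed $\rho$-balls are compact. This $\rho$ is the metric asserted by the proposition. The only nontrivial ingredient is the existence result of \cite{AMN11}; everything else is routine verification, and that is exactly the hard part one would want to quote rather than reprove: their construction blends a choice of compact fundamental set with an averaging over $G$, and the subtle point is simultaneously ensuring $G$-invariance, topological compatibility, and properness of the resulting metric.
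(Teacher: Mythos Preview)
Your proposal is correct and matches the paper's approach: the paper gives no proof beyond the attribution ``deduced from \cite{AMN11}'', and you have simply filled in the routine verification that the hypotheses of that theorem are met (metrizability of the orbit space via compact Hausdorff second countable, and countability of $G$). One minor wording issue: in your closed-orbit-relation argument you should take the compact set to contain both $\{x_n\}\cup\{x\}$ and $\{g_nx_n\}\cup\{y\}$, not just the former, but the correction is immediate and the conclusion stands.
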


Due to our use of coarse equivalences and the Generalized \v{S}varc-Milnor
Theorem, the following proposition (when it applies) is stronger than
necessary. We include it because it is interesting and not widely known.

\begin{proposition}
Suppose $G$ acts cocompactly by covering transformations on a connected,
locally connected space $X$. Then there is an equivalent proper geodesic
metric for $X$ under which the action is by isometries.
\end{proposition}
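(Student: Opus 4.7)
The plan is to refine the proper $G$-invariant metric $d$ from the previous proposition into a $G$-invariant length metric $\rho$, from which the Hopf--Rinow--Cohn-Vossen theorem will automatically extract the geodesic structure. Applying the previous proposition yields such a $d$ inducing the given topology, so $(X,d)$ is locally compact in addition to being connected and locally connected. Using local connectedness, the compactness of the quotient $X/G$, and the covering-map structure of $\pi:X\to X/G$, I produce a compact \emph{connected} set $K\subseteq X$ whose interior $V$ also satisfies $GV=X$: cover the compact, connected, locally connected quotient $X/G$ by finitely many connected open sets with connected intersection graph, lift the cover through $\pi$ while keeping consecutive lifts chained by intersections, and take the closure of the union.

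Define the intrinsic (length) metric
\[
\rho(x,y)=\inf\{L_d(\gamma):\gamma\text{ a rectifiable }d\text{-path from }x\text{ to }y\},
\]
which is automatically $G$-invariant since $d$ is. Finiteness of $\rho$ follows from chain-connectedness of the cover $\{gV:g\in G\}$ of $X$: the union of translates reachable from $V$ by a chain of pairwise-overlapping translates is nonempty, open (as a union of open sets), and closed (any limit point lies in some $hV$, which must overlap a chain-element, forcing $h$ into the chain), hence equals $X$. Concatenating bounded-$d$-length paths inside each translate of the compact connected $K$ then provides rectifiable paths between arbitrary points of $X$.

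The main obstacle is verifying that $\rho$ induces the original topology. Since $\rho\geq d$, the $\rho$-topology is at least as fine; for the converse, one must show that $d$-close points are also $\rho$-close, i.e., that short $d$-steps can be realized by short rectifiable paths. This is where the $G$-action is essential: the local structure at every point is an isometric copy of the local structure at some point inside $K$, so a uniform local modulus of continuity between $d$ and $\rho$ can be extracted from the finitely many local models in $K$ together with its overlapping neighbors. Once the topologies coincide, $(X,\rho)$ is locally compact; since closed $\rho$-balls sit inside compact closed $d$-balls (because $\rho\geq d$), $(X,\rho)$ is proper. The Hopf--Rinow--Cohn-Vossen theorem applied to this proper length space yields the desired geodesic metric, which is $G$-invariant by construction.
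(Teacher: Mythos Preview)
There is a genuine gap at the step ``Concatenating bounded-$d$-length paths inside each translate of the compact connected $K$ then provides rectifiable paths between arbitrary points of $X$.'' Nothing in the previous proposition guarantees that $(X,d)$ contains \emph{any} nonconstant rectifiable paths, so your length metric $\rho$ may be identically $+\infty$ off the diagonal. A concrete instance: let $G=\mathbb{Z}$ act on $X=\mathbb{R}$ by integer translation, and take $d(x,y)=\sqrt{|x-y|}$. This $d$ is a proper $G$-invariant metric inducing the usual topology (so it is a legitimate output of the previous proposition), yet every nonconstant path has infinite $d$-length. Local connectedness and the compactness of $K$ give you continuous paths (via Hahn--Mazurkiewicz) but not rectifiable ones; that is precisely the difficulty. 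Your later ``uniform local modulus'' argument for topology equivalence collapses for the same reason.

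The paper sidesteps this by working on the quotient first: $G\backslash X$ is a Peano continuum, and Bing's convex-metrization theorem \cite{Bin52} furnishes a \emph{geodesic} metric $d'$ on $G\backslash X$. One then lifts $d'$ to $X$ by setting $\rho(x,y)=\inf\{\operatorname{length}_{d'}(p\circ\alpha)\}$ over paths $\alpha$ in $X$; because $p$ is a local isometry for this lifted metric, $\rho$ automatically generates the right topology and is complete and locally compact, whence Hopf--Rinow applies. The key input you are missing is exactly a Bing-type result producing rectifiable (indeed geodesic) paths from purely topological hypotheses; once you invoke it, it is cleaner to do so on the compact quotient and lift, rather than to attempt the intrinsic-metric construction upstairs.
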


\begin{proof}
Let $p:X\rightarrow G\backslash X$ be the resulting covering projection. By
\cite{Bin52}, there is a geodesic metric $d^{\prime}$ on $G\backslash X$ which
generates the desired quotient topology. Lift that metric to $X$ by defining
\[
\rho\left(  x,y\right)  =\inf\left\{  \operatorname*{length}\left(
p\alpha\right)  \mid\alpha\text{ is a path in }X\text{ from }x\text{ to
}y\right\}
\]
Since $p$ is a local homeomorphism, $\rho$ generates the original topology on
$X$. By similar reasoning, $\left(  X,\rho\right)  $ is complete and locally
compact; so by the Hopf-Rinow Theorem \cite[Ch.I.3]{BrHa99}, $\rho$ is a
proper geodesic metric. Clearly the $G$-action on $\left(  X,\rho\right)  $ is
by isometries.
\end{proof}

Without the benefit of covering space theory, we do not know the answer to the
following:\medskip

\noindent\textbf{Question. }\emph{If a group }$G$\emph{ acts geometrically on
a proper metric AR }$\left(  X,d\right)  $\emph{ does there exist an
equivalent geodesic metric }$\rho$\emph{ under which the action is by
isometries?}\medskip

We close this section with an easy, but useful, lemma linking controlled
$\mathcal{Z}$-com\-pact\-ifi\-ca\-tions and $\mathcal{Z}$-structures.

\begin{lemma}
\label{Lemma: Z-structures are controlled Z-compactifications} If a group $G$
admits a $\mathcal{Z}$-structure $(\overline{X},Z)$, then $\overline{X}$ is a
controlled $\mathcal{Z}$-com\-pact\-i\-fi\-ca\-tion of $X=\overline{X}-Z$.
Conversely, if $G$ acts geometrically on a proper metric AR $\left(
X,d\right)  $ which admits a controlled $\mathcal{Z}$%
-com\-pact\-i\-fi\-ca\-tion $\overline{X}=X\cup Z$, then $(\overline{X},Z)$ is
a $\mathcal{Z}$-structure on $G$.
\end{lemma}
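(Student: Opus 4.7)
The plan is to verify each implication by unwinding the definitions and exploiting the geometry of the $G$-action.

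For the forward direction, I assume $(\overline{X},Z)$ is a $\mathcal{Z}$-structure. Properties (1) and (2) already make $\overline{X}$ a $\mathcal{Z}$-compactification of $X = \overline{X} - Z$, so only the control condition $(\ddag)$ needs to be checked. Given $R > 0$ and an open cover $\mathcal{U}$ of $\overline{X}$, my plan is to pick a compact $K \subseteq X$ with $GK = X$ (cocompactness) and then thicken it to $\widetilde{K} = N_R(K)$, the closed $R$-neighborhood of $K$, which is bounded and closed in the proper metric space $X$ and is therefore compact. Because $G$ acts by isometries, $g\widetilde{K} = N_R(gK)$ for every $g\in G$, so any subset $A\subseteq X$ of $d$-diameter less than $R$ that meets $gK$ lies inside $g\widetilde{K}$. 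Applying the nullity condition to $\widetilde{K}$, all but finitely many translates $g\widetilde{K}$ lie inside some element of $\mathcal{U}$; taking $C$ to be the union of the remaining ``exceptional'' finitely many translates gives the compact set demanded by $(\ddag)$, since if $A\subseteq X-C$ is nonempty with $\operatorname{diam}_d A < R$ and $a\in A\cap gK$, then $A\subseteq g\widetilde{K}$ and $g\widetilde{K}$ cannot be exceptional (else $A\subseteq C$).

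For the converse, I take $G$ acting geometrically on a proper metric AR $(X,d)$ with a controlled $\mathcal{Z}$-compactification $\overline{X} = X\cup Z$. Conditions (1)--(3) are automatic: $\overline{X}$ is compact by the definition of compactification, it is an AR by Lemma~\ref{Lemma: Z-compactifications are ANRs}, $Z$ is a $\mathcal{Z}$-set by hypothesis, and the geometric action is given. To verify the nullity condition (4), I would fix a compact $C\subseteq X$ and an open cover $\mathcal{U}$, pick $R$ strictly larger than $\operatorname{diam}_d C$, and let $K\subseteq X$ be the compact set produced by $(\ddag)$ for this $R$ and $\mathcal{U}$. Properness of the $G$-action applied to the compact set $C\cup K$ shows that $gC\cap K = \varnothing$ for all but finitely many $g\in G$; for each such $g$, the translate $gC\subseteq X-K$ has diameter $\operatorname{diam}_d C < R$, and $(\ddag)$ then places $gC$ inside some $U\in\mathcal{U}$.

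Neither direction is deep; the only mildly subtle point lies in the forward direction, where I must bridge two differently phrased conditions: the nullity condition speaks about a fixed compactum and its translates, whereas $(\ddag)$ speaks about \emph{arbitrary} bounded subsets of $X$. The device that makes them match is the thickening $K\leadsto\widetilde{K}=N_R(K)$, which turns ``meets $gK$'' into ``lies inside $g\widetilde{K}$'' and lets a single nullity statement about $\widetilde{K}$ capture every bounded set of $d$-diameter less than $R$.
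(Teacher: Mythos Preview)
Your proof is correct, and the converse direction is essentially identical to the paper's argument. For the forward implication the paper merely cites \cite{GuMo15}, whereas you supply a correct self-contained argument via the thickening $\widetilde{K}=N_R(K)$, which is a natural way to flesh out that reference.
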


\begin{proof}
The initial observation can be found in \cite{GuMo15}. For the converse,
choose any compact set $C\subset X$ and any open cover $\mathcal{U}$ of
$\overline{X}$. Set $R=\operatorname*{diam}C+1$. From the control condition on
$\overline{X}$, there is a compact $K\subseteq X$ so that each subset of $X-K$
of diameter less than $R$ is contained in element of $\mathcal{U}$. Since the
action is by isometries $\operatorname*{diam}(gC)<R$ for all $g\in G$, and
since the action is proper only finitely many translates of $C$ intersect $K$;
thus, all but finitely many lie in an element of $\mathcal{U}$.
\end{proof}

\section{Boundary swapping\label{Section: Boundary swapping}}

We now obtain proofs of Theorems \ref{Theorem: Boundary Swapping Theorem} and
\ref{Theorem: Generalized Boundary Swapping}. Both follow from a more general
theorem that does not involve group actions. Our approach expands upon one
suggested by Ferry \cite{Fer00}.

\begin{theorem}
\label{Theorem: general boundary swapping theorem}Let $f:\left(
X,d_{X}\right)  \rightarrow\left(  Y,d_{Y}\right)  $ be a coarse equivalence
between uniformly contractible proper metric spaces, each having finite
macroscopic dimension, and suppose $Y$ admits a controlled $\mathcal{Z}%
$-com\-pact\-i\-fi\-ca\-tion $\overline{Y}=Y\cup Z$. Then $X$ admits a
controlled $\mathcal{Z}$-com\-pact\-i\-fi\-ca\-tion $\overline{X}=X\cup Z$. If
$f$ is continuous, this may be done so that $f$ extends to a continuous map
$\overline{f}:\overline{X}\rightarrow\overline{Y}$ which is the identity on
$Z$.
\end{theorem}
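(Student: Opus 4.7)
My plan is to build $\overline{X}$ by pulling back the topology of $\overline{Y}$ through a continuous coarse equivalence, and then to verify the resulting compactification is controlled and that $Z$ is a $\mathcal{Z}$-set, via Lemma \ref{Lemma: Z-set criterion} and Lemma \ref{Lemma: local property of controlled Z-compactifications}.

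First, I would reduce to the continuous case: if $f$ is not already continuous, Corollary \ref{Corollary: coarse equivalence implies p.h.e.} supplies a boundedly close continuous coarse equivalence (still denoted $f$) together with a continuous coarse inverse $g:Y\to X$ and a bounded homotopy $H:X\times[0,1]\to X$ from $\operatorname{id}_X$ to $gf$. (If $f$ was continuous to start with, keep it so as to obtain the final extension clause.) Set $\overline{X}=X\sqcup Z$ as a set and define $\overline{f}:\overline{X}\to\overline{Y}$ to be $f$ on $X$ and $\operatorname{id}_Z$ on $Z$. Topologize $\overline{X}$ by declaring $X$ to be open with its original topology and the sets $\overline{f}^{-1}(\overline{U})$, with $\overline{U}$ ranging over open neighborhoods of $z$ in $\overline{Y}$, to form a neighborhood basis at each $z\in Z$. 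Routine checks give Hausdorffness, denseness of $X$, and compactness: any net leaving every compact subset of $X$ has, by metric properness of $f$, an $\overline{f}$-image subnet converging in $\overline{Y}$ to some $z\in Z$, and this pulls back to convergence in $\overline{X}$. Metrizability then comes for free from Lemma \ref{Lemma: metrizability of Z-compactifications}, and by construction $\overline{f}$ is continuous, settling the extension clause.

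Next I would verify the control condition (\ddag). Given $R>0$ and an open cover $\mathcal{U}$ of $\overline{X}$, refine $\mathcal{U}$ near each $z\in Z$ to a basic open $\overline{f}^{-1}(\overline{U}_z)\subseteq U_z\in\mathcal{U}$. Let $R'$ be a bound on $\operatorname{diam}_{d_Y}f(A)$ for $R$-bounded $A\subseteq X$, provided by large-scale uniformity of $f$, and use Lemma \ref{Lemma: local property of controlled Z-compactifications} to shrink each $\overline{U}_z$ to a neighborhood $\overline{V}_z$ of $z$ satisfying $d_Y(V_z,Y-\overline{U}_z)\geq R'$. Finitely many $\overline{V}_{z_i}$ cover $Z$, so $K=\overline{Y}\setminus\bigcup_i\overline{V}_{z_i}$ is a compact subset of $Y$, and $C=f^{-1}(N_{R'}(K))$ is compact in $X$. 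For $A\subseteq X\setminus C$ of $d_X$-diameter less than $R$, the image $f(A)$ meets some $V_{z_i}$ while having $d_Y$-diameter less than $R'$, hence $f(A)\subseteq\overline{U}_{z_i}$ and $A\subseteq U_{z_i}$.

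The main obstacle is showing that $Z$ is a $\mathcal{Z}$-set in $\overline{X}$, which I would address via Lemma \ref{Lemma: Z-set criterion} applied to $\overline{f}$, to the extension $\overline{g}:\overline{Y}\to\overline{X}$ of $g$ by $\operatorname{id}_Z$, and to the homotopy $\overline{J}:\overline{X}\times[0,1]\to\overline{X}$ obtained from $H$ by setting $\overline{J}_t|_Z=\operatorname{id}_Z$. The hard technical step is continuity of each of $\overline{f}$, $\overline{g}$, and $\overline{J}$ at points of $Z$; in every case the verification reduces to Lemma \ref{Lemma: local property of controlled Z-compactifications} applied in $\overline{Y}$, exploiting boundedness of the homotopies $gf\simeq\operatorname{id}_X$ and $fg\simeq\operatorname{id}_Y$ together with the coarse uniformity of $f$ (for example, continuity of $\overline{g}$ at $z$ reduces to continuity of the composition $\overline{f}\,\overline{g}:\overline{Y}\to\overline{Y}$ at $z$, which follows from $fg$ being boundedly close to $\operatorname{id}_Y$). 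Once continuity is in place, the remaining hypotheses of Lemma \ref{Lemma: Z-set criterion}---namely $\overline{f}(X)\subseteq Y$, $\overline{g}(Y)\subseteq X$, $\overline{g}\,\overline{f}|_Z=\operatorname{id}_Z$, and $\overline{J}$ fixed on $Z$ with $\overline{J}(X\times[0,1])\subseteq X$, $\overline{J}_0=\operatorname{id}_{\overline{X}}$, $\overline{J}_1=\overline{g}\,\overline{f}$---are immediate, and since $Z$ is a $\mathcal{Z}$-set in $\overline{Y}$ the lemma yields the desired conclusion.
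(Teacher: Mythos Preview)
Your proposal is correct and follows essentially the same route as the paper: pull back the topology through $\overline{f}$, verify that $Z$ is a $\mathcal{Z}$-set via Lemma~\ref{Lemma: Z-set criterion} by extending $g$ and the bounded homotopy and checking their continuity at $Z$ through Lemma~\ref{Lemma: local property of controlled Z-compactifications} (the paper isolates your $\overline{J}$-continuity argument as a separate ``subclaim'' phrased in $\overline{X}$, but the content is the same), and then establish the control condition. One small circularity to fix: you invoke Lemma~\ref{Lemma: metrizability of Z-compactifications} for metrizability before having shown that $Z$ is a $\mathcal{Z}$-set in $\overline{X}$; instead, observe directly that $\overline{X}$ is compact, Hausdorff, and second countable and cite Urysohn, as the paper does.
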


\begin{proof}
If necessary, use Corollary 5.4 to replace $f$ with a continuous coarse
equivalence and choose a continuous coarse inverse $g:Y\rightarrow X$ and a
bounded homotopy $J:X\times\left[  0,1\right]  \rightarrow X$ with
$J_{0}=\operatorname*{id}_{X}$ and $J_{1}=gf$.

Extend $f$ to a function $\overline{f}:X\sqcup Z\rightarrow\overline{Y}$ by
letting $\overline{f}$ be the identity on $Z$. Then give $X\sqcup Z$ the
topology generated by the open subsets of $X$ and sets of the form
$\overline{f}^{-1}(U)$ where $U\subset\overline{Y}$ is open, and let
$\overline{X}$ denote the resulting topological space. Clearly, $\overline
{f}:\overline{X}\rightarrow\overline{Y}$ is continuous and $\overline{X}$ is
compact, Hausdorff, and second countable. It follows that $\overline{X}$ is
metrizable and separable. The theorem will be proved by showing that
$\overline{X}$ is a controlled $\mathcal{Z}$-com\-pact\-i\-fi\-ca\-tion of $X$.

Before proceeding, we establish some notation. Let $\overline{g}:\overline
{Y}\rightarrow\overline{X}$ and $\overline{J}:\overline{X}\times\left[
0,1\right]  \rightarrow\overline{X}$ be the obvious extensions which are the
identity on $Z$, and fix metrics $\overline{d}_{X}$ and $\overline{d}_{Y}$ to
$\overline{X}$ and $\overline{Y}$, respectively (these are \emph{not
}extensions of $d_{X}$ and $d_{Y})$. Whenever $\overline{U}$ denotes a subset
of $\overline{X}$ [resp., $\overline{Y}$], $U$ will denote $\overline{U}\cap
X$ [resp., $\overline{U}\cap Y$]. Finally, select $K>0$ so that $J$ is a
$K$-homotopy and $fg$ is $K$-close to $\operatorname*{id}_{Y}$.\medskip

\noindent\textbf{Claim 1.}\emph{ }$\overline{X}$ \emph{is a }$\mathcal{Z}%
$\emph{-com\-pact\-i\-fi\-ca\-tion of }$X$\emph{.}\medskip

This claim follows from Lemma \ref{Lemma: Z-set criterion}, provided that
$\overline{g}$ and $\overline{J}$ are continuous (all other hypotheses are immediate).

To see that $\overline{g}$ is continuous at $z\in Z$, let $\overline{f}%
^{-1}\left(  \overline{U}\right)  $ be a basic open neighborhood of
$\overline{g}\left(  z\right)  =z$ in $\overline{X}$. Then $z\in\overline{U}$
and by Lemma \ref{Lemma: local property of controlled Z-compactifications},
there is a smaller open neighborhood $\overline{V}$ of $z$ in $\overline{Y}$
such that $d_{Y}\left(  V,Y-U\right)  >K$. If $y\in V$ then $d_{Y}\left(
y,f\left(  g\left(  y\right)  \right)  \right)  \leq K$, so $f\left(  g\left(
y\right)  \right)  \in U\subseteq\overline{U}$; therefore $g\left(  y\right)
\in\overline{f}^{-1}\left(  \overline{U}\right)  $. It follows that
$\overline{g}\left(  \overline{V}\right)  \subseteq\overline{f}^{-1}\left(
\overline{U}\right)  $, so $\overline{g}$ is continuous at $z$.

To prove continuity of $\overline{J}$, we will need an analog of Lemma
\ref{Lemma: local property of controlled Z-compactifications} that can be
applied to $\overline{X}$.\medskip

\noindent\textbf{Subclaim.}\emph{ Given }$z\in Z$\emph{, a neighborhood
}$\overline{U}$\emph{ of }$z$\emph{ in }$\overline{X}$\emph{, and }%
$R>0$\emph{, there is a neighborhood }$\overline{V}$\emph{ of }$z$\emph{ so
that }$d(V,X-U)\geq R$\emph{.}\medskip

\emph{Proof of subclaim. }Since $f$ is a coarse map, choose $S>0$ so that
whenever $d_{X}(x,x^{\prime})<R$ then $d_{Y}(f(x),f(x^{\prime}))<S$. Without
loss of generality, we may assume that $\overline{U}=\overline{f}%
^{-1}(\overline{W})$, where $\overline{W}$ is open in $\overline{Y}$. By Lemma
\ref{Lemma: local property of controlled Z-compactifications}, $z$ has an open
neighborhood $\overline{W^{\prime}}\subset\overline{W}$ $d_{Y}(W^{\prime
},Y-W)>S$. Let $\overline{V}=\overline{f}^{-1}(\overline{W^{\prime}})$ and
note that if $x\in V$, $y\in X-U$, and $d(x,y)<R$; then $f(x)\in W^{\prime}$,
$f(y)\in Y-W$, and $d_{Y}(f(x),f(y))\leq S$, a contradiction which proves the
subclaim.\medskip

To see that $\overline{J}$ is continuous at $\left(  z,t\right)  \in
Z\times\left[  0,1\right]  $, let $\overline{U}$ be a neighborhood of
$\overline{J}\left(  z,t\right)  =z$ in $\overline{X}$. Apply the subclaim to
find a neighborhood $\overline{V}$ of $z$ in $\overline{X}$ such that
$d_{X}\left(  V,X-U\right)  >K$. For any $x\in V$, $J\left(  x\times\left[
0,1\right]  \right)  $ intersects $V$ and has diameter $\leq K$, so $J\left(
V\times\left[  0,1\right]  \right)  \subseteq U$. It follows that
$\overline{J}\left(  \overline{V}\times\left[  0,1\right]  \right)
\subseteq\overline{U}$, so $\overline{J}$ is continuous at $\left(
x,t\right)  $ and Claim 1 is complete.\medskip

\noindent\textbf{Claim 2.}\emph{ }$\overline{X}$\emph{ is a controlled
}$\mathcal{Z}$\emph{-com\-pact\-i\-fi\-ca\-tion of }$\left(  X,d_{X}\right)
$\emph{.}\medskip

Fix $R>0$ and let $\mathcal{U}$ be a cover of $\overline{X}$ by basic open
subsets. In particular, the elements of $\mathcal{U}$ that intersect $Z$ are
of the form $\overline{f}^{-1}\left(  \overline{W}\right)  $ where
$\overline{W}$ is an open subset of $\overline{Y}$ intersecting $Z$. Let
\[
\mathcal{W}=\{\overline{W}\subseteq\overline{Y}\mid\overline{f}^{-1}%
(\overline{W})\in\mathcal{U}\text{ and }\overline{W}\cap Z\neq\varnothing\}
\]
Since $\mathcal{W}$ covers $Z$, $C\equiv\overline{Y}-\cup\mathcal{W}$ is a
compact subset of $Y$. Let $D$ be a compact subset of $Y$ such that
$C\subseteq\operatorname*{int}D$. Then $\mathcal{W}^{\ast}=\mathcal{W\cup
}\left\{  \operatorname*{int}D\right\}  $ is an open cover of $\overline{Y}$.
Choose $S>0$ so that, if $A\subseteq X$ and $\operatorname*{diam}_{d_{X}%
}\left(  A\right)  \leq R$, then $\operatorname*{diam}_{d_{Y}}\left(
f(A)\right)  \leq S$. Then choose compact $E\subseteq Y$ such that $E\supseteq
D$ and subsets of $Y-E$ of $d_{Y}$-diameter $<S$ lie in an element of
$\mathcal{W}^{\ast}$. Then $f^{-1}\left(  E\right)  \subseteq X$ is compact
and if $A\subseteq X-f^{-1}\left(  E\right)  $ with $\operatorname*{diam}%
_{d_{X}}\left(  A\right)  <R$, then $f\left(  A\right)  $ lies in an element
of $\mathcal{W}^{\ast}$ which is clearly not $\operatorname*{int}D$. So
$f\left(  A\right)  \subseteq\overline{W}$ for some $\overline{W}%
\in\mathcal{W}$, and therefore $A\subseteq\overline{f}^{-1}(\overline{W}%
)\in\mathcal{U}$.
\end{proof}

\begin{corollary}
[Generalized $\mathcal{Z}$-boundary Swapping Theorem]If quasi-isometric groups
$G$ and $H$ act geometrically on proper metric ARs $X$ and $Y$, resp., and $Y$
can be compactified to a $\mathcal{Z}$-structure $\left(  \overline
{Y},Z\right)  $ for $H$, then $X$ can be compactified by addition of the same
boundary to a $\mathcal{Z}$-structure $\left(  \overline{X},Z\right)  $ for
$G$.
\end{corollary}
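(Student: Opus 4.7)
The corollary is essentially a packaging of Theorem \ref{Theorem: general boundary swapping theorem} into the group-theoretic setting, so my plan is to verify each hypothesis of that theorem and then invoke it, followed by the converse half of Lemma \ref{Lemma: Z-structures are controlled Z-compactifications} to upgrade the output from a controlled $\mathcal{Z}$-compactification to a $\mathcal{Z}$-structure for $G$.

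First I will produce the coarse equivalence $X\to Y$ required by Theorem \ref{Theorem: general boundary swapping theorem}. By Proposition \ref{Proposition: Generalized Svarc-Milnor}, the orbit maps $G\to X$ and $H\to Y$ (from word metrics on the groups) are coarse equivalences; since coarse equivalences compose and quasi-isometries are coarse equivalences, the hypothesis that $G$ and $H$ are quasi-isometric yields a coarse equivalence from $X$ to $Y$. Next I verify the remaining hypotheses of Theorem \ref{Theorem: general boundary swapping theorem}: both $X$ and $Y$ are proper metric ARs, hence contractible (Corollary \ref{Corollary: AR iff contractible ANR}), and each supports a geometric group action, so Lemma \ref{Lemma: G-action implies unif. contr. and finite mdim} gives that $X$ and $Y$ are uniformly contractible and of finite macroscopic dimension. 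Finally, the assumption that $(\overline{Y},Z)$ is a $\mathcal{Z}$-structure for $H$ implies, by the forward direction of Lemma \ref{Lemma: Z-structures are controlled Z-compactifications}, that $\overline{Y}=Y\cup Z$ is a controlled $\mathcal{Z}$-compactification.

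With all hypotheses in place, Theorem \ref{Theorem: general boundary swapping theorem} produces a controlled $\mathcal{Z}$-compactification $\overline{X}=X\cup Z$ of $X$ (with the same boundary $Z$). To conclude, I apply the converse direction of Lemma \ref{Lemma: Z-structures are controlled Z-compactifications}: the group $G$ acts geometrically by hypothesis on the proper metric AR $X$, which now admits a controlled $\mathcal{Z}$-compactification by the same boundary $Z$, so $(\overline{X},Z)$ is a $\mathcal{Z}$-structure for $G$. There is no serious obstacle here beyond careful bookkeeping; the real work has already been discharged in Theorem \ref{Theorem: general boundary swapping theorem} (the construction of the topology on $X\sqcup Z$ and verification of the $\mathcal{Z}$-set and control conditions), and the Generalized \v{S}varc-Milnor Proposition together with Lemma \ref{Lemma: G-action implies unif. contr. and finite mdim} supplies exactly the uniform-contractibility and finite-macroscopic-dimension input that theorem demands.
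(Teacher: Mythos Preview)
Your proof is correct and follows essentially the same route as the paper: apply Lemma \ref{Lemma: G-action implies unif. contr. and finite mdim} for uniform contractibility and finite macroscopic dimension, Proposition \ref{Proposition: Generalized Svarc-Milnor} (together with the quasi-isometry hypothesis) for the coarse equivalence, the forward direction of Lemma \ref{Lemma: Z-structures are controlled Z-compactifications} to recognize $\overline{Y}$ as a controlled $\mathcal{Z}$-compactification, Theorem \ref{Theorem: general boundary swapping theorem} to transport the boundary, and the converse direction of Lemma \ref{Lemma: Z-structures are controlled Z-compactifications} to conclude. Your only additions are making explicit the contractibility of $X$ and $Y$ (via Corollary \ref{Corollary: AR iff contractible ANR}) and the chain of coarse equivalences $X\to G\to H\to Y$, which the paper leaves implicit.
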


\begin{proof}
By Lemma \ref{Lemma: G-action implies unif. contr. and finite mdim} both $X$
and $Y$ are uniformly contractible with finite macroscopic dimension, and by
Proposition \ref{Proposition: Generalized Svarc-Milnor} they are coarsely
equivalent. Since $\overline{Y}$ is a controlled $\mathcal{Z}$%
-com\-pact\-i\-fi\-ca\-tion of $Y$ (Lemma
\ref{Lemma: Z-structures are controlled Z-compactifications}), Theorem
\ref{Theorem: general boundary swapping theorem} provides a corresponding
controlled $\mathcal{Z}$-com\-pact\-i\-fi\-ca\-tion of $X$. Another
application of Lemma
\ref{Lemma: Z-structures are controlled Z-compactifications} assures the
desired $\mathcal{Z}$-structure on $G$.
\end{proof}

The non-equivariant version of standard boundary swapping is now immediate.

\begin{corollary}
[$\mathcal{Z}$-boundary Swapping Theorem]%
\label{Corollary: Non-equivariant boundary swapping}If $G$ acts geometrically
on proper metric ARs $X$ and $Y$, and $Y$ can be compactified to a
$\mathcal{Z}$-structure $\left(  \overline{Y},Z\right)  $ for $G$, then $X$
can be compactified by addition of the same boundary to another $\mathcal{Z}%
$-structure $\left(  \overline{X},Z\right)  $ for $G$.
\end{corollary}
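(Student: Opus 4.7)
The plan is to deduce this as an immediate specialization of the Generalized $\mathcal{Z}$-boundary Swapping Theorem just established. Specifically, I would take $H = G$ with the quasi-isometry $G \to H$ being the identity map (which is trivially a quasi-isometry). Under this choice, all hypotheses of the generalized version are satisfied verbatim: $G$ and $H = G$ are quasi-isometric, they act geometrically on the proper metric ARs $X$ and $Y$ respectively, and $Y$ carries a $\mathcal{Z}$-structure $(\overline{Y}, Z)$ for $H = G$.

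Applying the generalized theorem then produces a compactification $\overline{X} = X \cup Z$ which is a $\mathcal{Z}$-structure for $G$ with the same boundary $Z$. No further argument is needed, so there is no real obstacle: the work has all been done in Theorem~\ref{Theorem: general boundary swapping theorem} and in the derivation of the generalized corollary, which in turn relied on Proposition~\ref{Proposition: Generalized Svarc-Milnor} (to obtain coarse equivalence of $X$ and $Y$ from the two geometric actions of $G$), Lemma~\ref{Lemma: G-action implies unif. contr. and finite mdim} (to get uniform contractibility and finite macroscopic dimension), and Lemma~\ref{Lemma: Z-structures are controlled Z-compactifications} (to translate between $\mathcal{Z}$-structures and controlled $\mathcal{Z}$-compactifications).

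Accordingly, the proof I would write is a single sentence invoking the previous corollary with $H = G$. If desired, I could note explicitly that one does not even need the full generalized result here: when $G = H$, coarse equivalence of $X$ and $Y$ can be seen directly by choosing a basepoint $x_{0} \in X$ and $y_{0} \in Y$ and using the $G$-equivariant maps $gx_{0} \mapsto gy_{0}$ and $gy_{0} \mapsto gx_{0}$ together with Proposition~\ref{Proposition: Generalized Svarc-Milnor}, but routing through the generalized corollary is cleaner and avoids repeating the argument.
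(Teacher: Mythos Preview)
Your proposal is correct and matches the paper's approach exactly: the paper simply states that this corollary is ``now immediate'' from the Generalized $\mathcal{Z}$-boundary Swapping Theorem, which is precisely the specialization $H=G$ you describe.
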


The $E\mathcal{Z}$-version of Corollary
\ref{Corollary: Non-equivariant boundary swapping} requires some additional
work. We already have a $\mathcal{Z}$-com\-pact\-i\-fi\-ca\-tion $\overline
{X}=X\sqcup Z$; moreover, $G$ acts on $X$ and $Z$, individually (the latter by
restricting the action on $\overline{Y}$). The idea is to combine these into a
single $G$-action on $\overline{X}$. For each $\gamma\in G$, if $\gamma
_{X}:X\rightarrow X$ and $\gamma_{Z}:Z\rightarrow Z$ are the corresponding
homeomorphisms under the sub-actions, we need $\gamma_{X}\cup\gamma
_{Z}:\overline{X}\rightarrow\overline{X}$ to be a homeomorphism. For that, it
is enough to verify continuity at points of $Z$.

A variation on the continuity argument for $\overline{g}$, used in Theorem
\ref{Theorem: general boundary swapping theorem}, proves:

\begin{lemma}
\label{Lemma: boundedly close maps extend}Let $\overline{X}=X\sqcup Z$ be a
controlled $\mathcal{Z}$-com\-pact\-i\-fi\-ca\-tion of a proper metric space
$X$ and suppose $f,f^{\prime}:X\rightarrow X$ are boundedly close continuous
functions. If $\overline{f}:\overline{X}\rightarrow\overline{X}$ is a
continuous extension of $f$ that takes $Z$ into $Z$, then $\overline
{f^{\prime}}=f^{\prime}\cup\left.  \overline{f}\right\vert _{Z}$ is a
continuous extension of $f^{\prime}$.
\end{lemma}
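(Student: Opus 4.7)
The plan is to adapt the continuity argument used for $\overline{g}$ in the proof of Theorem~\ref{Theorem: general boundary swapping theorem}. Since $f^{\prime}$ is continuous on $X$ and $X$ is open in $\overline{X}$, the map $\overline{f^{\prime}}$ is automatically continuous at points of $X$; the only thing to verify is continuity at an arbitrary $z\in Z$. Let $K>0$ be a bound on $d_{X}(f(x),f^{\prime}(x))$ for all $x\in X$, and let $\overline{W}$ be an open neighborhood in $\overline{X}$ of the point $\overline{f^{\prime}}(z)=\overline{f}(z)\in Z$.

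The key step is to apply Lemma~\ref{Lemma: local property of controlled Z-compactifications} to the controlled $\mathcal{Z}$-com\-pact\-i\-fi\-ca\-tion $\overline{X}$ at the point $\overline{f}(z)$, with neighborhood $\overline{W}$ and constant $R=K+1$. This produces a smaller open neighborhood $\overline{W^{\prime}}$ of $\overline{f}(z)$ so that $d_{X}(W^{\prime},X-W)\geq K+1$. Then, using continuity of $\overline{f}$ at $z$, choose an open neighborhood $\overline{V}$ of $z$ in $\overline{X}$ with $\overline{f}(\overline{V})\subseteq\overline{W^{\prime}}$.

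I claim $\overline{f^{\prime}}(\overline{V})\subseteq\overline{W}$, which completes the argument. For a boundary point $z^{\prime}\in\overline{V}\cap Z$, we have $\overline{f^{\prime}}(z^{\prime})=\overline{f}(z^{\prime})\in\overline{W^{\prime}}\subseteq\overline{W}$ by construction. For a point $x\in V\cap X$, we have $f(x)\in W^{\prime}$ and $d_{X}(f(x),f^{\prime}(x))\leq K$; since $d_{X}(W^{\prime},X-W)\geq K+1>K$, this forces $f^{\prime}(x)\in W\subseteq\overline{W}$. Thus $\overline{f^{\prime}}(\overline{V})\subseteq\overline{W}$, establishing continuity of $\overline{f^{\prime}}$ at $z$.

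The only real subtlety—and the step that the lemma statement explicitly flags by referring to the earlier argument—is recognizing that the control condition provides a metric buffer of size $K$ around any interior point of the chosen boundary neighborhood, so that a bounded perturbation of a map landing in $\overline{W^{\prime}}$ still lands in $\overline{W}$. Everything else (openness of $\overline{V}$, the use of continuity of $\overline{f}$, the fact that $\overline{f}$ is assumed to carry $Z$ into $Z$ so that $\overline{f^{\prime}}$ is well defined) is formal. No new ingredients beyond Lemma~\ref{Lemma: local property of controlled Z-compactifications} and the hypotheses are needed.
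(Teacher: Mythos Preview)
Your argument is correct and is precisely the ``variation on the continuity argument for $\overline{g}$'' that the paper invokes in lieu of a written proof: apply Lemma~\ref{Lemma: local property of controlled Z-compactifications} to obtain a $K$-buffer neighborhood $\overline{W^{\prime}}\subseteq\overline{W}$ of $\overline{f}(z)$, pull it back via the continuity of $\overline{f}$, and use bounded closeness to push $f^{\prime}$-images into $W$. Nothing is missing.
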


In Theorem \ref{Theorem: general boundary swapping theorem}, we began with a
continuous coarse equivalence $f:X\rightarrow Y$ and a continuous a coarse
inverse $g:Y\rightarrow X$. Those were extended to continuous functions
$\overline{f}:\overline{X}\rightarrow\overline{Y}$ and $\overline{g}%
:\overline{Y}\rightarrow\overline{X}$, both of which are the identity on $Z$.
If, in addition, the $G$-action on $Y$ extends to $\overline{Y}$, then for
every $\gamma\in G$, $\overline{g}\gamma\overline{f}:\overline{X}%
\rightarrow\overline{X}$ is a continuous extension of $g\gamma f$ which agrees
with $\gamma$ on $Z$. By Lemma \ref{Lemma: boundedly close maps extend}, if
$g\gamma f$ is boundedly close to $\gamma$ on $X$, we can extend $\gamma$ to
$\overline{X}$ using $\left.  \gamma\right\vert _{Z}$. So $E\mathcal{Z}%
$-boundary swapping is completed by the following proposition.

\begin{proposition}
Suppose proper metric ARs $X$ and $Y$ each admit geometric $G$-actions. Then
there exists a continuous coarse equivalence $f:X\rightarrow Y$, a continuous
coarse inverse $h:Y\rightarrow X$, and a constant $K>0$ such that, for every
$\gamma\in G$, $h\gamma f$ is $K$-close to $\gamma$.
\end{proposition}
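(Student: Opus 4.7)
The plan is to first construct an approximately equivariant pair of coarse maps by brute choice, then pass to continuous approximations via Proposition~\ref{Prop: continuous approximation}, taking care that every estimate is uniform in $\gamma\in G$. Fix basepoints $x_{0}\in X$ and $y_{0}\in Y$ and a $T>0$ so large that $GB(x_{0},T)=X$ and $GB(y_{0},T)=Y$. For each $x\in X$ select (without any continuity) an element $s_{X}(x)\in G$ with $d_{X}(s_{X}(x)x_{0},x)\le T$, and similarly select $s_{Y}\colon Y\to G$. Define
\[
\tilde{f}\colon X\to Y,\quad \tilde{f}(x)=s_{X}(x)\,y_{0},\qquad \tilde{h}\colon Y\to X,\quad \tilde{h}(y)=s_{Y}(y)\,x_{0}.
\]

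The key observation, exploited throughout, is that for each $R>0$ the sets $\{g\in G:d_{X}(gx_{0},x_{0})\le R\}$ and $\{g\in G:d_{Y}(gy_{0},y_{0})\le R\}$ are finite (properness of the actions). Three routine verifications then follow: (i) $\tilde{f},\tilde{h}$ are metrically proper and large-scale uniform, hence coarse, with all moduli expressible as maxima over the above finite sets; (ii) there is a constant $K_{0}$ with $d_{Y}(\tilde{f}(\gamma x),\gamma\tilde{f}(x))\le K_{0}$ and $d_{X}(\tilde{h}(\gamma y),\gamma\tilde{h}(y))\le K_{0}$ for every $\gamma\in G$; and (iii) $\tilde{h}\gamma\tilde{f}(x)$ is uniformly $K_{1}$-close to $\gamma x$. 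The central algebraic computation underlying (ii) and (iii) is that $s_{X}(\gamma x)^{-1}\gamma s_{X}(x)$ moves $x_{0}$ a distance at most $2T$ (since $\gamma$ is an isometry, both $s_{X}(\gamma x)x_{0}$ and $\gamma s_{X}(x)x_{0}$ lie in $B(\gamma x,T)$), and hence lies in a fixed finite subset of $G$; thus $\gamma\tilde{f}(x)=s_{X}(\gamma x)\sigma y_{0}$ for some $\sigma$ in that finite set, giving (ii). An analogous calculation inside $Y$ yields $\tilde{h}\gamma\tilde{f}(x)=\gamma s_{X}(x)\tau^{-1}x_{0}$ with $\tau$ in a fixed finite set, which combined with $d_{X}(\gamma x,\gamma s_{X}(x)x_{0})\le T$ gives (iii).

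Since ARs are contractible (Corollary~\ref{Corollary: AR iff contractible ANR}), Lemma~\ref{Lemma: G-action implies unif. contr. and finite mdim} tells us $X$ and $Y$ are uniformly contractible with finite macroscopic dimension. Proposition~\ref{Prop: continuous approximation} then yields continuous maps $f\colon X\to Y$ and $h\colon Y\to X$ that are boundedly close (by constants $M_{f},M_{h}$) to $\tilde{f}$ and $\tilde{h}$ respectively; such $f,h$ are continuous coarse equivalences, and taking $\gamma=e$ in (iii) shows they are coarse inverses.

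Finally, for arbitrary $\gamma\in G$ and $x\in X$ one estimates
\[
d_{X}(h\gamma f(x),\gamma x)\le d_{X}(h\gamma f(x),\tilde{h}\gamma f(x))+d_{X}(\tilde{h}\gamma f(x),\tilde{h}\gamma\tilde{f}(x))+d_{X}(\tilde{h}\gamma\tilde{f}(x),\gamma x).
\]
The first summand is at most $M_{h}$; the second is controlled by large-scale uniformity of $\tilde{h}$ applied to $d_{Y}(\gamma f(x),\gamma\tilde{f}(x))=d_{Y}(f(x),\tilde{f}(x))\le M_{f}$ (here $\gamma$ being an isometry is essential); the third is at most $K_{1}$ by (iii). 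All three bounds are independent of $\gamma$ and $x$, so their sum is the required uniform $K$. The main obstacle is precisely this uniformity-in-$\gamma$: the entire argument hinges on the fact that $\gamma$ acts by isometries, so that every ``finite stabilizer-type'' set appearing in the analysis is defined once and for all from the basepoints rather than drifting with $\gamma$.
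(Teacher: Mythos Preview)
Your argument is correct and follows essentially the same route as the paper: define the coarse maps via orbit maps composed with (discontinuous) sections $x\mapsto s_X(x)$, $y\mapsto s_Y(y)$, verify approximate $G$-equivariance by observing that the relevant ``error'' elements $s_X(\gamma x)^{-1}\gamma s_X(x)$ and $s_Y(\gamma s_X(x)y_0)^{-1}\gamma s_X(x)$ lie in fixed finite subsets of $G$, and then invoke Proposition~\ref{Prop: continuous approximation} for continuity. Two minor differences are worth noting: the paper routes through a word metric on $G$ and appeals to the uniform bound on stabilizer diameters, whereas you work directly in $X$ and $Y$ using properness of the actions to produce the needed finite sets; and you are more explicit than the paper about separating the discontinuous maps $\tilde f,\tilde h$ from their continuous approximations $f,h$ and tracking the resulting error in the final triangle-inequality estimate (the paper's ``we may further assume they are continuous'' glosses over exactly this point).
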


\begin{proof}
Fix points $x_{0}\in X$ and $y_{0}\in Y$. Let $\alpha:G\rightarrow X$ be
defined as $\alpha(g)=gx_{0}$ and $\beta:G\rightarrow Y$ be given by
$\beta(g)=gy_{0}$. Since $G$ acts geometrically on $X$ and $Y$, Proposition
4.5 guarantees, when $G$ is endowed with a word metric, that $\alpha$ and
$\beta$ are coarse equivalences. Define a coarse inverse $\alpha^{\prime
}:X\rightarrow G$ as follows: for each $gx_{0}\in Gx_{0}$, let $\alpha
^{\prime}(gx_{0})=g^{\prime}$ where $g^{\prime}\in G$ is such that
$gx_{0}=g^{\prime}x_{0}$. Having defined $\alpha^{\prime}$ on the orbit of
$x_{0}$, we may now define $\alpha^{\prime}$ on all of $X$. Since $G$ acts
cocompactly on $X$, there exists an $R>0$ such that $X=GB(x_{0},R)$. Then, for
$x\in X$, choose a $g\in G$ such that $x\in gB(x_{0},R)$ and let
$\alpha^{\prime}(x)=\alpha^{\prime}(gx_{0})$. Define a coarse inverse
$\beta^{\prime}:Y\rightarrow G$ in a similar fashion.

Now, let $f=\beta\alpha^{\prime}$ and $h=\alpha\beta^{\prime}$. As these are
compositions of coarse equivalences, both $f$ and $h$ are coarse equivalences.
We may further assume they are continuous from Corollary 5.4. We will show
these are the desired coarse equivalences.

Since $G$ acts geometrically on $X$ and $Y$, stabilizers of points have
uniformly bounded diameters (e.g., \cite[I.8.5]{BrHa99}). Choose $M>0$ such
that $\hbox{diam}_{G}G_{z}<M$ for all $z\in X\cup Y$. Then choose $L>0$ so
that whenever $d_{G}(g_{1},g_{2})<M$, $d_{X}(\alpha(g_{1}),\alpha
(g_{2}))=d_{X}(g_{1}x_{0},g_{2}x_{0})<L$, and set $K=R+2L$. We will show that
$h\gamma f$ is $K$-close to $\gamma$ for all $\gamma\in G$.

Let $x\in X$ and $\gamma\in G$. Choose $g\in G$ so that $x\in gB(x_{0},R)$ and
note that $d_{X}(\gamma x,\gamma gx_{0})<R$. Now, consider, $h\gamma
f(x)=\alpha\beta^{\prime}\gamma\beta\alpha^{\prime}(x)$. By definition,
$\alpha^{\prime}(x)=\alpha^{\prime}(gx_{0})=g_{1}$ for some $g_{1}\in G$ with
$g_{1}x_{0}=gx_{0}$. Notice that since $g^{-1}g_{1}\in G_{x_{0}}$, then
$d_{G}(g,g_{1})<M$ and thus $d_{X}(gx_{0},g_{1}x_{0})<L$. Now, $\beta^{\prime
}\gamma\beta\alpha^{\prime}(x)=\beta^{\prime}\gamma\beta(g_{1})=\beta^{\prime
}\gamma(g_{1}y_{0})=\beta^{\prime}(\gamma g_{1}y_{0})=g_{2}$ where $g_{2}\in
G$ is such that $\gamma g_{1}y_{0}=g_{2}y_{0}$. Since $g_{2}^{-1}g_{1}\in
G_{y_{0}}$, $d_{G}(\gamma g_{1},g_{2})<M$ and thus $d_{X}(\gamma g_{1}%
x_{0},g_{2}x_{0})<L$.

Thus, we have $h\gamma f(x)=\alpha(g_{2})=g_{2}x_{0}$ and we observe:
\[
d_{X}(\gamma x,h\gamma f(x))=d_{X}(\gamma x,g_{2}x_{0})
\]%
\[
\leq d_{X}(\gamma x,\gamma gx_{0})+d_{X}(\gamma gx_{0},\gamma g_{1}%
x_{0})+d_{X}(\gamma g_{1}x_{0},g_{2}x_{0})<R+L+L=K.
\]

\end{proof}

\section{Shape equivalence of $\mathcal{Z}$-boundaries}

We now prove Theorem \ref{Theorem: shape of bdry a quasi-isom invariant} and
Corollary \ref{Corollary: Shape of bdry a group invariant}. For readers
familiar with shape theory, these are almost immediate consequences of the
Boundary Swapping Theorems. That connection and the remaining technical
details were worked out in \cite[\S 3.7]{Gui16}; but there the conclusions
were less general since a full-blown boundary swapping theorem was not yet
known. Here we complete the picture. We will begin with a brief review of
shape theory and then outline the main argument. Additional details can be
found in \cite[\S 3.7]{Gui16}. For comprehensive treatments of shape theory,
see \cite{Bor71}, \cite{DySe78} or \cite{MaSe82}.\medskip

To define the \emph{shape} of a compact metric space $A$, one first associates
to $A$ an inverse sequence of finite polyhedra and continuous maps
\[
K_{0}\overset{f_{1}}{\longleftarrow}K_{1}\overset{f_{2}}{\longleftarrow}%
K_{2}\overset{f_{3}}{\longleftarrow}\cdots
\]
Such a sequence can be obtained in a variety of ways. For example: the $K_{i}$
can be chosen to be nerves of progressively finer finite open covers of $A$;
or, if $A$ can be embedded in $%
\mathbb{R}
^{n}$, the $K_{i}$ can be progressively smaller polyhedral neighborhoods of
$A$ connected by inclusion maps. Clearly, these sequences are not uniquely
determined by $A$. A pair of inverse sequences of finite polyhedra $\left\{
K_{i},f_{i}\right\}  $ and $\left\{  L_{i},g_{i}\right\}  $ are
\emph{pro-homotopy equivalent }if they contain subsequences that fit into a
ladder diagram%
\[
\begin{diagram} K_{i_{0}} & & \lTo^{f_{i_{0},i_{1}}} & & K_{i_{1}} & & \lTo^{f_{i_{1},i_{2}}} & & K_{i_{2}} & & \lTo^{f_{i_{2},i_{3}}}& & K_{i_{3}}& \cdots\\ & \luTo & & \ldTo & & \luTo & & \ldTo & & \luTo & & \ldTo &\\ & & L_{j_{0}} & & \lTo^{g_{j_{0},j_{1}}} & & L_{j_{1}} & & \lTo^{g_{j_{1},j_{2}}}& & L_{j_{2}} & & \lTo^{g_{j_{2},j_{3}}} & & \cdots \end{diagram}
\]
in which each triangle of maps homotopy commutes. (Doubly subscripted maps are
compositions.) It can be shown that any two inverse sequences associated with
$A$ are pro-homotopy equivalent. The pro-homotopy class of these sequences
determines the \emph{shape} of $A$, i.e., compact metric spaces $A$ and
$A^{\prime}$ are defined to be \emph{shape equivalent} if their associated
inverse sequences are pro-homotopy equivalent.

If $A$ is a compact subset of an ANR $X$, one can always choose a nested
sequence of compact neighborhoods%
\begin{equation}
L_{0}\hookleftarrow L_{1}\hookleftarrow L_{2}\hookleftarrow\cdots
\label{Sequence: neighborhood system}%
\end{equation}
with $\cap L_{i}=A$. If, in addition, the $L_{i}$ themselves can be chosen to
be ANRs (as is the case for the most commonly studied $X$), Theorem
\ref{Theorem: West's Theorem} allows us to use sequence
(\ref{Theorem: Boundary Swapping Theorem}) to represent the shape of $A$. From
there, a proof of Theorem \ref{Theorem: q-i groups act on phe spaces} is
rather easy. For the general case, we will use just a bit of the theory of
Hilbert cube manifolds (in particular Theorem \ref{Theorem: Edwards' Theorem})
along with the boundary swapping techniques developed here, to replace $X$
with a space for which that easy strategy can be applied.

\begin{remark}
For those who prefer to avoid Hilbert cube manifolds, a more general (but
slightly more technical) approach to shape theory can be used to circumvent
their need. That approach, developed in \cite{MaSe82}, allows the use of
infinite CW complexes (or noncompact ANRs) in associated inverse sequences
representing a compactum $A$. That means an allowable sequence of type
(\ref{Sequence: neighborhood system}) is immediately available---just choose
the $L_{i}$ to be \emph{open} neighborhoods of $A$. (This approach was used by
Ontaneda for similar purposes in \cite{Ont05}.) Whichever approach one uses,
boundary swapping helps to make the final conclusion straightforward, as the
following argument shows.
\end{remark}

\begin{proof}
[Proof of Theorem \ref{Theorem: shape of bdry a quasi-isom invariant}]Suppose
quasi-isometric groups $G$ and $H$ admit $\mathcal{Z}$-structures with
boundaries $Z$ and $Z^{\prime}$, respectively. Then, by Theorem
\ref{Theorem: general boundary swapping theorem}, there exists a proper metric
AR $X$ and a pair of $\mathcal{Z}$-com\-pact\-i\-fi\-ca\-tions $\overline
{X}=X\cup Z$ and $\overline{X}^{\prime}=X\cup Z$. Assume for the moment that
there is a sequence
\[
\overline{N}_{0}\hookleftarrow\overline{N}_{1}\hookleftarrow\overline{N}%
_{2}\hookleftarrow\cdots
\]
of compact ANR neighborhoods of $Z$ in $\overline{X}$ with $\cap\overline
{N}_{i}=Z$. By Corollary \ref{Corollary: open subsets of ANRs are ANRs} and
Lemma \ref{Lemma: Z-compactifications are ANRs}, this is equivalent to the
existence of a cofinal sequence
\[
N_{0}\hookleftarrow N_{1}\hookleftarrow N_{2}\hookleftarrow\cdots
\]
of closed ANR neighborhoods of infinity in $X$, where by Lemma
\ref{Lemma: Z-sets in subspaces}, each $\overline{N}_{i}$ is a $\mathcal{Z}%
$-com\-pact\-i\-fi\-ca\-tion of $N_{i}$. Since each $N_{i}\hookrightarrow
\overline{N}_{i}$ is a homotopy equivalence, choose homotopy inverses
$g_{i}:\overline{N}_{i}\rightarrow N_{i}$.

By the boundary swap, each $N_{i}$ has an alternative $\mathcal{Z}%
$-com\-pact\-i\-fi\-ca\-tion $\overline{N}_{i}^{\prime}=N_{i}\cup Z^{\prime
}\subseteq\overline{X}^{\prime}$, giving rise to a sequence
\[
\overline{N}_{0}^{\prime}\hookleftarrow\overline{N}_{1}^{\prime}%
\hookleftarrow\overline{N}_{2}^{\prime}\hookleftarrow\cdots
\]
of compact ANR neighborhoods of $Z^{\prime}$ with $\cap\overline{N}%
_{i}^{\prime}=Z^{\prime}$. For each $i$, let $g_{i}^{\prime}:\overline{N}%
_{i}^{\prime}\rightarrow N_{i}$ be a homotopy inverse for $N_{i}%
\hookrightarrow\overline{N}_{i}^{\prime}$.

Begin with trivial ladder diagram%
\[
\begin{diagram} N_{0} & & \lInto & & N_{2} & & \lInto & & N_{4} & & \lInto & & \cdots \\ & \luInto & & \ldInto & & \luInto & & \ldInto & & \luInto \\ & & N_{1} & & \lInto & & N_{3} & & \lInto & & N_{5} & & \cdots \end{diagram}
\]
then compactify the top row by adding copies of $Z$ and the bottom by adding
copies of $Z^{\prime}$. Use maps $\operatorname*{incl}\circ g_{i}^{\prime}$
and $\operatorname*{incl}\circ g_{i}$ for up and down arrows to obtain a
homotopy commuting diagram%
\[
\begin{diagram} \overline{N}_{0} & & \lInto & & \overline{N}_{i} & & \lInto & & \overline{N}_{i} & & \lInto & & \cdots \\ & \luTo & & \ldTo & & \luTo & & \ldTo & & \luTo \\ & & \overline{N}_{1}^{\prime} & & \lInto & & \overline{N}_{3}^{\prime} & & \lInto & & \overline{N}_{5}^{\prime} & & \cdots \end{diagram}
\]
which proves that $Z$ and $Z^{\prime}$ are shape equivalent.

To complete the proof, we must address the situation where $Z$ does not have
arbitrarily small compact ANR neighborhoods in $\overline{X}$ (equivalently,
$X$ does not contain arbitrarily small closed ANR neighborhoods of infinity).
In that case, we will replace $X$ with the AR $X\times\left[  0,1\right]
^{\infty}$. By Theorem \ref{Theorem: Generalized Boundary Swapping} or
Corollary \ref{Corollary: Non-equivariant boundary swapping}, we can
$\mathcal{Z}$-compactify $X\times\left[  0,1\right]  ^{\infty}$ with either
$Z$ or $Z^{\prime}$, and by Theorem \ref{Theorem: Edwards' Theorem},
$X\times\left[  0,1\right]  ^{\infty}$ is a Hilbert cube manifold. By standard
Hilbert cube manifold topology \cite{Cha76}, $X\times\left[  0,1\right]
^{\infty}$ contains arbitrarily small closed Hilbert cube manifold
neighborhoods of infinity. Since these are ANRs, the general case follows from
the earlier special case.
\end{proof}

\bibliographystyle{amsalpha}
\bibliography{Biblio}

\providecommand{\bysame}{\leavevmode\hbox to3em{\hrulefill}\thinspace}
\providecommand{\MR}{\relax\ifhmode\unskip\space\fi MR }
\providecommand{\MRhref}[2]{%
  \href{http://www.ams.org/mathscinet-getitem?mr=#1}{#2}
}
\providecommand{\href}[2]{#2}
\begin{thebibliography}{AMN11}

\bibitem[AMN11]{AMN11}
H.~Abels, A.~Manoussos, and G.~Noskov, \emph{Proper actions and proper
  invariant metrics}, J. Lond. Math. Soc. (2) \textbf{83} (2011), no.~3,
  619--636. \MR{2802502}

\bibitem[BDM07]{BDM07}
N.~Brodskiy, J.~Dydak, and A.~Mitra, \emph{\v svarc-{M}ilnor lemma: a proof by
  definition}, Topology Proc. \textbf{31} (2007), no.~1, 31--36. \MR{2363149}

\bibitem[Bes96]{Bes96}
Mladen Bestvina, \emph{Local homology properties of boundaries of groups},
  Michigan Math. J. \textbf{43} (1996), no.~1, 123--139. \MR{1381603}

\bibitem[BH99]{BrHa99}
M.~Bridson and A.~Haefliger, \emph{Metric spaces of non-positive curvature},
  Grundlehren der Mathematischen Wissenschaften [Fundamental Principles of
  Mathematical Sciences], vol. 319, Springer-Verlag, Berlin, 1999.

\bibitem[Bin52]{Bin52}
R.~H. Bing, \emph{Partitioning continuous curves}, Bull. Amer. Math. Soc.
  \textbf{58} (1952), 536--556. \MR{0049550}

\bibitem[BM91]{BeMe91}
Mladen Bestvina and Geoffrey Mess, \emph{The boundary of negatively curved
  groups}, J. Amer. Math. Soc. \textbf{4} (1991), no.~3, 469--481. \MR{1096169}

\bibitem[Bor71]{Bor71}
K.~Borsuk, \emph{Theory of shape}, Matematisk Institut, Aarhus Universitet,
  Aarhus, 1971, Lecture Notes Series, No. 28. \MR{0293602}

\bibitem[Cha76]{Cha76}
T.~A. Chapman, \emph{Lectures on {H}ilbert cube manifolds}, American
  Mathematical Society, Providence, R. I., 1976, Expository lectures from the
  CBMS Regional Conference held at Guilford College, October 11-15, 1975,
  Regional Conference Series in Mathematics, No. 28. \MR{0423357}

\bibitem[CK00]{CrKl00}
C.B. Croke and B.~Kleiner, \emph{Spaces with nonpositive curvature and their
  ideal boundaries}, Topology \textbf{39} (2000), 549--556.

\bibitem[Dah03]{Dah03}
F.~Dahmani, \emph{Classifying spaces and boundaries for relatively hyperbolic
  groups}, Proc. London Math Soc. \textbf{86} (2003), no.~3, 666--684.

\bibitem[Dra06]{Dra06}
A.~Dranishnikov, \emph{On {B}estvina-{M}ess formula}, Contemporary Mathematics
  \textbf{394} (2006), no.~1, 77–--85.

\bibitem[DS78]{DySe78}
Jerzy Dydak and Jack Segal, \emph{Shape theory}, Lecture Notes in Mathematics,
  vol. 688, Springer, Berlin, 1978. \MR{520227}

\bibitem[Dug78]{Dug78}
James Dugundji, \emph{Topology}, Allyn and Bacon, Inc., Boston,
  Mass.-London-Sydney, 1978, Reprinting of the 1966 original, Allyn and Bacon
  Series in Advanced Mathematics. \MR{0478089}

\bibitem[Edw80]{Edw80}
Robert~D. Edwards, \emph{Characterizing infinite-dimensional manifolds
  topologically (after {H}enryk {T}oru\'nczyk)}, S\'eminaire {B}ourbaki
  (1978/79), Lecture Notes in Math., vol. 770, Springer, Berlin-New York, 1980,
  pp.~278--302. \MR{572429}

\bibitem[Fer00]{Fer00}
Steven~C. Ferry, \emph{Stable compactifications of polyhedra}, Michigan Math.
  J. \textbf{47} (2000), no.~2, 287--294. \MR{1793625}

\bibitem[FL05]{FaLa05}
F.~T. Farrell and J.-F. Lafont, \emph{E{Z}-structures and topological
  applications}, Comment. Math. Helv. \textbf{80} (2005), no.~1, 103--121.
  \MR{2130569}

\bibitem[Geo86]{Geo86}
Ross Geoghegan, \emph{The shape of a group---connections between shape theory
  and the homology of groups}, Geometric and algebraic topology, Banach Center
  Publ., vol.~18, PWN, Warsaw, 1986, pp.~271--280. \MR{925870}

\bibitem[Geo08]{Geo08}
R.~Geoghegan, \emph{Topological methods in group theory}, Graduate Texts in
  Mathematics, vol. 243, Springer, New York, 2008. \MR{2365352}

\bibitem[GM16]{GuMo15}
C.~Guilbault and M.~Moran, \emph{A comparison of large scale dimension of a
  metric space to the dimension of its boundary}, Topology and its Applications
  \textbf{199} (2016), 17--22.

\bibitem[GMT]{GMT}
Craig~R. Guilbault, Molly~A. Moran, and Carrie~J. Tirel, \emph{Z-structures for
  {B}aumslag-{S}olitar groups}, in progress.

\bibitem[Gui16]{Gui16}
Craig~R. Guilbault, \emph{Ends, shapes, and boundaries in manifold topology and
  geometric group theory}, Topology and geometric group theory, Springer Proc.
  Math. Stat., vol. 184, Springer, [Cham], 2016, pp.~45--125. \MR{3598162}

\bibitem[Han51]{Han51}
O.~Hanner, \emph{Some theorems on absolute neighborhood retracts}, Ark. Mat. 1
  (1951), 389--408.

\bibitem[Hen75]{Hen75}
David~W. Henderson, \emph{{$Z$}-sets in {ANR}'s}, Trans. Amer. Math. Soc.
  \textbf{213} (1975), 205--216. \MR{0391008}

\bibitem[Hu65]{Hu65}
Sze-tsen Hu, \emph{Theory of retracts}, Wayne State University Press, Detroit,
  1965. \MR{0181977}

\bibitem[Mar14]{Mar14}
A.~Martin, \emph{Non-positively curved complexes of groups and boundaries},
  Geom. Topol. \textbf{18} (2014), no.~1, 31--102.

\bibitem[Mor16]{Mor16a}
M.~Moran, \emph{Finite-dimensionality of {${Z}$}-boundaries}, Groups Geom. Dyn.
  \textbf{10} (2016), no.~2, 819--824. \MR{3513569}

\bibitem[MsS82]{MaSe82}
Sibe Marde\v~si\'c and Jack Segal, \emph{Shape theory}, North-Holland
  Mathematical Library, vol.~26, North-Holland Publishing Co., Amsterdam-New
  York, 1982, The inverse system approach. \MR{676973}

\bibitem[Mun00]{Mun00}
J.R. Munkres, \emph{Topology}, Prentice Hall, Inc., 2000.

\bibitem[NY12]{NoYu12}
Piotr~W. Nowak and Guoliang Yu, \emph{Large scale geometry}, EMS Textbooks in
  Mathematics, European Mathematical Society (EMS), Z\"urich, 2012.
  \MR{2986138}

\bibitem[Ont05]{Ont05}
P.~Ontaneda, \emph{Cocompact {CAT}(0) spaces are almost geodesically complete},
  Topology \textbf{44} (2005), no.~1, 47--62. \MR{2104000}

\bibitem[OP09]{OsPr09}
D.~Osajda and P.~Przytycki, \emph{Boundaries of systolic groups}, Geom. Topol.
  \textbf{13} (2009), no.~5, 2807–--2880.

\bibitem[Tir11]{Tir11}
C.~Tirel, \emph{Z-structures on product groups}, Algebr. Geom. Topol.
  \textbf{11} (2011), no.~5, 2587--2625.

\bibitem[vM89]{vMi89}
J.~van Mill, \emph{Infinite-dimensional topology}, North-Holland Mathematical
  Library, vol.~43, North-Holland Publishing Co., Amsterdam, 1989. \MR{977744}

\bibitem[Wes77]{Wes77}
James~E. West, \emph{Mapping {H}ilbert cube manifolds to {ANR}'s: a solution of
  a conjecture of {B}orsuk}, Ann. of Math. (2) \textbf{106} (1977), no.~1,
  1--18. \MR{0451247}

\end{thebibliography}

\end{document}